\documentclass[10pt, a4paper]{amsart}
\usepackage[british]{babel}
\usepackage{tikz}
\usepackage{mathrsfs}
\usetikzlibrary{arrows}
\usepackage{amsmath}
\usepackage{amssymb}
\usepackage{amsthm}
\usepackage{graphicx}
\usepackage{url}
\usepackage{enumerate}
\usepackage{xcolor}
\usepackage{mathrsfs} 
\usepackage{t1enc}
\usepackage{mathtools}
\usepackage{placeins} 
\usepackage{tikz}
\usepackage{textcomp}
\usepackage{soul}
\usetikzlibrary{matrix}
\usetikzlibrary{arrows,automata}
\usetikzlibrary{decorations.pathreplacing}

\newtheorem{thm}{Theorem}[section]
\newtheorem{lem}[thm]{Lemma}

\newtheorem{cor}[thm]{Corollary}

\newtheorem{defn}[thm]{Definition}
\newtheorem{rem}[thm]{Remark}
\newtheorem{exmp}[thm]{Example}


\DeclareMathOperator{\Int}{int}

\DeclareMathOperator{\orb}{Orb}


\DeclareMathOperator{\Orb}{Orb}

\newcommand{\eps}{\varepsilon}

\newcommand{\N}{\mathbb{N}}

\newcommand{\f}{f\colon [0,1]\to[0,2]}

\numberwithin{equation}{section}

\begin{document}

\title[Renormalization in Lorenz maps]{Renormalization in Lorenz maps - completely invariant sets and periodic orbits}

\author{\L{}ukasz Cholewa}
\address[\L{}. Cholewa]{
Krakow University of Economics, Department of Statistics, Rakowicka 27, 31-510 Krak\'ow , Poland
}
\email[\L{}. Cholewa]{lcholewa94@gmail.com}

\author{Piotr Oprocha}
\address[P.~Oprocha]{AGH University of Krakow, Faculty of Applied
	Mathematics, al.
	Mickiewicza 30, 30-059 Krak\'ow, Poland
	-- and --
	Centre of Excellence IT4Innovations - Institute for Research and Applications of Fuzzy Modeling, University of Ostrava, 30. dubna 22, 701 03 Ostrava 1, Czech Republic.}
\email[P.~Oprocha]{oprocha@agh.edu.pl}

\subjclass[2020]{37E05, 37B05, 37E20}
\keywords{Lorenz map, topological transitivity, 
kneading sequence, locally eventually onto, 
expanding map, renormalizable map}

\begin{abstract}
The paper deals with dynamics of expanding Lorenz maps, which appear in a natural way as Poincar\`e maps in geometric models of well-known Lorenz attractor. Using both analytical and symbolic approaches, we study connections between periodic points, completely invariant sets and renormalizations. 
We show that some renormalizations may be connected with completely invariant sets while some others don't. We provide an algorithm to detect the renormalizations that can be recovered from completely invariant sets. Furthermore, we discuss the importance of distinguish one-side and double-side preimage. This way we provide a better insight into the structure of renormalizations in Lorenz maps.
These relations remained unnoticed in the literature, therefore we are correcting some gaps existing in the literature, improving and completing to some extent the description of possible dynamics in this important field of study.

\end{abstract}

\maketitle

\section{Introduction}

The main object of our study is \textit{expanding Lorenz maps}, that is 
maps $f\colon [0,1]\to [0,1]$ satisfying the following three conditions:
\begin{enumerate}
\item there is a \textit{critical point} $c\in (0,1)$ such that $f$ is 
continuous and strictly increasing on $[0,c)$ and $(c,1]$;
\item $\lim_{x\to c^{-}}f(x)=1$ and $\lim_{x\to c^{+}}f(x)=0$;
\item $f$ is differentiable for all points not belonging to a finite 
set $F\subseteq [0,1]$ and $\inf_{x\not\in F} f'(x)>1$;
\end{enumerate}
with special emphasis on piecewise linear case. It turned out that such maps appear in a natural way
as Poincar\`e maps in geometric models of well known Lorenz attractor. This description was independently appeared in works of Guckenheimer \cite{Gu1},
Williams \cite{Wi1} and Afraimovich, Bykov and Shil`nikov \cite{ABS1} as a tool for better understanding of chaotic behavior present in Lorenz model (cf. more recent \cite{SL}).
In 1985, Hofbauer performed in \cite{Hof1} an analysis of possible periods of periodic points in Lorenz maps
 by introducing special type of infinite graphs, induced by evolution of pieces of monotonicity under iteration of Lorenz map (see \cite{Hof2} for further extensions of this approach).
While a characterization similar to Scharkovsky was provided later in \cite{ALMT} by different methods, the tools developed by Hofbauer are of independent interest, since
they provide a deep insight into dynamics of Lorenz maps. An extensive study on utility of Hofbauer's approach, kneading theory of Milnor and Thurston and many other interesting results on dynamics of Lorenz maps can be found in PhD thesis of M. St. Pierre, see \cite{MSP2} (cf. \cite{MSP1}). Even the simplest case of maps $x\mapsto \beta x$ generates many difficulties and leads to deep results. It is usually performed at the level of symbolic dynamics, so-called $\beta$-shifts. In \cite{Schme},  Schmeling showed that many of these maps do not have the specification property, and later Thompson \cite{Tho} provided nice representations of these systems together with their further descriptions (see \cite{CT} for recent advances that originated from this, together with a nice survey on studies on structure of entropy and topological pressure). Mixing in $x\mapsto \beta x+\alpha$
also reveals complicated structure of parameters  \cite{OPR} (cf. \cite{KHK} for measure-theoretic mixing in a similar family of maps). 

Our primary model of study will be Lorenz maps with constant slope and their renormalizations. 
Since renormalization is again a Lorenz map, maps with constant slope can have only finitely many renormalizations. In the literature there are known examples of Lorenz maps, where renormalization process does not end, which has many interesting consequences. A nice example of this type is PhD thesis of Winckler \cite{Winc}.
In fact, good understanding of renormalizations can be beneficial in analysis of dynamics as it is a kind of ``zooming'' revealing periodic structure of the map, see for example \cite{Mar} for unimodal maps. However it is also worth emphasizing, that there is a huge difference between finitely and infinitely renormalizable maps, with the later having less regular dynamical behavior (see \cite{MW} or \cite{MW94}). From the above discussion we see that Lorenz maps with constant slope do not cover the whole variety of possibilities. On the other hand, many Lorenz maps can be reduced to constant slope case by proper rescaling of the domain (cf. \cite{Glen}).

In her PhD thesis \cite{Palmer} Palmer studied connections between renormalizations and special types of periodic points. This approach was extended by Glendening \cite{Glen},
Glendingning and Sparrow \cite{GS} and various other mathematicians.
In this paper we will be mainly focus on approach from \cite{Cui,Ding}, which aim to characterize renormalizations and related properties in terms of completely invariant sets.
One of the main results in \cite{Ding} states the following.
\begin{thm}[{\cite[Thm~A]{Ding}}]\label{thm:A}
	Let $f$ be an expanding Lorenz map. 
	\begin{enumerate}
		\item\label{thm:A:1}
Suppose $E$ is a proper completely invariant closed set of $f$, put
$$
e_- =\sup\{x\in E, x<c\}, 
\qquad e_+ =\inf\{x\in E, x>c\}
$$
and
$$
l = N((e_-, c)),\qquad r = N((c,e_+))
$$
Then
$$
f^l(e_-)=e_-,\qquad f^r(e_+)=e_+
$$
and the following map
\begin{equation}
R_Ef(x)=\begin{cases}
f^l(x),& x\in [f^r(c_+),c)\\
f^r(x),& x\in (c,f^l(c_-)]\\
\end{cases}
\label{eq:RE}
\end{equation}
is a renormalization of $f$ (in particular is a well defined self map of  $[f^r(c_+),f^l(c_-)]$).
 
 \item\label{thm:A:2} If $g$ is a renormalization of $f$, then there exists a unique
proper completely invariant closed set $B$ such that $R_Bf = g$.
\end{enumerate}
\end{thm}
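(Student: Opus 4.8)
The plan is to prove \eqref{thm:A:1} and \eqref{thm:A:2} separately, using throughout that $U:=[0,1]\setminus E$ is open and completely invariant, so $f$ carries $U$ into $U$ and maps each component of $U$ onto a full component of $U$ (a component is split by $f$ only when it contains $c$), together with the expansion hypothesis, which forbids a subinterval from recurring under an orbit along which $f$ is a local homeomorphism. For \eqref{thm:A:1}, write $a=e_-$, $b=e_+$, so $(a,c)\cup(c,b)\subseteq U$ while $a,b\in E$; hence the forward orbits of $a$ and $b$ stay in $E$ and avoid $(a,c)\cup(c,b)$. The first step is to show $f^l(a)=a$. By the definition of $l=N((a,c))$ the maps $f,\dots,f^{l-1}$ restrict to strictly increasing homeomorphisms on $[a,c)$ and $c\in f^l((a,c))=(f^l(a),f^l(c_-))$, so $f^l(a)<c<f^l(c_-)$; since $f^l(a)\in E$ and $E\cap(a,c)=\emptyset$ this forces $f^l(a)\le a$. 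Then $a\in[f^l(a),f^l(c_-))=f^l([a,c))$, so there is a unique $w\in[a,c)$ with $f^l(w)=a$, and $w\in f^{-l}(E)=E$ forces $w=a$, i.e. $f^l(a)=a$; the symmetric argument on $(c,b)$ gives $f^r(b)=b$.

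It remains, for \eqref{thm:A:1}, to see that $R_Ef$ is a well-defined self-map of $K:=[f^r(c_+),f^l(c_-)]$ that is an expanding Lorenz map after affine rescaling. The displays above give $f^r(c_+)<c<f^l(c_-)$, and $f^r((c,b))=(f^r(c_+),b)\subseteq U$ together with $a\in E$, $a<b$, forces $f^r(c_+)\ge a$; symmetrically $f^l(c_-)\le b$, so $K\subseteq[a,b]$ and the two branches of \eqref{eq:RE} are restrictions of the homeomorphisms $f^l|_{[a,c)}$ and $f^r|_{(c,b]}$ — continuous, strictly increasing, with one-sided limits $f^l(c_-)$ and $f^r(c_+)$ at $c$. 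The one point that genuinely uses expansion is that these branches do not leave $K$: writing $\lambda=\inf_{x\notin F}f'(x)>1$, since $(f^l)'\ge\lambda^l$ off a finite set and $f^l(a)=a$ we get $f^l(x)-x=(f^l(x)-f^l(a))-(x-a)\ge(\lambda^l-1)(x-a)\ge0$ for all $x\in[a,c)$, hence $f^l(f^r(c_+))\ge f^r(c_+)$ and $f^l([f^r(c_+),c))\subseteq[f^r(c_+),f^l(c_-))\subseteq K$; the symmetric inequality $f^r(x)\le x$ on $(c,b]$ handles the other branch. As $R_Ef$ now inherits from $f$ the monotonicity of both branches, the correct one-sided limits at $c$, and a uniform lower bound $>1$ for its derivative off a finite set, rescaling $K$ affinely onto $[0,1]$ makes it an expanding Lorenz map, i.e. a renormalization of $f$.

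For uniqueness in \eqref{thm:A:2}, suppose $R_{B_1}f=g=R_{B_2}f$. Equality of domains gives $f^{r_1}(c_+)=f^{r_2}(c_+)$ and $f^{l_1}(c_-)=f^{l_2}(c_-)$, and equality of the left branch near $c$ forces $l_1=l_2=:l$ — otherwise, say $l_1<l_2$, the iterate $f^{l_2-l_1}$ would restrict to the identity on the nondegenerate interval $f^{l_1}([f^{r_1}(c_+),c))$, impossible under expansion — and likewise $r_1=r_2=:r$. If $e_-^{1}\le e_-^{2}$ then $f^l$ is an increasing homeomorphism on $[e_-^{1},c)\supseteq[e_-^{2},c)$ fixing $e_-^{1}$, so $f^l(x)>x$ on $(e_-^{1},c)$ by expansion, and $f^l(e_-^{2})=e_-^{2}$ forces $e_-^{2}=e_-^{1}$; symmetrically $e_+^{1}=e_+^{2}$, so $B_1$ and $B_2$ share the gap $G:=(e_-,e_+)$ about $c$. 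I would then show that every completely invariant closed $B$ with gap $G$ satisfies $[0,1]\setminus B=\bigcup_{n\ge0}f^{-n}(G)$: the inclusion $\supseteq$ is immediate from complete invariance, and for $\subseteq$, if $(p,q)\ne G$ is a component of $[0,1]\setminus B$, then, as long as $c$ has not occurred in $(p,q),f((p,q)),\dots,f^{k-1}((p,q))$, the map $f^k$ is a homeomorphism on $(p,q)$ and $f^k((p,q))$ is again a component of $[0,1]\setminus B$, with $|f^k((p,q))|\ge\lambda^k|(p,q)|$; this cannot hold for all $k$, so some $f^{k_0}((p,q))$ contains $c$ and hence equals $G$, giving $(p,q)\subseteq f^{-k_0}(G)$. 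Thus $B=[0,1]\setminus\bigcup_n f^{-n}(G)$ is determined by $g$.

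Existence in \eqref{thm:A:2} is the converse of \eqref{thm:A:1}: given a renormalization $g$ with renormalization interval $[u,v]$ ($c\in[u,v]$) and return times $l,r$, I would extend $f^l$ leftwards from $[u,c)$ as far as it remains a monotone composition missing $c$, say to $(a_0,c)$, and then take $e_-$ to be the fixed point of $f^l$ in $(a_0,u]$ (unique since $x\mapsto f^l(x)-x$ is strictly increasing; its existence is exactly what it means for $g$ to come from a genuine renormalization), and symmetrically for $e_+$. Setting $B:=[0,1]\setminus\bigcup_{n\ge0}f^{-n}((e_-,e_+))$, the component argument above shows $B$ is closed, proper and completely invariant with gap $(e_-,e_+)$, whence \eqref{thm:A:1} (or a direct computation with \eqref{eq:RE}) gives $R_Bf=g$. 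The step I expect to be the real obstacle is this existence half: extracting the correct periodic endpoints $e_\pm$ from the renormalization data requires quantitative control of where a renormalization interval sits inside $[0,1]$, and it is here — rather than in the short dynamical arguments above — that the subtleties alluded to in the introduction are likely to surface.
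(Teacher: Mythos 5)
The statement you are trying to prove is not proved in this paper. It is quoted verbatim from the literature so that the authors can explain that part~\eqref{thm:A:2} is \emph{false}. Remark~\ref{rem:thma}, via Example~\ref{51from4} and Corollary~\ref{cor:TransInv}, exhibits a renormalizable expanding Lorenz map that is locally eventually onto (hence transitive) and has no fixed point, so by Corollary~\ref{cor:TransInv} it has no proper closed completely invariant subset whatsoever; there can therefore be no $B$ with $R_Bf=g$. You already flag the existence half of~\eqref{thm:A:2} as ``the real obstacle,'' and you are right --- but it is not a technical obstacle, it is where the theorem breaks. To see where your plan fails in that example: with $l=4$ and $u=f^2(0)$, a fixed point of $f^l$ does exist (one may take $e_-=u$, since $0$ is $4$-periodic for $\hat f$), but $\hat f(\hat{u})=c_+\in(\hat{u},\hat{v})$, so $\orb(e_-)$ enters $(e_-,e_+)$, hence $e_-\notin B$ for the set $B=[0,1]\setminus\bigcup_{n\geq 0}f^{-n}((e_-,e_+))$ you construct; in fact that $B$ is empty. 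Your ``component argument'' shows only that components of $[0,1]\setminus B$ eventually reach the gap; it does not show $B\neq\emptyset$, nor that $e_-,e_+\in B$, i.e.\ that $(e_-,e_+)$ is really a gap of $B$. Both require that $\orb(e_-)$ and $\orb(e_+)$ avoid $(e_-,e_+)$, which is precisely the condition of Lemma~\ref{lem:inv}\eqref{lem:inv:1} read contrapositively, and precisely what fails here.

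Your treatment of part~\eqref{thm:A:1} is essentially sound and coincides with the setting the paper does certify: Remark~\ref{rem:thm:A} says part~\eqref{thm:A:1} holds when $E\subset(0,c)\cup(c,1)$, since then $\pi^{-1}(E)$ is a completely invariant set for $\hat f$ on $\mathbb{X}$ (Remark~\ref{rem:cominv}), where preimages are unambiguous. Your derivation of $f^l(e_-)=e_-$ from $E$-avoidance plus backward invariance, and the inequality $f^l(x)-x\geq(\lambda^l-1)(x-e_-)$ for the self-map property, both work there. What you should not treat as automatic is complete invariance near $\{0,1\}$: the preimages $f^{-1}(0)$ and $f^{-1}(1)$ depend on the conventional choice of $f(c)$, which is why the paper systematically works with $\hat f$ on $\mathbb{X}$ and asserts~\eqref{thm:A:1} only under the restriction $E\subset(0,c)\cup(c,1)$ rather than unconditionally.
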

Theorem~\ref{thm:A} is aiming at providing a beautiful connection between renormalizations and completely invariant sets.
We are going to show that unfortunately, this result is not true as stated, however we are able to provide some possible alternatives (cf. Lemma~\ref{lem:inv} or Example~\ref{ex:3}).
This way we show that some connections exist, however their complete description in full generality of Lorenz maps may be a challenging problem.
At this point it is worth to mention closely related recent studies of R. Alcazar Barrera  \cite{RB} (see also \cite{BBK}), initiated in his PhD thesis, on invariant sets arising from omitting gaps
is subshifts. Related interesting results were also obtained by Clark in her PhD \cite{Clark} (see also \cite{Clark2}).

Before we present another result from the literature, let us recall one important definition.
 Suppose $g = (f^m, f^k)$ is a renormalization map of an expanding Lorenz map $f$. 
We say that $g$ is \textit{minimal}, if any other renormalization $\hat{g} = (f^s, f^t)$ of $f$ satisfies $s\geq m$, $t\geq k$.
Let $\kappa$ be the minimal period over all periodic points of an expanding Lorenz map $f$. Then we say that $\kappa$ is the minimal period of $f$.
Another main result of \cite{Ding} tries to relate renormalizations with periodic structure in Lorenz map. 
Strictly speaking, it asserts the following:
\begin{thm}[{\cite[Thm~B]{Ding}}]\label{thm:B}
Let $f$ be an expanding Lorenz map with minimal period $\kappa$, $1 < \kappa < \infty$. Then we have the following statements:
\begin{enumerate}
\item\label{thm:B:1} $f$ admits a unique $\kappa$-periodic orbit $O$.
\item\label{thm:B:2} $D = \overline{\bigcup_{n=0}^\infty f^{-n}(O)}$ is the unique minimal completely invariant closed set of $f$.
\item\label{thm:B:3} $f$ is renormalizable if and only if $[0,1]\setminus D\neq \emptyset$. If $f$ is renormalizable, then $R_Df$, the
renormalization associated to $D$ by \eqref{eq:RE}, is the unique minimal renormalization of $f$.
\item\label{thm:B:4} The following trichotomy holds: (i) $D = [0,1]$, (ii) $D =O$, (iii) $D$ is a Cantor set.
\end{enumerate}
\end{thm}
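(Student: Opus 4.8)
The plan is to establish the four assertions in order, treating (1) and the key inclusion in (2) as the substantive work and reading off (3) and most of (4) almost formally from them together with Theorem~\ref{thm:A}. For (1) I would argue purely symbolically. Since both laps of $f$ are strictly increasing, the itinerary map $x\mapsto\theta(x)\in\{0,1\}^{\N}$ is monotone for the lexicographic order, and a $\{0,1\}$-sequence is realised as an itinerary precisely when all of its shifts lie lexicographically between the two kneading sequences $\theta(f(c^{+}))$ and $\theta(f(c^{-}))$; moreover, as $f$ is expanding, forward images of any interval eventually contain $c$, so there are no wandering intervals and $\theta$ is injective off the countable set of preimages of $c$. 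Hence period-$\kappa$ orbits not meeting $c$ correspond bijectively to shift-orbits of admissible sequences of primitive period $\kappa$, and (1) reduces to the combinatorial claim that minimality of $\kappa$ makes this shift-orbit unique. I would prove that by showing that two distinct admissible $\kappa$-periodic words would force the admissibility window $[\theta(f(c^{+})),\theta(f(c^{-}))]$ to be so wide that it already contains an admissible periodic word of length $<\kappa$: if the two orbits have distinct ``rotation numbers'' $p/\kappa\neq q/\kappa$, the interval spanned by their extremal cyclic rotations forces a rational of smaller denominator, hence a shorter admissible period; if they share a rotation number, the less extremal of the two is already dominated by the rotational (Sturmian) word of that slope, again exhibiting a shorter admissible period --- in either case contradicting minimality of $\kappa$ (this is the combinatorial core of the analyses in \cite{Hof1,MSP2}).

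For (2), closedness of $D=\overline{\bigcup_{n\ge0}f^{-n}(O)}$ is immediate, and complete invariance is easy, $\bigcup_{n\ge0}f^{-n}(O)$ being an increasing union (here $f(O)=O$ is used) equal to its own $f$-preimage and mapped into itself, with the finitely many exceptional points and $c$ handled by hand. The substantive claim is that $D$ lies inside every nonempty completely invariant closed set $E$; granting it, $D$ is automatically the unique minimal such set, being one of them and contained in all of them. If $E=[0,1]$ there is nothing to prove, so suppose $E$ is proper; by Theorem~\ref{thm:A}(\ref{thm:A:1}) its central boundary points $e_{-},e_{+}$ are periodic, of periods $l=N((e_{-},c))$ and $r=N((c,e_{+}))$, each $\ge\kappa$ by minimality. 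The delicate point is to upgrade ``$E$ contains a periodic orbit'' to ``$E\supseteq O$'': one must see that the expanding Lorenz map $R_{E}f$ of \eqref{eq:RE}, glued back into $f$, forces the $f$-orbit of $e_{-}$ to realise the \emph{minimal} period $\kappa$, hence by (1) to be $O$; then $E$ closed with $f^{-1}(E)=E$ gives $\bigcup_{n}f^{-n}(O)\subseteq E$, i.e.\ $D\subseteq E$.

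Part (3) is then bookkeeping. If $D=[0,1]$ there is no proper completely invariant closed set at all, so $f$ is not renormalizable by Theorem~\ref{thm:A}(\ref{thm:A:2}); if $D\neq[0,1]$ then $D$ is such a set and $R_{D}f$ is a renormalization by Theorem~\ref{thm:A}(\ref{thm:A:1}), and it is the minimal one by monotonicity: for any proper completely invariant closed $E$ we have $D\subseteq E$ by (2), so the central gap of $E$ sits inside that of $D$, so the associated values of $N$ only grow, i.e.\ writing $R_{D}f=(f^{l_{0}},f^{r_{0}})$ and $R_{E}f=(f^{l},f^{r})$ we get $l\ge l_{0}$ and $r\ge r_{0}$, and every renormalization is such an $R_{E}f$ by Theorem~\ref{thm:A}(\ref{thm:A:2}). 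For (4) the natural attempt is: if $\Int D\neq\emptyset$, an interval $I\subseteq D$ has $|f^{n}(I)|$ growing until $f^{n}(I)$ meets $c$, after which $f^{n+1}(I)$ contains intervals accumulating at $0$ and at $1$, all staying in $D$ by forward invariance, which forces $D=[0,1]$ (the locally-eventually-onto behaviour of expanding Lorenz maps); and if $\Int D=\emptyset$, then either $D$ is perfect, hence a Cantor set, or it has an isolated point, in which case the accumulation set of $D$ is a proper completely invariant closed subset, hence empty by the minimality from (2), so $D$ is finite --- and one would like to conclude $D=O$.

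I expect two obstacles. The genuinely technical one is the inclusion in (2): Theorem~\ref{thm:A} alone only yields periodic boundary points of period $\ge\kappa$, and turning this into ``$E\supseteq O$'' requires using minimality of $\kappa$ in an essential way, and without circular appeal to (4). The second is more serious: the last step of the attempt for (4) implicitly claims that a finite completely invariant set is a single periodic orbit, but an orbit point lying outside the range $f([0,1])$ --- which can occur at $0$, at $1$, or inside a gap of that range --- may acquire a preimage not lying on $O$, so $\overline{\bigcup_{n}f^{-n}(O)}$ can be finite and strictly larger than $O$ (for instance an $f$ with a period-$2$ orbit $\{a,b\}$, $a<c<b$, with $f(1)=b$ and $1\notin\{a,b\}$, so that $f^{-1}(\{a,b\})=\{a,b,1\}$). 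Hence the trichotomy in (4) appears to be missing a case and should be read with care, which is consistent with the correction of ``gaps'' announced in the introduction.
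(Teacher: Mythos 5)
There is no ``paper's own proof'' of this statement to compare against: the paper \emph{disproves} Theorem~\ref{thm:B} rather than proving it. You should re-read Remark~\ref{rem:thmb}, Example~\ref{lem:tBce}, and Example~\ref{ex:5_2_cycle}. The paper's position is that the original proof in \cite{Ding} rests on Lemma~\ref{lem32Ding} (Ding's Lemma~4.2), which is shown to fail on the map of Example~\ref{51from4}, and that part~\eqref{thm:B:3} is flatly false: in Example~\ref{51from4} one has $D=[0,1]$ while $f$ \emph{is} renormalizable (so the ``if and only if'' breaks), and in Example~\ref{ex:5_2_cycle} the renormalization $R_D f=(f^5,f^5)$ exists but is dominated by $(f^3,f^2)$, so it is not the minimal one. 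Only part~\eqref{thm:B:1} survives in the paper's corrected form (Theorem~\ref{thm:Bimprov}\eqref{thm:Bimprov:2}, stated for $\hat f$ on $\mathbb{X}$), and for $D_O$ the paper establishes only forward invariance plus $\hat f^{-1}(D_O)\setminus D_O\subset\{0,1\}$, not minimality, not uniqueness, and not the renormalization claims.

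This localises exactly where your argument goes wrong. Your entire reasoning for part~(3) is a deduction from Theorem~\ref{thm:A}\eqref{thm:A:2}: you argue ``$D=[0,1]$ $\Rightarrow$ no proper completely invariant closed set $\Rightarrow$ not renormalizable'', and later ``every renormalization is an $R_E f$ for some such $E$'', both times invoking Theorem~\ref{thm:A}\eqref{thm:A:2}. But Theorem~\ref{thm:A}\eqref{thm:A:2} is itself false, as Remark~\ref{rem:thma} explains: for the map of Example~\ref{51from4} there is a renormalization $g$ with $F_g=[0,1]$, and by Corollary~\ref{cor:TransInv} that map has no proper closed completely invariant subset at all, so no set $B$ with $R_B f=g$ exists. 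Your ``bookkeeping'' for~(3) is therefore a non-argument; the conclusion it aims at is false. Your instinct about~(4) --- that a finite completely invariant set need not coincide with $O$ because points of $O$ can pick up extra preimages at $0$ or $1$ or inside gaps of $f([0,1])$ --- is sound and is precisely why the paper works with $\hat f$ on $\mathbb{X}$ and only proves the weaker invariance in Theorem~\ref{thm:Bimprov}\eqref{thm:Bimprov:1}. You should also note that your invocation of Theorem~\ref{thm:A}\eqref{thm:A:1} in part~(2) requires $E\subset(0,c)\cup(c,1)$ (Remark~\ref{rem:thm:A}); as stated, $E$ may contain $0$ or $1$, and then the periodicity conclusion for $e_\pm$ is not automatic. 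In short: part~(1) can be proved (the paper does so by an interval argument in Theorem~\ref{thm:Bimprov}, different from your symbolic/rotation-number sketch but compatible with it), parts~(2)--(4) as stated cannot, and the step you flagged as merely ``delicate'' in~(2) and glossed as ``bookkeeping'' in~(3) is where the statement genuinely breaks.
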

As we will see later, in particular in Remark~\ref{rem:thmb},
Example~\ref{51from4} and Example~\ref{ex:5_2_cycle}, some arguments in the proof of Theorem~\ref{thm:B} fail, leading to incorrect statements. However, still some statements are valid.

Let us also recall an important definition from \cite{Ding}, motivated by Theorem~\ref{thm:B}. We adjust it to our setting, as otherwise it may be rarely satisfied (see Example~\ref{ex:5_2_cycle}).
A renormalization $g$ is said to be \textit{periodic} if there exists a periodic orbit $O$ such that the set $D = \overline{\bigcup_{n=0}^\infty f^{-n}(O)}$ satisfies $D=O$ and $g=R_D f$. We say that $f$ is \textit{periodically renormalizable}, provided that renormalization $R_D f$ of $f$ associated to minimal cycle $O$ of $f$ is well defined and periodic.
Mentioned above theorems from \cite{Ding} were an inspiration to some results in \cite{Cui}.

To state these results, let us
consider a family of piecewise linear maps of the form:

$$
f:=f_{a,b,c}(x)=
\begin{cases}
	
	ax+1-ac, & x\in [0,c)\\
	b(x-c), & x\in (c,1]\\
	
\end{cases} 
$$

and we additionally assume that they are Lorenz maps, that is $0\leq f(0)<f(1)\leq 1$ and $a,b>0$. 
Note that $f(0)<f(1)$
is equivalent to $1<ac+b(1-c)$. Following the authors of \cite{Cui} we denote by $\mathcal{L}$
the class of maps $f_{a,b,c}$ satisfying $0\leq f(0)<f(1)\leq 1$ and with different slopes, i.e. $a\neq b$.
Now we are able to state the following result from \cite{Cui}.

\begin{thm}[Main Theorem in \cite{Cui}, condition (1)]\label{thm:CuiMain}
Let $f \in \mathcal{L}$. Then exactly one of the following conditions holds:
\begin{enumerate}
	\item $f$ is periodic renormalizable and rotation interval is degenerated,
	\item $f$ is not renormalizable and rotation interval is non-degenerated.
\end{enumerate}
\end{thm}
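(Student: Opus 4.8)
The plan is to organise everything around the \emph{rotation interval}. For an expanding Lorenz map $f$ with critical point $c$, let $\operatorname{Rot}(f)$ be the closure of the set of all limits $\lim_{n\to\infty}\tfrac1n\#\{0\le i<n:f^i(x)\ge c\}$ over $x\in[0,1]$; by the rotation theory of such maps it is a compact interval whose two endpoints are the rotation numbers of the two kneading sequences of $f$, so $\operatorname{Rot}(f)$ is degenerate exactly when those sequences share a rotation number. The two alternatives are mutually exclusive — one makes $f$ renormalizable, the other not — so it suffices to show every $f\in\mathcal L$ satisfies one of them, and I would deduce this from the equivalence, for $f\in\mathcal L$, of the three properties: (a) $\operatorname{Rot}(f)$ is degenerate; (b) $f$ is renormalizable; (c) $f$ is periodically renormalizable. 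Indeed alternative~(1) asserts (a) and (c), alternative~(2) asserts the negations of (b) and of (a), and once (a), (b), (c) are equivalent these two cases exhaust $\mathcal L$. I shall use throughout that a map in $\mathcal L$ admits only finitely many renormalizations: a renormalization of $f_{a,b,c}$ is again piecewise linear, its two branches having slopes which are products of at least two of the numbers $a,b>1$, so along any renormalization tower the slopes grow without bound while the renormalization intervals — each carried onto itself by two expanding branches — must shrink, which is impossible past a finite stage.

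For (a)$\Rightarrow$(b) I argue by contraposition: suppose $f\in\mathcal L$ is not renormalizable and $\operatorname{Rot}(f)=\{\rho\}$. First, $\rho$ is rational, for otherwise $f$ has no periodic orbit (periodic orbits have rational rotation numbers), contradicting that expanding Lorenz maps always have periodic points; write $\rho=p/q$, noting that expanding maps in $\mathcal L$ have $\rho\notin\{0,1\}$, so $q\ge 2$. The rotation theory then furnishes a continuous monotone $h$ of $[0,1]$ onto the circle semiconjugating $f$ to the rotation $R_{p/q}$, and since $[0,1]$ has no continuous bijection onto the circle, some fibre of $h$ is a nondegenerate closed interval. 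The fibres of $h$ are permuted by $f$ in accordance with $R_{p/q}$; if none of them contained $c$ in its interior, then taking a nondegenerate fibre $I_0$, none of $I_0,f(I_0),\dots,f^{q-1}(I_0)$ would meet $c$ in its interior, so $f^q|_{I_0}$ (a self-map of $I_0$ since $R_{p/q}^q=\operatorname{id}$) would be a monotone expanding self-map of the interval $I_0$ — impossible. Hence some fibre $I$ has $c\in\operatorname{int}I$; then $f^q(I)\subseteq I$ and $f^q|_I$ is a Lorenz-type self-map of the proper subinterval $I$, so $f$ is renormalizable — the desired contradiction. Trimming $I$ to a genuine renormalization interval with the correct one-sided limits at $c$ is routine.

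For (b)$\Rightarrow$(c), let $f\in\mathcal L$ be renormalizable, $O$ its minimal cycle and $D=\overline{\bigcup_{n\ge 0}f^{-n}(O)}$. Since $\mathcal L$-maps are not infinitely renormalizable, $D$ is not a Cantor set; and $D\neq[0,1]$ because $f$ is renormalizable. (For maps in $\mathcal L$ these are exactly the robust content of Theorem~\ref{thm:B}; the behaviour that breaks its general statement, illustrated by the examples of this paper, does not arise for piecewise linear maps with distinct slopes, and can in any case be verified directly here.) Hence $D=O$, i.e.\ $f$ is periodically renormalizable. For (c)$\Rightarrow$(a): when $D=O$ the minimal cycle is a completely invariant finite set of rotation type, so $O=\{o_0<\dots<o_{\kappa-1}\}$ with $f(o_i)=o_{i+p\bmod\kappa}$ for a fixed $p$ coprime to $\kappa$. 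Then $f$ maps each complementary gap of $O$ onto the next gap in this rotational order — including the gap containing $c$, which is cut at $c$ into two pieces sent to the top and bottom of $[0,1]$ and hence, after identifying $0$ with $1$, again onto a single gap. Collapsing every gap of $O$ to a point realises $f$ as semiconjugate to $R_{p/\kappa}$, so every orbit of $f$ has rotation number $p/\kappa$ and $\operatorname{Rot}(f)=\{p/\kappa\}$ is degenerate. This closes the cycle of implications and the theorem follows.

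The genuine obstacle, I expect, is the implication (b)$\Rightarrow$(c): deriving ``$D=O$'' from mere renormalizability rests on precisely the delicate portions of Theorem~\ref{thm:B}, and making this rigorous within $\mathcal L$ means either pinning down which of those arguments survive or supplying a self-contained proof that uses piecewise linearity and $a\neq b$ to exclude the flat behaviour responsible for the known counterexamples. Secondary points — that minimal cycles are rotation orbits in this setting, that $\rho\notin\{0,1\}$ for expanding maps in $\mathcal L$, and that the fibre $I$ of the second step can be trimmed to a bona fide renormalization interval — are standard but need care, in particular to avoid circularity with the rotation-number bookkeeping.
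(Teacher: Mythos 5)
The paper does not prove Theorem~\ref{thm:CuiMain}; it is quoted from \cite{Cui} and then immediately put under scrutiny. So there is no ``paper's proof'' to compare against, and the more important point is that Example~\ref{ex:OandD} (a map $f_{a,b,c}$ with $a=1+\sqrt7$, $b=\tfrac13(\sqrt7-1)$, $c=\tfrac14$, which does lie in $\mathcal{L}$ since $a\neq b$ and $0<f(0)<f(1)<1$) is a map that is renormalizable, has degenerate rotation interval, yet is \emph{not} periodically renormalizable because $D=\overline{\bigcup_n f^{-n}(O)}$ contains $c$ and hence is all of $[0,1]$. That single example defeats the dichotomy as you are interpreting it: $f$ fails both alternatives.

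The fatal step in your argument is exactly the one you flagged as ``the genuine obstacle,'' namely (b)$\Rightarrow$(c). You write ``$D\neq[0,1]$ because $f$ is renormalizable'' and justify this by appealing to the trichotomy of Theorem~\ref{thm:B}; but Remark~\ref{rem:thmb} and Example~\ref{ex:OandD} show precisely that Theorem~\ref{thm:B}(3) is false, and the failure occurs already inside $\mathcal{L}$. Your parenthetical hope that ``the behaviour that breaks its general statement \dots\ does not arise for piecewise linear maps with distinct slopes'' is contradicted head-on by Example~\ref{ex:OandD}: it is a piecewise linear map with distinct slopes, the minimal cycle $O=\{f(0),f(1)\}$ has $c\in\bigcup_n f^{-n}(O)$ (since $f^2(0)=f(1)$ forces $c$ into the backward orbit), and so $D=[0,1]$ while $f$ is nonetheless renormalizable. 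Consequently $D=O$ is not obtainable here, the implication (b)$\Rightarrow$(c) is simply false in $\mathcal{L}$, and no amount of care about rotation numbers or finiteness of the renormalization tower can repair it. A correct treatment must either change the definition of periodic renormalizability (as the authors hint by noting they have already adjusted the definition relative to \cite{Cui,Ding}) or restrict the class of maps so that the backward orbit of the minimal cycle cannot reach $c$; proving the statement as literally quoted, with the present definitions, is not possible.
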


To avoid confusion, let us emphasize at this point an important notation. In his paper \cite{Ding}, Ding uses the following definition of the preimage of a point $x\in[0,1]$: for any number $k\in\N$, we say that the point $y$ is a (double-side) $k$-preimage of $x$, if $\lim_{z\to y_+}f^k(z)=\lim_{z\to y_-}f^k(z)=x$. We denote the set of all $k$-preimages of $x$ in this sense by $\{f^{-k}(x)\}$ to distinguish it from the set of all standard (one-side) $k$-preimages $f^{-k}(x)$. Let us also note that $\{f^{-1}(0)\}=\{f^{-1}(1)\}=\emptyset$, which suggests that Ding uses one-side preimage when defining completely invariance (cf. \cite[Lemma~3.1]{Ding}). Unfortunately, the difference between these two notions of a preimage is not sufficiently addressed in \cite{Cui} and \cite{Ding}, which may lead to inaccuracies. The example below illustrates the importance of distinguish one-side and double-side preimage.

\begin{exmp}\label{ex:OandD}
Denote $f:=f_{a,b,c}$ for
	$$
	a=1+\sqrt{7},\quad b=\frac{1}{3}(\sqrt{7}-1),\quad\text{ and }\quad c=\frac{1}{4}.
	$$
Then $0<f(0)<f(1)<1$ and $a\neq b$, that is $f\in\mathcal{L}$.
Observe that $f^2(0)=f(1)$, so $O:=\{f(0),f(1)\}$ is minimal $2$-cycle. Clearly 
$\bigcup_{n=0}^\infty f^{-n}(O)$ is dense in $[0,1]$ as it contains $c$.
In particular it means that $f$ is renormalizable but not periodic renormalizable according to definition in \cite{Cui} (i.e. $D\neq O$), however the rotation interval 
is degenerated because $f^2[f(0),f(1)]\subset [f(0),f(1)]$ which immediately implies that the rotation number of all points is $1/2$ (see argument at p. 249 in \cite{Cui}).
\end{exmp}
The above example has also other consequences. Namely, statements of Theorem~\ref{thm:B} assert that $D$ should lead to a minimal renormalization.
This fact is heavily used in the proof of \cite[Thm~C]{Ding}. However, Example~\ref{ex:OandD} shows that while a renormalization exists, it cannot be derived from the set $D$.
Note that while $b<1$ in $f:=f_{a,b,c}$ in Example~\ref{ex:OandD}, it is not hard to check that it is conjugate to an expanding Lorenz map $T_f$ from Example 6.1 in \cite{OPR}.
Let us also stress once again the fact, that even when $D$ leads to renormalization, it does not have to be minimal, see Example~\ref{ex:5_2_cycle}.

Finally, in \cite{Bru1}, the authors raise the question about parameters $\beta $ for which generalized $\beta$-transformation has matching (this class contain all linear Lorenz maps). They ask whether there exist  generalized $\beta$-transformations with non-Pisot and non-Salem number $\beta$ and matching (see the end of p.72 in \cite{Bru1}). Our Example~\ref{51from4} answers this question affirmatively, showing that such map exists. 

The paper is organized as follows. In Section~\ref{sec:pre} we recall preliminary facts and basic properties related to Lorenz maps used later.
In Section~\ref{sec:ThmA} we present several examples related to Theorem~\ref{thm:A} and characterize some completely invariant sets in case of maps with primary $n(k)$-cycles.
We also provide partial characterization of cases, when renormalizations can be characterized by these sets, and also highlight cases when renormalization exists but does not define proper completely invariant subset.
Section~\ref{sec:thmB} studies minimal periodic orbits in the spirit of Theorem~\ref{thm:B}, in particular their connections to completely invariant sets and renormalizations.
 We also prove that transitive expanding Lorenz maps are strongly transitive.
Section~\ref{sec:periodic} considers periodic renormalizations and the last section provides an algorithm to detect the renormalizations that can be successfully recovered from the corresponding sets $J_g$.

\section{Preliminaries}\label{sec:pre}
In what follows, $\N$ always denotes the set of positive integers {and $\mathbb{N}_0=\mathbb{N}\cup\{0\}$}; $B(x,r)$ denotes open ball with center $x$ and radius $r$; \textit{Cantor set} is any compact, totally disconnected set without isolated points.

\subsection{Lorenz maps}

As we see from the definition of Lorenz map $f$, it has discontinuities. It is also not completely clear if we should define $f(c)=0$ or $f(c)=1$,
since both assignments are equally reasonable.
To deal with this problem, we will use standard doubling points construction 
(see e.\,g.\ \cite{Rai2} for details) changing phase space $[0,1]$ into Cantor set $\mathbb{X}$, and $f$ into continuous map $\hat{f}$.
In this 
construction all elements in $\left(\bigcup_{n=0}^{\infty}
f^{-n}\{c\}\right)\setminus\{0,1\}$ are doubled 
(we perform a kind of Denjoy 
extension). We easily see that this new space differs from the 
original interval~$[0,1]$ by at most countably many points and we do not modify endpoints, hence clearly the new space $\mathbb{X}$
is a Cantor set. 
If $I_e$ is inserted ''hole'' in place of a point $e$, and since $f$ preserves orientation of $[0,1]$,
when defining $\hat{f}\colon \mathbb{X}\to \mathbb{X}$, we send its endpoints into respective endpoints of the hole $I_{f(e)}$,
if there is a hole related to $f(e)$ or into the point $f(e)$ if it was not blown up (i.e. when it is $0$ or $1$).
Also, if we denote by $I_c=[c_-,c_+]$ the hole related to critical point, and $f(0)=c$ then we define $\hat{f}(0)=c_+$, and the case $f(1)=c$
is dealt analogously. It is done the same way, when $f^n(0)=c$ but this time  $\hat{f}(0)=a_+$ where $I_{f(0)}=[a_-,a_+]$. Note that in this case
also $a_-$ will be in image of $\hat{f}$, because $f$ is expanding and so in particular $f(1)>f(0)$. We have also a natural continuous projection $\pi\colon \mathbb{X}\to [0,1]$
``gluing'' inserted points back together.

Following \cite{Rai2}, we define the linear order on $\mathbb{X}$ as follows. Let $x$, $y\in\mathbb{X}$. If $x, y$ are the endpoints of the same hole $I_a=[a_-,a_+]$, then
	$$
	x<y\iff x=a_-\hspace{0.2cm}\text{and}\hspace{0.2cm}y=a_+.
	$$
Otherwise
	$$
	x<y\iff\pi(x)<\pi(y).
	$$
For $x$, $y\in\mathbb{X}$ with $x<y$ we denote
$$
n(x,y):=\min\left\lbrace k\in\mathbb{N}_0\hspace{0.2cm}|\hspace{0.2cm}\exists z\in\left(\bigcup_{j=0}^k f^{-j}(c) \right)\setminus\{0,1\}:x<z_+\hspace{0.2cm}\text{and}\hspace{0.2cm}z_-<y \right\rbrace.
$$
Then we can define the metric on $\mathbb{X}$ as
\begin{equation}
\label{eq:metricX}
d(x,y):=
\begin{cases}
|\pi(x)-\pi(y)| +\frac{1}{N(x,y)+1},& x\neq y\\
0, & x=y\\
\end{cases},
\end{equation}
where
$$
N(x,y):=
\begin{cases}
	n(x,y); & x<y\\
	n(y,x); & x>y\\
\end{cases}.
$$
Note that the topology generated by the metric $d$ coincides with the order topology on $\mathbb{X}$.
\begin{rem}\label{rem:density}
By \eqref{eq:metricX}, clearly the density of the set $\bigcup_{n\geq 0}f^{-n}(c)$ in $[0,1]$ implies the density of the sets $\bigcup_{n\geq 0}\hat{f}^{-n}(c_-)$ and $\bigcup_{n\geq 0}\hat{f}^{-n}(c_+)$ in $\mathbb{X}$.
\end{rem} 

If $u\in [0,1]$ and $|\pi^{-1}(u)|=1$ then we simply write $u$
to denote the unique point in $\pi^{-1}(u)$. Such identification should cause no confusion. Let us note that by notation $\hat{f}(a_+)$ we mean the image of $a_+\in\mathbb{X}$ by $\hat{f}$ and by the notation $f(a_+)$ we mean the right-sided limit of $f$ at a point $a\in[0,1]$. Similarly for $a_-$.

\begin{rem}
While we assume in the definition of Lorenz map that
$f$ is differentiable for all points not belonging to a finite 
set $F\subseteq [0,1]$ and $\inf_{x\not\in F} f'(x)>1$, in fact in many cases we only use the property that 
the set $\bigcup_{n\geq 0}f^{-n}(c)$ is dense in $[0,1]$.
\end{rem}

\subsection{Topological dynamics}
We say that a continuous map $T\colon X\to X$ acting on a compact metric 
space is \textit{(topologically) transitive} if for every two nonempty 
open sets $U,V\subseteq X$ there is an integer $n>0$ such that 
$T^n(U) \cap V\neq \emptyset$. The map $T$ is 
\textit{(topologically) mixing} if for every two nonempty open sets 
$U,V\subseteq X$ there is an $N>0$ such that for every $n>N$ we have 
$T^n(U) \cap V\neq \emptyset$.
If $f$ is a Lorenz map, then we say that $f$ is transitive or mixing, if induced map $\hat f$ on $\mathbb{X}$ is transitive or mixing, respectively. Note that if $U,V\subset \mathbb{X}$ are open, then $\hat{f}^n(U)\cap V\neq \emptyset$ iff $\Int f^n(\pi(U))\cap \pi(V)\neq \emptyset$. Clearly, any open set $U\neq\emptyset$ contains open and nonempty $U'\subset U$ such that  $U'=\pi^{-1}(\pi(U'))$.

The following definition can be stated for continuous maps on compact metric spaces in a slightly different way. We will use
approach from \cite{GS} (cf. \cite{OPR}) in the setting of Lorenz maps.
Suppose that $f$ is an expanding Lorenz map. Then 
$f$ is \textit{locally eventually onto} if 
for every nonempty open subset~$U\subseteq [0,1]$ there 
exist open intervals $J_1,J_2\subseteq U$ and 
$n_1,n_2\in\mathbb{N}$ such that $f^{n_1}$ maps $J_1$ 
homeomorphically to $(0,c)$ and $f^{n_2}$ maps $J_2$ 
homeomorphically to $(c,1)$.

The following is an extension of the above definition, introduced in \cite{OPR} to describe lack  of renormalization (it turned out that locally eventually onto is not enough).
	An expanding Lorenz map $f$ is 
	\textit{strongly locally eventually onto} if for every 
	nonempty open subset~$U\subseteq [0,1]$ there exist open 
	intervals $J_1,J_2\subseteq U$ and 
	$n_1,n_2\in\mathbb{N}$ such that:
	\begin{enumerate}
		\item $f^{n_1}$ maps $J_1$ homeomorphically to $(0,c)$,
		\item the restriction of~$f^k$ to~$J_1$ is continuous 
		for all~$k\in\{0,1,\dots,n_1\}$,
		\item $f^{n_2}$ maps $J_2$ homeomorphically to $(c,1)$,
		\item the restriction of~$f^k$ to~$J_2$ is continuous 
		for all~$k\in\{0,1,\dots,n_2\}$.
	\end{enumerate}

\begin{rem}
Note that if we pass from $[0,1]$ to $\mathbb{X}$ then definition of locally eventually onto is more than enough to obtain standard definition of locally eventually onto for maps acting on compact metric spaces.
\end{rem}

\subsection{Lorenz maps and renormalization}
Let $f$ be an expanding Lorenz map and let $\hat{f}$ be the map induced on $\mathbb{X}$. A nonempty set $E\subset [0,1]$ is said to be \textit{completely invariant} under $f$, if $f(E)=E=f^{-1}(E)$. Similarly, a nonempty set $\hat{E}\subset\mathbb{X}$ is said to be \textit{completely invariant} under $\hat{f}$, if $\hat{f}(\hat{E}) = \hat{E} = \hat{f}^{-1}(\hat{E})$.
\begin{rem}\label{rem:cominv}
Let $E\subset (0,c)\cup(c,1)$ be a nonempty set. Then the set $E$ is completely invariant under $f$ if and only if the set $\pi^{-1}(E)$ is completely invariant under $\hat{f}$.
\end{rem}

A completely invariant closed set $D$ is \textit{minimal} if $D\subset E$ for every completely invariant closed set.
\begin{rem}
By the definition of completely invariant set  $E$ we should be able to equivalently write $f(E)\cup f^{-1}(E)\subset E$, because
$$
E\subset f^{-1}(f(E))\subset f^{-1}(E)\subset E.
$$
However deciding what are $f^{-1}(0)$ or $f^{-1}(1)$ is always problematic, and the above may easily lead to false statements. Therefore we will rather refer to map $\hat f$ when speaking about complete invariance,
since there all pre-images are clear (also for ``one-sided'' points $x_-$, $x_+$).
\end{rem}

For any nonempty open interval $U\subset [0,1]$, put 
\begin{equation}\label{def:N(U)}
N(U)=\min\{n\geq 0:f^n(z)=c\text{ for some }z\in U\}.
\end{equation}
It is clear by the definition that $c \in f^{N(U)}(U)$, so if $V\subset U$ then $N(U) \leq N(V )$ and $N(f^i(U))=N(U)-i$ for any $i=0, 1,\ldots , N(U)$.
Note that by the definition, $N(U)$ is usually the maximal integer such that $f^i$ is continuous on $U$ for $i=0,\ldots,N(U)$, where as usual we denote $f^0=\text{id}$.
The only situation that $f^i(U)$ does not lose continuity when passing through $c$ may happen when $f^i(U)\neq \Int f^i(U)$ and $c$ is at the boundary of this set.

Recall also that if $f(z)=c$ and $z\neq 0,1$ then $z$ splits into two points $z_-,z_+\in \mathbb{X}$.
Since formally, we have to define $f(c)=0$ or $f(c)=1$ the image of $f$ is always $[0,1)$ or $(0,1]$ (with the only possible exception when $f$ has a fixed point). A possible solution is to work with one-sided limits
$f(c_-)$ and $f(c_+)$, however we decided to use the space $\mathbb{X}$ for the clarity of the exposition and to avoid ambiguity.

Although we have already defined the Lorenz map as a map on the interval $[0,1]$, it is sometimes convenient to consider Lorenz maps on any interval $[a, b]$. Thus we say that $f$ is a Lorenz map on $[a, b]$, if taking the linear increasing homeomorphism $h\colon[a,b]\to[0,1]$ the composition $h\circ f\circ h^{-1}$ is a Lorenz map on $[0,1]$.

\begin{defn}\label{defn:renormalization}
Let $f$ be an expanding Lorenz map. If there is a proper subinterval $(u, v) \ni c$ of $(0,1)$ and integers $l, r > 1$ such that the map $g\colon [u, v]\to [u, v]$ defined by
	\begin{equation*}
	g(x)=\begin{cases}f^l (x),\,&\text{if 
		$x\in\left[u,c\right)$,}\\
	f^r (x),\,&\text{if $x\in (c,v],$}\end{cases}
	\end{equation*}
is itself a Lorenz map on $[u, v]$, then we say that $f$ is \emph{renormalizable} or that $g$ is a \emph{renormalization} of $f$. The interval $[u, v]$ is called the \emph{renormalization interval}. 

Furthermore, we say that an expanding Lorenz map $f$ is \emph{trivially renormalizable} if it satisfies the above definition with $(l,r)=(2,1)$ or $(l,r)=(1,2)$.
\end{defn}

\begin{rem}\label{rem:renorm}
The cases when $f^i(0)=c$ or $f^j(1)=c$ may cause problems with definition of renormalization, excluding some natural candidates.
For these situations we define $g(u)=f^l(u_+)$ and $g(v)=f^r(v_-)$.
\end{rem}

 Suppose $g = (f^m, f^k)$ is a renormalization map of an expanding Lorenz map $f$ with renormalization interval $[u, v] := [f^k(c_+), f^m(c_-)]$. Put
 \begin{equation}
 F_g =\{x\in [0,1]: \orb(x)\cap (u, v)\neq \emptyset\},\label{eq:Fg}
 \end{equation}
and
$$
 J_g =[0,1]\setminus F_g=\{x\in [0,1]: \orb(x)\cap (u, v)=\emptyset\}.
$$
The same way we define $\hat{F}_g, \hat{J}_g\subset \mathbb{X}$, using the map $\hat{f}$. In particular:
 \begin{equation}
\hat{F}_g =\{x\in \mathbb{X}: \orb(x)\cap (\hat{f}^k(c_+), \hat{f}^m(c_-))\neq \emptyset\}.\label{eq:hFg}
 \end{equation}

Before we proceed, let us make a simple observation.
$$
x\in\pi^{-1}(F_g)\iff \pi(x)\in F_g\iff\exists\ n\in\mathbb{N}_0:f^n(\pi(x))\in(u,v)
$$
Suppose $x\in\pi^{-1}(F_g)\setminus\hat{F}_g$. In particular $c_-$, $c_+\notin\orb(x)$, so $f^n(\pi(x))=\pi(\hat{f}^n(x))$. 
If we denote $\hat{u}=\hat{f}^k(c_+)$ and $\hat{v}=\hat{f}^m(c_-)$ then for each $n\geq 0$
$$
\hat{f}^n(x)\leq\hat{u}\quad\text{or}\quad\hat{f}^n(x)\geq\hat{v}
$$
and so
$$
f^n(\pi(x))=\pi(\hat{f}^n(x))\leq\pi(\hat{u})=u\quad\text{or}\quad f^n(\pi(x))=\pi(\hat{f}^n(x))\geq\pi(\hat{v})=v,
$$
which contradicts $x\in\pi^{-1}(F_g)$. 
Next, let $x\in\hat{F}_g$, i.e. $\hat{f}^n(x)\in(\hat{u},\hat{v})$ for some $n\in\mathbb{N}_0$. If $c_-\in\orb(x)$ or $c_+\in\orb(x)$, then $c\in\orb(\pi(x))$ and $\pi(x)\in F_g$. On the other hand $c_-$, $c_+\notin\orb(x)$ implies $f^n(\pi(x))=\pi(\hat{f}^n(x))$. Since $\hat{u}<\hat{f}^n(x)<\hat{v}$ 
we obtain $u<f^n(\pi(x))<v$. Therefore $\pi^{-1}(F_g)=\hat{F}_g$.
Summing up, we have the following.
\begin{rem}\label{rem:FgJg}
Since $c\in (u,v)$, there is almost no difference if we define $F_g$ for $f$ or $\hat{f}$, i.e. by the formula \eqref{eq:Fg} and then pull it back to $\mathbb{X}$,
or define $\hat{F}_g$ in $\mathbb{X}$ for the set $(\hat{f}^k(c_+), \hat{f}^m(c_-))$ and then project it into $[0,1]$.
Furthermore, we have the following equalities
$$
\hat{F}_g=\pi^{-1}(F_g)\quad\text{and}\quad\hat{J}_g=\pi^{-1}(J_g).
$$
\end{rem}

 Suppose $g = (f^m, f^k)$ is a renormalization map of an expanding Lorenz map $f$ with renormalization interval $[u, v] := [f^k(c_+), f^m(c_-)]$.
As we will see later (e.g. see Example~\ref{ex:4}), contrary to the statement in the proof of Theorem~A in \cite{Ding}, the set $F_g$ does not have to be completely invariant (this example is expressed in terms of $\hat{F}_g$, however it corresponds to $F_g$ in the language of \cite{Ding}). However, the following simple, yet useful statement holds:

\begin{lem}\label{lem:inv}
	Let $g = (f^m, f^k)$ be a renormalization map of an expanding Lorenz map $f$ and let $\hat{F}_g$ be defined by \eqref{eq:hFg}. 
	Denote $u=f^k(c_+)$, $v=f^m(c_-)$ and $\hat{u}=\hat{f}^k(c_+)$, $\hat{v}=\hat{f}^m(c_-)$
	Then the following conditions hold:
	\begin{enumerate}
		\item\label{lem:inv:1} The set $\hat{F}_g$ is completely invariant if and only if $\orb(u)\cap (u,v)\neq \emptyset$ and $\orb(v)\cap (u,v)\neq \emptyset$;
		\item\label{lem:inv:2}  If $\hat{F}_g$ is completely invariant, then the following assertions are true:
		\begin{enumerate}
			\item\label{lem:inv:2.1} $\bigcup_{n\geq 0}\hat{f}^n((\hat{u},\hat{v}))\subset \hat{F}_g$;
			\item\label{lem:inv:2.2}  for every $x\in[0,u)$, $y\in(v,1]$ we have $\orb(x)\cap (u,1]\neq \emptyset$ and $\orb(y)\cap [0,v)\neq \emptyset$ ;
			\item\label{lem:inv:2.3}  $f(x)\neq x$ for every $x\in [0,1]$;
			\item\label{lem:inv:2.4}  if $f(u)\in(u,v)$ or $f(v)\in(u,v)$, then $\hat{F}_g=\mathbb{X}$.
		\end{enumerate}
	\end{enumerate}
\end{lem}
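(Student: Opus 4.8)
The plan is to move the whole statement to the induced map $\hat{f}$ on $\mathbb{X}$ and to use that $\hat{F}_g=\bigcup_{n\geq 0}\hat{f}^{-n}((\hat{u},\hat{v}))$. Two facts will be used repeatedly: $\hat{f}^{-1}(\hat{F}_g)\subseteq\hat{F}_g$ holds automatically (if an iterate of $\hat{f}(x)$ lands in $(\hat{u},\hat{v})$, so does one of $x$), and $\hat{f}$ is surjective (as $\hat{f}(c_-)=1$, $\hat{f}(c_+)=0$, and the two branches cover the rest). Hence $\hat{F}_g$ is completely invariant if and only if $\hat{f}(\hat{F}_g)\subseteq\hat{F}_g$, and since $\hat{f}(\hat{f}^{-n}((\hat{u},\hat{v})))\subseteq\hat{f}^{-(n-1)}((\hat{u},\hat{v}))\subseteq\hat{F}_g$ for $n\geq 1$, this is equivalent to $\hat{f}((\hat{u},\hat{v}))\subseteq\hat{F}_g$. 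Finally, $\hat{f}$ is increasing on $[0,c_-]$ and on $[c_+,1]$ with $\hat{f}(c_-)=1$ and $\hat{f}(c_+)=0$, so $\hat{f}((\hat{u},\hat{v}))=\hat{f}((\hat{u},c_-])\cup\hat{f}([c_+,\hat{v}))=(\hat{f}(\hat{u}),1]\cup[0,\hat{f}(\hat{v}))$.

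For \ref{lem:inv:1} I would first observe that the open parts $(\hat{f}(\hat{u}),1)$ and $(0,\hat{f}(\hat{v}))$ automatically lie in $\hat{F}_g$: if $y=\hat{f}(x)$ with $x\in(\hat{u},c_-)$, then $\hat{f}^{m-1}(y)=\hat{f}^{m}(x)=g(x)$, and $g(x)\in(\hat{u},\hat{v})$ because $g\colon[u,v]\to[u,v]$ is a Lorenz map and $x$ lies strictly between the left endpoint $u$ and the critical point $c$; symmetrically on the right with $k$ in place of $m$. Hence $\hat{f}((\hat{u},\hat{v}))\subseteq\hat{F}_g$ if and only if $0\in\hat{F}_g$ and $1\in\hat{F}_g$. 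It then remains to identify the latter with the stated condition. If $\hat{F}_g$ is completely invariant it is forward invariant, so $0\in\hat{F}_g$ forces $\hat{u}=\hat{f}^{k-1}(0)\in\hat{F}_g$ and $1\in\hat{F}_g$ forces $\hat{v}=\hat{f}^{m-1}(1)\in\hat{F}_g$, which via $\hat{F}_g=\pi^{-1}(F_g)$ (Remark~\ref{rem:FgJg}) says exactly $\orb(u)\cap(u,v)\neq\emptyset$ and $\orb(v)\cap(u,v)\neq\emptyset$. Conversely $\orb(u)\cap(u,v)\neq\emptyset$ gives $\hat{u}\in\hat{F}_g$, and since $\orb(\hat{u})\subseteq\orb(0)$ (because $\hat{u}=\hat{f}^{k-1}(0)$) this yields $0\in\hat{F}_g$; likewise $1\in\hat{F}_g$, whence $\hat{F}_g$ is completely invariant.

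For \ref{lem:inv:2} I would assume $\hat{F}_g$ completely invariant, so $\hat{f}(\hat{F}_g)=\hat{F}_g$ and, by the previous paragraph, $0,1\in\hat{F}_g$. Then \ref{lem:inv:2.1} is immediate from $\hat{f}^{n}((\hat{u},\hat{v}))\subseteq\hat{f}^{n}(\hat{F}_g)=\hat{F}_g$. For \ref{lem:inv:2.3}: the map $x\mapsto f(x)-x$ is strictly increasing on each of $[0,c)$ and $(c,1]$ (from $\inf f'>1$ and the mean value theorem on the intervals between successive points of non-differentiability), with one-sided values $f(0)\geq 0$ at $0$, $1-c>0$ as $x\to c^-$, $-c<0$ as $x\to c^+$, and $f(1)-1\leq 0$ at $1$; hence the only possible fixed points of $f$ are $0$, present exactly when $f(0)=0$, and $1$, present exactly when $f(1)=1$. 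But $f(0)=0$ would make $\orb(0)=\{0\}$, contradicting $0\in\hat{F}_g$, and symmetrically $f(1)=1$ is excluded — so $f$ has no fixed point. For \ref{lem:inv:2.2}: if $\orb(x)\subseteq[0,u]$ for some $x\in[0,u)$, then $\orb(\hat{x})\subseteq[0,\hat{u}]\subseteq[0,c_-]$, on which $\hat{f}$ is increasing, so this orbit is monotone and bounded, hence converges to a point fixed by $\hat{f}$, and thus by $f$ — impossible by \ref{lem:inv:2.3}; therefore $\orb(x)\cap(u,1]\neq\emptyset$, and symmetrically $\orb(y)\cap[0,v)\neq\emptyset$ for $y\in(v,1]$.

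The main obstacle is \ref{lem:inv:2.4}; here I would argue by contradiction. Assume $f(u)\in(u,v)$ (the case $f(v)\in(u,v)$ is symmetric) and $\hat{J}_g=\mathbb{X}\setminus\hat{F}_g\neq\emptyset$. Since $\hat{f}(\hat{u})\in(\hat{u},\hat{v})$ we get $\hat{u}\in\hat{F}_g$, and $\hat{v}\in\hat{F}_g$ by \ref{lem:inv:1}, so $[\hat{u},\hat{v}]\subseteq\hat{F}_g$; as $\hat{J}_g$ is forward invariant, the orbit of any $z\in\hat{J}_g$ stays in $[0,\hat{u})\cup(\hat{v},1]$. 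The decisive step: if $\hat{f}^{n}(z)\in[0,\hat{u})$, then, using that $\hat{f}$ is increasing on $[0,c_-]$ and that $\hat{f}(\hat{u})<\hat{v}$ (as $f(u)<v$), we get $\hat{f}^{n+1}(z)<\hat{f}(\hat{u})<\hat{v}$, which forces $\hat{f}^{n+1}(z)\in[0,\hat{u})$ as well; so once the orbit of $z$ enters $[0,\hat{u})$ it never leaves — contradicting \ref{lem:inv:2.2}. If instead $z\in(\hat{v},1]$, then \ref{lem:inv:2.2} drives its orbit into $[0,\hat{v})$, hence into $[0,\hat{u})$, and the same contradiction arises. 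Thus $\hat{J}_g=\emptyset$, i.e.\ $\hat{F}_g=\mathbb{X}$. The most delicate point throughout is the book-keeping at doubled points — when $u$, $v$, or a relevant orbit meets $c$, cf.\ Remark~\ref{rem:renorm} — and running the whole argument on $\mathbb{X}$ via $\hat{f}$ (and Remark~\ref{rem:FgJg}) is precisely what keeps this under control.
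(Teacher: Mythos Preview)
Your argument is correct and covers all four parts, but it is organized differently from the paper's proof in two places worth noting.

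For part~\eqref{lem:inv:1}, the paper fixes an arbitrary $x\in\hat F_g$ and, assuming the orbit conditions on $u$ and $v$, chases $\hat f(x)$ back into $(\hat u,\hat v)$ by repeatedly applying the renormalization $g$ and the return indices $k_u,k_v$; the converse is handled as you do. Your reduction is tighter: you first observe that forward invariance of $\hat F_g$ is equivalent to the single inclusion $\hat f((\hat u,\hat v))\subset\hat F_g$, then dispose of the open parts $(\hat f(\hat u),1)$ and $(0,\hat f(\hat v))$ in one stroke using that $g$ is a Lorenz map on $[u,v]$, leaving only the question whether $0,1\in\hat F_g$. This is a genuinely cleaner decomposition, and the equivalence with the orbit conditions on $u,v$ then follows directly from $\hat u=\hat f^{k-1}(0)$, $\hat v=\hat f^{m-1}(1)$.

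For part~\eqref{lem:inv:2.2}, the paper uses an explicit index bound: it argues that the first time the orbit of $x\in[0,u)$ leaves $[0,u]$ must occur before step $k-1$ by monotonicity (comparing with the orbit of $0$), treating $x=0$ separately via \eqref{lem:inv:1}. Your route---monotone orbit in $[0,c_-]$ forces convergence to a fixed point of $\hat f$, contradicting \eqref{lem:inv:2.3}---is shorter and avoids the case split; note that it requires you to establish \eqref{lem:inv:2.3} before \eqref{lem:inv:2.2}, which you do. For \eqref{lem:inv:2.4} the paper argues directly that $(\hat u,1]\subset\hat F_g$ and then invokes \eqref{lem:inv:2.2}, whereas you run a trapping argument on $\hat J_g$; both are essentially the same observation that under $f(u)\in(u,v)$ no orbit can remain in $[0,\hat u)$ forever.
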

\begin{proof}
	\eqref{lem:inv:1}:
	Directly from the definition we have
	$$
	\hat{f}^{-1}(\hat{F}_g)=\bigcup_{n\geq 0}\hat{f}^{-n-1}((\hat{u},\hat{v}))=\bigcup_{n> 0}\hat{f}^{-n}((\hat{u},\hat{v}))\subset \hat{F}_g
	$$
	so only forward invariance may be a problem.
	
	Let us first assume that there are $k_u,k_v>0$ such that $f^{k_u}(u)\in(u,v)$ and $f^{k_v}(v)\in(u,v)$. We also assume that $k_u,k_v$
	are the smallest such numbers.
	Fix any $x\in \hat{F}_g$ and $n\geq 0$ such that $\hat{f}^n(x)\in (\hat{u},\hat{v})$. If $c\in \orb(\pi(x))$
	then there is $s$ such that $\hat{f}^s(x)=\hat{u}$ or $\hat{f}^s(x)=\hat{v}$. But then $\hat{f}^{s+k_u}(x)\in  (\hat{u},\hat{v})$ or $\hat{f}^{s+k_v}(x)\in (\hat{u},\hat{v})$.
	This implies that $\hat{f}(x)\in \hat{F}_g$
	
	So let us assume that $c\not\in \orb(\pi(x))$.
	By definition $g(f^n(\pi(x)))\in [u,v]$. If $g(f^n(\pi(x)))=u$ then $f^{k_u}(g(f^n(\pi(x))))\in (u,v)$ and if $g(f^n(\pi(x)))=v$ then $f^{k_v}(g(f^n(\pi(x))))\in (u,v)$.
	In any case, there is $s>0$ such that $f^{n+s}(\pi(x))\in (u,v)$. But then $f(\pi(x))\in F_g$. 
	Note that if $c\not\in\orb(\pi(x))$ then $\pi^{-1}(f^i(\pi(x)))=\{\hat{f}^i(x)\}$
	and so clearly $\orb(\hat{f}(x))\cap (\hat{u},\hat{v})\neq \emptyset$, therefore $\hat{f}(x)\in \hat{F}_g$.
	
	In both cases $\hat{f}(x)\in \hat{F}_g$, showing that $\hat{f}(\hat{F}_g)\subset \hat{F}_g$.
	We have just proved that $\hat{F}_g$ is completely invariant.
	
	For the converse implication, let us assume that $\hat{F}_g$ is completely invariant. Then $c_+,c_-\in \hat{F}_g$ and as a result $0,1\in \hat{F}_g$, therefore also $\hat{u},\hat{v}\in \hat{F}_g$
	since $\hat{u}=\hat{f}^{k-1}(0)$ and $\hat{v}=\hat{f}^{m-1}(1)$. This implies that there are $i,j>0$ such that $\hat{f}^i(\hat{u}),\hat{f}^j(\hat{v})\in (\hat{u},\hat{v})$.
	Note that the situation $f(\pi(y))\neq \pi(\hat{f}(y))$ can only happen when $\pi(y)=c$. But this implies that $\orb(u)\cap (u,v)\neq \emptyset$ and $\orb(v)\cap (u,v)\neq \emptyset$.

\eqref{lem:inv:2}: 
Condition \eqref{lem:inv:2.1} is a straightforward consequence of the definition. Condition \eqref{lem:inv:2.2} is a simple consequence of monotonicity.
To see this, fix any $x\in(0,u)$, $y\in(v,1)$  and denote $i=N((0,x))$ and $j=N((y,1))$. By the definition of $N(U)$ we obtain that $f^i(x)>c>u$ and $f^j(y)<c<v$.
But if $x=0$ then similar argument to the above yields that $f^{k-1}(0)=u$ and the proof is completed by \eqref{lem:inv:1}.

For the proof of \eqref{lem:inv:2.3} note that since $f$ is expanding with two pieces of monotonicity, its graph can cross diagonal only at points $0,1$.
But if $0$ is fixed point then $u=f^{k-1}(0)=0$ and so $\orb(u)\cap (u,v)=\emptyset$ contradicting \eqref{lem:inv:1}. By symmetric argument $1$ is also not a fixed point.

Finally, for the proof of \eqref{lem:inv:2.4} assume that $f(u)\in (u,v)$. Then $\hat{f}(\hat{u})\in (\hat{u},\hat{v})$ and so
$$
(\hat{u},1]=(\hat{u},\hat{v})\cup(\hat{f}(\hat{u}),\hat{f}(c_-)]\subset(\hat{u},\hat{v})\cup \hat{f}((\hat{u},\hat{v}))\subset \hat{F}_g. 
$$
Next, fix any $\hat{x}\in [0,\hat{u})$. Then $x=\pi(\hat{x})\in [0,u]$. But then \eqref{lem:inv:2.2} implies that $\orb(u)\cap (u,1]\neq \emptyset$.
If $\orb(\hat{u})\cap \{c_-,c_+\}=\emptyset$ then we have $\orb(\hat u)\cap (\hat{u},1]\neq \emptyset$. But  $ \{c_-,c_+\}\subset (\hat u,1]$ so $\orb(\hat u)\cap (\hat{u},1]\neq \emptyset$ also when $\orb(\hat{u})\cap \{c_-,c_+\}\neq \emptyset$.
Since $\hat{F}_g$ is completely invariant, we obtain $\hat{F}_g=\mathbb{X}$. 
The case $f(v)\in (u,v)$ is dealt the same way.
The proof is completed.
\end{proof}

\begin{rem}\label{cor:Fg:leo}
If $f$ is an expanding Lorenz map and $g = (f^m, f^k)$ is a renormalization of $f$ then the set $F_g$ is dense. This follows directly from the fact that $f$ is expanding and $c\in (u,v)$.
\end{rem}

Renormalizations can be to some extent understood by analyzing kneading invariant, which we briefly recall below.
Let $f$ be an expanding Lorenz map and let $\hat{f}$ be the map induced on $\mathbb{X}$. For each $x\in\mathbb{X}$ we define the \textit{kneading sequence} $k(x)\in\{0,1\}^{\mathbb{N}_0}$ by
$$
k(x)_i=
\begin{cases}
1;&\text{if }\hat{f}^i(x)\geq c_+\\
0;&\text{if }\hat{f}^i(x)\leq c_-
\end{cases},
\quad\text{for}\quad i\in\mathbb{N}_0.
$$
The \textit{kneading invariant} of the map $f$ is the pair $k_f=(k_+,k_-)$, where $k_+=k(c_+)$ and $k_-=k(c_-)$.

Let $\sigma$ be the shift map defined on $\{0,1\}^{\mathbb{N}_0}$ and equip $\{0,1\}^{\mathbb{N}_0}$ with the lexicographic order, i.e. for $\xi=\xi_0\xi_1\ldots$, $\eta=\eta_0\eta_1\ldots\in\{0,1\}^{\mathbb{N}_0}$ we have $\xi\prec\eta$ if and only if there exists $i\in\mathbb{N}_0$ such that $\xi_{i}<\eta_{i}$ and $\xi_j=\eta_j$ for $j<i$. Moreover, $\xi\preccurlyeq\eta$ means $\xi\prec\eta$ or $\xi=\eta$. The main result in \cite{HubSpar} states the following.
\begin{thm}\label{thm:kneading}
	If $f$ is an expanding Lorenz map, then the kneading invariant $k_f=(k_+,k_-)$ satisfies
	\begin{equation}\label{eq:kneading}
	\sigma(k_+)\preccurlyeq\sigma^n(k_+)\prec\sigma(k_-)\quad\text{and}\quad\sigma(k_+)\prec\sigma^n(k_-)\preccurlyeq\sigma(k_-)
	\end{equation}
	for all $n\in\mathbb{N}$.
	
	Conversely, given any two sequences $k_+$ and $k_-$ satisfying (\ref{eq:kneading}), there exists an expanding Lorenz map $f$ with $(k_+,k_-)$ as its kneading invariant, and $f$ is unique up to conjugacy by a homeomorphism of $[0,1]$.
\end{thm}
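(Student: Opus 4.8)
The statement splits into a \emph{necessity} part (the kneading invariant of an expanding Lorenz map obeys \eqref{eq:kneading}) and a \emph{realization} part (every such pair is realized, uniquely up to conjugacy), and I would treat them separately. For necessity the engine is that the itinerary map $x\mapsto k(x)$ is order preserving from $(\mathbb{X},<)$ to $(\{0,1\}^{\mathbb{N}_0},\preccurlyeq)$: if $x<y$, $k(x)\neq k(y)$ and $i$ is the first index where they differ, then, $\hat f$ being strictly increasing on $\{z\le c_-\}$ and on $\{z\ge c_+\}$, an induction on $j\le i$ gives $\hat f^j(x)\le\hat f^j(y)$, and since the $i$-th symbols differ this forces $\hat f^i(x)\le c_-<c_+\le\hat f^i(y)$, i.e.\ $k(x)\prec k(y)$. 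By conditions (1)--(2) of an expanding Lorenz map, $\hat f(c_+)=0=\min\mathbb{X}$ and $\hat f(c_-)=1=\max\mathbb{X}$, so $\sigma(k_+)=k(0)$ and $\sigma(k_-)=k(1)$ are the lexicographically least and greatest itineraries; combined with $\sigma^n(k_\pm)=k(\hat f^n(c_\pm))$ and the monotonicity above, this already yields the non-strict forms of all four inequalities in \eqref{eq:kneading}.

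To upgrade these to the strict inequalities it suffices to know that the itinerary map is injective and that $\hat f^n(c_+)\neq 1$, $\hat f^n(c_-)\neq 0$ for $n\ge 1$. Injectivity is forced by the density of $\bigcup_{n\ge 0}f^{-n}(c)$: if $x<y$ had $k(x)=k(y)$, then $(\pi(x),\pi(y))$ is a non-degenerate interval (equal itineraries force $\pi(x)\neq\pi(y)$) which must avoid $\bigcup_{n\ge 0}f^{-n}(c)$ --- a point $z$ of this set inside it would split into $z_-<z_+$ with $k(z_-)\prec k(z_+)$, contradicting $k(x)\preccurlyeq k(z_-)$ and $k(z_+)\preccurlyeq k(y)=k(x)$ --- contradicting density. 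For the orbit statement, orientation preservation of $f$ makes $\hat f$ map the set $S$ consisting of the right endpoints of holes, the point $0$, and the genuine interior points into itself; since $1\notin S$ while $c_+\in S$ and the orbit of $c_+$ begins $c_+,0,\dots$, that orbit stays in $S$ and never reaches $1$, and symmetrically the orbit of $c_-$ never reaches $0$. Injectivity then converts $\sigma^n(k_+)\preccurlyeq\sigma(k_-)$ and $\sigma^n(k_-)\succcurlyeq\sigma(k_+)$ into the strict inequalities that are needed.

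For realization I would first build a combinatorial model. Normalising $(k_+)_0=1$, $(k_-)_0=0$, set
$$
\Sigma=\bigl\{\xi\in\{0,1\}^{\mathbb{N}_0}:\ \forall n\ \bigl(\xi_n=1\Rightarrow\sigma^n(\xi)\succcurlyeq k_+\bigr)\ \text{and}\ \bigl(\xi_n=0\Rightarrow\sigma^n(\xi)\preccurlyeq k_-\bigr)\bigr\}.
$$
A short lexicographic computation from \eqref{eq:kneading} shows that $\Sigma$ is non-empty, closed, $\sigma$-invariant and contains $k_+$ and $k_-$; moreover $k_-$ is the largest element of $\Sigma$ starting with $0$ and $k_+$ the smallest starting with $1$, and the $\prec$-adjacent pairs of $\Sigma$ are precisely those of the form $\{wk_-,wk_+\}$ for admissible words $w$. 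Collapsing all these pairs in the lexicographically ordered $\Sigma$ produces a compact connected linearly ordered space order-isomorphic to $[0,1]$, on which $\sigma$ descends to a map $\bar f$ that is continuous and strictly increasing on either side of the point $\bar c$ arising from $\{k_-,k_+\}$, with $\bar f(\bar c^-)=\max$ and $\bar f(\bar c^+)=\min$; thus $\bar f$ is a topological Lorenz map with kneading invariant $(k_+,k_-)$. Finally $\bar f$ has no interval on which all iterates are continuous (equivalently, the preimages of $\bar c$ are dense), so the Milnor--Thurston/Parry reduction of $\bar f$ to constant slope $e^{\htop(\bar f)}$ collapses nothing, i.e.\ is a genuine conjugacy, and since $\htop(\bar f)>0$ the resulting piecewise-linear model has slope $>1$, i.e.\ is an expanding Lorenz map realizing $(k_+,k_-)$.

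Uniqueness is then comparatively soft: if $f,g$ are expanding Lorenz maps with $k_f=k_g=(k_+,k_-)$, the necessity part makes each itinerary map an order-preserving injection with closed $\sigma$-invariant image, and one shows that for an expanding map this image depends only on $(k_+,k_-)$; then $\iota_g^{-1}\circ\iota_f$ is an order isomorphism $\mathbb{X}_f\to\mathbb{X}_g$ intertwining $\hat f$ and $\hat g$, which descends through the projections $\pi$ to an increasing homeomorphism of $[0,1]$ conjugating $f$ and $g$ (the conjugacy relation need be checked only off the countably many hole-points and extends by continuity). The main obstacle is the realization, and within it the passage from the purely combinatorial $\bar f$, which a priori is only a topological Lorenz map, to a representative with $\inf f'>1$: this is exactly where one must invoke the constant-slope reduction and, in particular, establish positivity of entropy; the bookkeeping with the doubling space $\mathbb{X}$ and the hole endpoints $c_\pm$ throughout is routine but must be carried out with some care.
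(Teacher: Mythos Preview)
The paper does not supply a proof of this theorem; it is quoted verbatim as ``the main result in \cite{HubSpar}'' (Hubbard--Sparrow), so there is nothing in the paper to compare your argument against. Your necessity argument is correct and is the standard one: order-preservation and injectivity of the itinerary map, together with the observation that the $\hat f$-orbit of $c_+$ (respectively $c_-$) never reaches $1$ (respectively $0$), yield precisely the inequalities \eqref{eq:kneading}. The bookkeeping with the set $S$ of ``right-type'' points is a clean way to handle the last step.

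There is, however, a genuine gap in your realization argument, at exactly the point you yourself flag as ``the main obstacle''. You assert ``since $\htop(\bar f)>0$'' without justification, and in fact this can fail: take $(k_+,k_-)$ to be the kneading pair of the rigid rotation $x\mapsto x+\alpha\pmod 1$ with $\alpha$ irrational. One checks directly that this pair satisfies \eqref{eq:kneading} with all the required strict inequalities (the orbits of $0$ and $1$ merge after one step at the single point $\alpha$, which is not a preimage of $c$ and never returns to $0$ or $1$), yet $\htop=0$ and no conjugate of a rotation can have $\inf f'>1$, since topological entropy is a conjugacy invariant. Your quotient construction of $\bar f$ from the admissible subshift $\Sigma$ is the correct route to what Hubbard and Sparrow actually prove, namely realization by a \emph{topologically expansive} Lorenz map (one for which $\bigcup_n f^{-n}(c)$ is dense); the paper's restatement using ``expanding'' in the sense $\inf f'>1$ is a slight imprecision, and the upgrade you attempt via the Parry constant-slope model simply cannot be carried out in the zero-entropy case. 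So the gap is not a missing technical step but an overreach of the stated conclusion.
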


A kneading invariant $k_f=(k_+,k_-)$ is \textit{renormalizable} if we can write it in the following form:
\begin{equation}
\label{eq:renormkneading1}
\begin{cases}
k_+=w_+(w_-)^\infty\quad\text{or}\quad k_+=w_+(w_-)^{p_1}(w_+)^{p_2}\ldots,\\
k_-=w_-(w_+)^\infty\quad\text{or}\quad k_-=w_-(w_+)^{m_1}(w_-)^{m_2}\ldots,
\end{cases}
\end{equation}
where $p_i$, $m_i\in\N$, $i=1,2,\ldots$ and the words $w_-\in\{0,1\}^l$, $w_+\in\{0,1\}^r$ have finite lengths $l,r>1$. We say that a kneading invariant $k_f=(k_+,k_-)$ is \textit{trivially renormalizable} if it can be written in the form~\eqref{eq:renormkneading1} with the words 
$$(w_+,w_-)=(10,0)\quad\text{or}\quad(w_+,w_-)=(1,01).$$

In \cite{GH} the authors claim that the kneading invariant of $f$ is renormalizable by words $w_-$ and $w_+$ of lengths $l$ and $r$ respectively if and only if the map $g=(f^l,f^r)$ is a renormalization of $f$ (see p.1004). As we will later see in Examples \ref{51from4} and \ref{ex:3}, this equivalence can hold only if we adopt definitions described in Remark \ref{rem:renorm}, since original values of $f$ defining $g$ may be inappropriate for the definition of Lorenz map.

It may happen that the kneading invariant of an expanding Lorenz map $f\colon[0,1]\to[0,1]$ is trivially renormalizable, but the corresponding map $g=(f^2,f)$ (or $g=(f,f^2)$, respectively) is not a trivial renormalization in the sense of the Definition~\ref{defn:renormalization}. This situation occurs when the point $0$ or $1$ is fixed for the map $f$. In this case the map $g\colon[u,v]\to[u,v]$ is a well-defined expanding Lorenz map, but the interval $[u,v]$ is not a proper subinterval of $[0,1]$. This type of maps is a special case of so-called \textit{special trivial renormalizations} or STR, which are considered in \cite{GS} (cf. \cite[Definition~1.3]{OPR}).
	
\begin{defn}\label{defn:STR}
Let $f\colon[0,1]\to[0,1]$ be an expanding Lorenz map with trivially renormalizable kneading invariant. If at least one of the following conditions holds:
$$
f(0)=0\quad\text{or}\quad f(0)=c\quad\text{or}\quad f(1)=1\quad\text{or}\quad f(1)=c,
$$
then we say that $f$ is \emph{special trivial renormalizable} (\emph{STR} for short). In this case the corresponding map $g=(f^2,f)$ (or $g=(f,f^2)$) is called a \emph{special trivial renormalization} of $f$.
\end{defn}

Let us note that the STR property is related to trivially renormalizable kneading invariants with $k_+=w_+(w_-)^\infty$ or $k_-=w_-(w_+)^\infty$. Special trivial renormalizations are important for our considerations because they can occur in the decomposition process of a renormalization, as the Example~\ref{51from4} illustrates.

\section{Results related to Theorem~\ref{thm:A}}\label{sec:ThmA}

Before proceeding further, let us recall Example~5.1 from \cite{OPR}.

\begin{exmp}\label{51from4}
	Let $f$ be the expanding Lorenz map
	$$
	f\colon [0,1]\ni x\mapsto h(x) (\text{mod }1) \in [0,1], 
	$$ 
 	where $h\colon[0,1]\to[0,2]$, $h(x)=\beta x+\alpha$ satisfies:
	\begin{enumerate}
		\item $h^4(0)=1$,
		\item $h(1)-1=h^2(0)$.
	\end{enumerate}
	These conditions lead to the equations
	\begin{equation}\label{eq:exBetaAlpha}
	\alpha=1-\frac{1}{\beta}\quad\text{and}\quad\beta^4-\beta-1=0,
	\end{equation}
	which are satisfied for $\beta_0\approx 1.2207440846$ and $\alpha_0=1-1/\beta_0\approx 0.1808274865$.
	Then $c=\frac{1}{\beta_0^2}$ and two orbits of $f$ are presented on
	Figure~\ref{fig_51}, where $q_i=f^{i-1}(1_-)$, $p_i=f^{i-1}(0_+)$ and $c$ is depicted as red dot.

\begin{figure}[ht]
	\begin{tikzpicture}
	\draw[black, thick] (-6,0) -- (6,0);
	\filldraw [black] (12*0.005-6,0) circle (1.5pt) node[anchor=north] [blue] {$p_1$};
	\filldraw [black] (12*0.18-6,0) circle (1.5pt) node[anchor=north] [blue] {$p_2$};
	\filldraw [black] (12*0.40-6,0) circle (1.5pt) node[anchor=north] [blue] {$p_3$};
	\filldraw [black] (12*0.67-6,0) circle (1.5pt) node[anchor=north] [blue] {$p_4$};
	\filldraw [black] (12*0.995-6,0) circle (1.5pt) node[anchor=south] [orange] {$q_1$};
	\filldraw [black] (12*0.40-6,0) circle (1.5pt) node[anchor=south] [orange] {$q_2$};
	\filldraw [black] (12*0.67-6,0) circle (1.5pt) node[anchor=south] [orange] {$q_3$};
	\filldraw [red] (12*0.67-6,0) circle (1.5pt);
	\end{tikzpicture}
	\caption{Relation between points $p_i$, $q_i$ and $c$ from Example~\ref{51from4}.} 
	\label{fig_51}
\end{figure}
	
	It is not hard to see from Figure~\ref{fig_51} that $f$ is renormalizable with renormalization 
	\begin{equation*}
	g(x)=\begin{cases}f^4 (x),\,&\text{if 
		$x\in\left[f^2 (0),c\right)$,}\\
	f^3 (x),\,&\text{if $x\in (c,1],$}\end{cases}
	\end{equation*}
	and is locally eventually onto at the same time.
	
Next, observe that the kneading invariant of the map $f$ is
$$
k_{f}=\left((1000)^\infty,(010)^\infty\right)=\left(100(0100)^\infty,0100(100)^\infty\right).
$$
The renormalization $g=(f^4,f^3)$ corresponds to the renormalization of $k_f$ by words $w_-=0100$ and $w_+=100$. However, the kneading invariant $k_f$ can be also twice trivially renormalized by words $w_-^1=0$ and $w_+^1=10$. After the first trivial renormalization we obtain the map $f_1=(f,f^2)$ with the kneading invariant $k_{f_1}=((100)^\infty,(01)^\infty)$, and after the second one we get the map $f_2=(f_1,f_1^2)$ with $k_{f_2}=((10)^\infty,01^\infty)$. The kneading invariant $k_{f_2}$ is again trivially renormalizable by words $w_-^2=01$ and $w_+^2=1$, which leads to the map $f_3=(f_2^2,f_2)$ with $k_{f_3}=\left(10^\infty,01^\infty\right)$. Note that $g=f_3$, so the renormalization $g$ is the composition of two trivial and one special trivial renormalizations.
\end{exmp}

\begin{rem}
Note that the second equation of (\ref{eq:exBetaAlpha}) has two other roots $\beta_1$ and $\beta_2$ such that
$$
|\beta_1|=|\beta_2|\approx 1.06334,
$$
which implies that $\beta_0$ is algebraic but non-Pisot and non-Salem number. As we mentioned in introduction, it solves a question from \cite{Bru1}.
\end{rem}

\begin{rem}\label{rem:thma}
	Expanding Lorenz map in Example~\ref{51from4} is locally eventually onto however posses a renormalization (see also \cite[Remark~5.1]{OPR}).
	But if we denote by $g$ this renormalization, then $F_g=[0,1]$.
	It is a consequence of Theorem~\ref{thm:28} proved later in the paper, however can be easily verified directly.
	This shows that Theorem~\ref{thm:A}\eqref{thm:A:2} is not true.
	
	Let us also highlight the fact that by Corollary~\ref{cor:TransInv} this map does not have any proper completely invariant and closed subset.
\end{rem}

\begin{rem}\label{rem:thm:A}
	Theorem~\ref{thm:A}\eqref{thm:A:1} holds when $E\subset(0,c)\cup(c,1)$ since in this case we may view $E$ as an invariant set for $\hat{f}$,  see Remark~\ref{rem:cominv}.
\end{rem}

Lack of transitivity can be connected in Lorenz maps with so-called primary $n(k)$-cycles (see \cite{Glen}). A sequence $\{z_j : j\in\{0,\ldots, n-1\}\}$ is an \textit{$n(k)$-cycle} if its points satisfy:
\begin{enumerate}
\item $z_0 < z_1 <\dots < z_{n-k-1} < c < z_{n-k}<\dots <z_{n-1}$;
	\item $f(z_j)=z_{j+k (\text{mod }n)}$ for all $j=0,1,\ldots,n-1$;
	\item the integers $k$ and $n$ are coprime.
\end{enumerate}
If additionally
\begin{enumerate}\setcounter{enumi}{3}
	\item $z_{k-1}\leq f(0)$ and $f(1)\leq z_k$
\end{enumerate}
then the $n(k)$-cycle is \textit{primary}. Note that any $n(k)$-cycle is by definition an $n$-periodic orbit.
One of the main result of \cite{OPR} (cf. \cite{Glen}) states that expanding Lorenz map $f$ with primary $n(k)$-cycle is not transitive
provided that $f(0)\neq z_{k-1}$ or $f(1)\neq z_{k}$. This gives a chance that results of Theorem~\ref{thm:A}\eqref{thm:A:2} may hold at least partially. Appropriate result is stated below.
Let us also stress the fact, that even if $F_g$ is proper subset of $[0,1]$ it may happen that $R_{J_g}f\neq g$ because we cannot exclude the situation that a few different renormalizations define the same set $F_g$ (e.g. see Example~\ref{ex:3} below). In that sense, situation provided by Theorem~\ref{thm:cycle} is special.

\begin{thm}\label{thm:cycle}
	\label{tw-n(k)-cykle}
	Let $f$ be an expanding Lorenz map with a primary $n(k)$-cycle
	$$
	z_0<z_1<\dots<z_{n-k-1}<c<z_{n-k}<\dots<z_{n-1}.
	$$
Then the following conditions hold:
	\begin{enumerate}
		\item\label{tw-n(k)-cykle:1} the following $g\colon [u,v]\to[u,v]$ provided below is a well defined expanding Lorenz map which additionally is a renormalization of $f$:
		$$
		g(x)=\begin{cases}
		f^n(x); & x\in[u,c)\\
		f^n(x); & x\in(c,v]\\
		\end{cases},
		$$
		where $[u,v]:=[f^{n-1}(0),f^{n-1}(1)]$.
	\item\label{tw-n(k)-cykle:min} if $\tilde{g}=(f^l,f^r)$ is a renormalization of $f$ and at least one of the numbers $l$ and $r$ is greater or equal to $n$, then $n$ divides both $l$ and $r$.
		\item\label{tw-n(k)-cykle:2}\label{thm:cycle:2} if $z_{k-1}=f(0)$ or $z_k=f(1)$, then $\hat{F}_g$ is not completely invariant.
		\item\label{tw-n(k)-cykle:3} if $z_{k-1}\neq f(0)$ and $z_k\neq f(1)$ then:
		\begin{enumerate} 
			\item\label{tw-n(k)-cykle:3.1} $J_g$ is a completely invariant proper subset of $[0,1]$.
			\item\label{tw-n(k)-cykle:3.2} $z_{n-k-1} =\sup\{x\in J_g, x<c\}$ and $z_{n-k} =\inf\{x\in J_g, x>c\}$.
			\item\label{tw-n(k)-cykle:3.3} $R_{J_g}f=g$
		\end{enumerate}
	\end{enumerate}
\end{thm}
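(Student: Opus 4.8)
The plan is to work in the doubled space $\mathbb{X}$ throughout, since complete invariance is only unambiguous there, and to exploit the combinatorial rigidity of a primary $n(k)$-cycle. First I would set up the basic picture: the cycle points satisfy $f(z_j)=z_{j+k\pmod n}$, so $f^{n-1}(z_{k-1})=z_{n-1}$ and $f^{n-1}(z_k)=z_0$, and because the cycle is primary we have $z_{k-1}\le f(0)$ and $f(1)\le z_k$; hence $u=f^{n-1}(0)$ lies between $z_{n-1}$'s orbit position and $c$, and $v=f^{n-1}(1)$ lies between $c$ and that of $z_0$. For part \eqref{tw-n(k)-cykle:1} I would verify that $f^{n-1}$ is continuous and monotone on $[0,c)$ and on $(c,1]$, so that $f^n$ restricted to $[u,c)$ and to $(c,v]$ is a composition of $f$ (which has the single branch point $c$) with a monotone continuous map, giving two increasing branches; then check $g([u,c))$ and $g((c,v])$ land in $[u,v]$ with the correct boundary behaviour (using Remark~\ref{rem:renorm} for the degenerate cases $f^i(0)=c$), and that the slope estimate $\inf f'>1$ forces $g$ to be expanding on $[u,v]$ after rescaling. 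The coprimality of $k$ and $n$ is what guarantees that the return map really returns after exactly $n$ steps and not a proper divisor, which is the content that makes the two branches have the claimed form.

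For part \eqref{tw-n(k)-cykle:min} the idea is a minimality/gap argument: if $\tilde g=(f^l,f^r)$ is another renormalization with renormalization interval $(\tilde u,\tilde v)\ni c$, then the orbit of the cycle must interact with $(\tilde u,\tilde v)$ in a constrained way, and I would argue that any renormalization with one exponent $\ge n$ must "see" the full $n$-cycle, forcing the first-return time to $(\tilde u,\tilde v)$ to be a multiple of $n$ on each branch; here one uses that the $n(k)$-cycle is primary, so it is the "outermost" periodic structure, together with the fact that first-return times to a renormalization interval are constant on each branch. I expect this to be the most delicate part, because one has to rule out that a renormalization interval sits "between" cycle points in a way that produces a non-multiple-of-$n$ return time; the resolution should come from the kneading-sequence description (Theorem~\ref{thm:kneading}) — a renormalizable kneading invariant is built from blocks $w_\pm$, and when the cycle is primary these blocks must themselves be refinements of the $n$-block, forcing $n\mid l$ and $n\mid r$.

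For part \eqref{tw-n(k)-cykle:2}, if $z_{k-1}=f(0)$ then $z_{k-1}=f(0)$ is on the boundary and one computes that $\hat f(\hat u)=z_{n-1}$ (or a one-sided companion) which lies \emph{outside} $(\hat u,\hat v)$ — in fact $z_{n-1}\ge v$ — and moreover $\orb(\hat u)$ never re-enters $(\hat u,\hat v)$, so $\orb(u)\cap(u,v)=\emptyset$; by Lemma~\ref{lem:inv}\eqref{lem:inv:1} this means $\hat F_g$ is not completely invariant. The case $z_k=f(1)$ is symmetric. For part \eqref{tw-n(k)-cykle:3}, when $z_{k-1}\ne f(0)$ and $z_k\ne f(1)$ the strict inequalities give $z_{k-1}<f(0)$ and $f(1)<z_k$, so $\hat u=\hat f^{k-1}(0)$ does re-enter $(\hat u,\hat v)$ after finitely many steps (trace the orbit of $0$ through the cycle positions), and likewise for $\hat v$; Lemma~\ref{lem:inv}\eqref{lem:inv:1} then gives that $\hat F_g$ is completely invariant, and since $F_g$ is dense (Remark~\ref{cor:Fg:leo}) but the cycle orbit avoids $(u,v)$, $J_g$ is a proper completely invariant set, proving \eqref{tw-n(k)-cykle:3.1}. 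For \eqref{tw-n(k)-cykle:3.2} I would show $z_{n-k-1}$ and $z_{n-k}$ are the cycle points adjacent to $c$ and that every point of $J_g$ lies outside $(z_{n-k-1},z_{n-k})$: a point whose orbit avoids $(u,v)$ must in particular avoid the renormalization interval, and monotonicity of $f$ on the two branches together with $[u,v]\subset(z_{n-k-1},z_{n-k})$ (which follows from the primary condition) pins down these suprema/infima exactly. Finally \eqref{tw-n(k)-cykle:3.3} follows by comparing the formula \eqref{eq:RE} for $R_{J_g}f$ with $g$: having identified $e_-=z_{n-k-1}$, $e_+=z_{n-k}$ and computed $l=N((e_-,c))=r=N((c,e_+))=n$ from the cycle combinatorics, the two maps have the same branch exponents on the same interval, hence coincide.
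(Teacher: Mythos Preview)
Your plan for parts \eqref{tw-n(k)-cykle:2} and \eqref{tw-n(k)-cykle:3} is essentially the paper's argument: in the degenerate case $z_{k-1}=f(0)$ (resp.\ $z_k=f(1)$) the point $u$ (resp.\ $v$) lies on the cycle itself, whose orbit avoids $(u,v)$, so Lemma~\ref{lem:inv}\eqref{lem:inv:1} applies; in the non-degenerate case one checks $g(u),g(v)\in(u,v)$, invokes the same lemma for complete invariance, and then identifies $e_\pm$ and the return times $N((e_-,c))=N((c,e_+))=n$ exactly as you describe.

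There is, however, a genuine gap in your plan for part \eqref{tw-n(k)-cykle:1}. You write that you would ``verify that $f^{n-1}$ is continuous and monotone on $[0,c)$ and on $(c,1]$''. This is false in general: already for $n=3$, $k=1$ one has $f(0)<c$ and $f([0,c))=[f(0),1)\ni c$, so $f^2$ is discontinuous on $[0,c)$. What you actually need is the continuity of $f^n$ on $[u,c)$ and on $(c,v]$, and this requires a different argument. The paper establishes it by tracking images step by step: from $[u,c)\subset[z_{n-k-1},c)$ one gets $f([u,c))\subset[z_{n-1},1)$, then $f^2([u,c))\subset[z_{k-1},z_k]$, and only now does one use that $c\notin f^i([z_{k-1},z_k])$ for $i=0,\ldots,n-3$ (this is where the primary condition enters). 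A separate short argument (using that $f$ is expanding, to rule out an invariant subinterval) is needed to show $f^{n-1}(0)<c<f^{n-1}(1)$ and $f^n(u)\ge u$, $f^n(v)\le v$. Your compositional shortcut does not survive, and coprimality of $k,n$ plays no explicit role in this part.

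For part \eqref{tw-n(k)-cykle:min} you propose going through the kneading invariant and block decompositions. The paper takes a more direct and self-contained route: it first observes that for $x\in[z_{n-k-1},z_{n-k}]$ and $m\ge1$ one has $f^m(x)\in[z_{n-k-1},z_{n-k}]$ iff $n\mid m$, and that there is no matching for the first $n$ iterates. Writing $r=np+i$ with $0\le i<n$ and assuming $i\ne 0$, the paper shows $\tilde v\le z_{n-k}$ (otherwise $c\in f^{np}((c,\tilde v))$ breaks continuity of $f^r$ on $(c,\tilde v)$); this forces $n\mid l$, hence $\tilde u\ge z_{n-k-1}$, hence $n\mid r$, a contradiction. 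Your kneading approach may be made to work, but as written it is a sketch of a hope rather than an argument, and it imports machinery the paper avoids.
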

\begin{proof}
For a better clarity of the presentation, we will consider two cases.
First assume that $c\in [z_{k-1},z_k]$, which means that $n-k=k$, therefore $n=2k$. Since $n,k$ are co-prime, this is equivalent to saying that $k=1$. This means that $f(z_0)=z_1$ and $f(z_1)=z_0$. By definition $f(0)\geq z_0$. We are going to show that $f(0)<c$. If it is not the case, that is $f(0)\geq c$, then
$f((0,z_0))\subset (c,z_1)$ and consequently $f^2((0,z_0))\subset (0,z_0)$ and $f^2$ is monotone on $(0,z_0)$. This contradicts the fact that $f$ is an expanding Lorenz map. Using similar argument for $1$ we obtain that
$$
z_0\leq f(0)<c<f(1)\leq z_1
$$
and therefore, by monotonicity of $f$ on $[0,c)$ and $(c,1]$ we have $f^2(0)\geq z_1$, $f^2(1)\leq z_0$. This gives
\begin{eqnarray}
f ([u,c))&\subset & [f^2(0),1)\subset [z_1,1)\label{ucn2}\\
f ((c,v])&\subset & (0,f^2(1)]\subset (0,z_0].\label{vcn2}
\end{eqnarray}
This shows that $f^2$ is continuous and monotone on both intervals $[u,c)$ and $(c,v]$.
Observe that if $f^3(0)<u$ then $z_0\leq f^3(0)<u$ and so
$$
f^2([z_0,u])=f([z_1,f^2(0)])=[z_0,f^3(0)]\subset [z_0,u].
$$
This is a contradiction with the definition of expanding Lorenz map, since by the above $f$ and $f^2$ are monotone on $[z_0,u]$.
This shows that $f^3(0)\geq u$ and similarly $f^3(1)\leq v$. But then \eqref{ucn2} and \eqref{vcn2} lead to
$$
f^2([u,c))\subset [f^3(0),f(1))\subset [u,v),\quad f^2((c,v])\subset (f(0),f^3(1)]\subset (u,v].
$$
Then $g$ is well defined, while clearly also $g(c_-)=f(1)=v$ and $g(c_+)=f(0)=u$, completing the proof that $g$ is an expanding Lorenz map.
Indeed, $g$ is a renormalization of $f$.

Now consider the second case, $c\not\in [z_{k-1},z_k]$. This in particular means that $n>2$. By the definition of prime $n(k)$-cycle we obtain that 
\begin{eqnarray*}
z_{n-k-1}&=&f^n(z_{n-k-1})=f^{n-1}(z_{n-1})=f^{n-2}(z_{k-1}), \\
z_{n-k}&=&f^n(z_{n-k})=f^{n-1}(z_{0})=f^{n-2}(z_{k}).
\end{eqnarray*}
This shows that $f^{n-2}([z_{k-1},z_k])=[z_{n-k-1},z_{n-k}]\ni c$, and so $c\not\in f^{i}([z_{k-1},z_k])$ for $i=0,1,\ldots,n-3$. By the definition, we also have
$z_{k-1}\leq f(0)<f(1)\leq z_k$ and as a consequence.
\begin{equation}
\label{zk-k}
z_{n-k-1}\leq f^{n-1}(0)<f^{n-1}(1)\leq z_{n-k}.
\end{equation}
We claim that $f^{n-1}(0)<c<f^{n-1}(1)$. Assume first that $c\leq f^{n-1}(0)<z_{n-k}$. Then $f^n(0)<z_0=f^n(z_0)$ and so $z_0>0$.
Then $c\not\in f^i((0,z_0))$ for $i=0,\ldots,n$ and $f^n((0,z_0))\subset (0,z_0)$ which contradicts the fact that $f$ is expanding Lorenz map. By the same argument, condition
 $z_{n-k-1}< f^{n-1}(1)\leq c$ leads to a contradiction, either. Indeed, the claim holds.
Denote $u=f^{n-1}(0)$ and $v=f^{n-1}(1)$. By the previous observations
\begin{eqnarray*}
f([u,c))&=&[f^n(0),1)\subset [z_{n-1},1),\\
f([z_{n-1},1))&\subset& [z_{k-1},f(1))\subset [z_{k-1},z_k],
\end{eqnarray*}
and
\begin{eqnarray*}
	f((c,v])&=&(0,f^n(1)]\subset (0,z_0],\\
	f((0,z_{0}])&\subset& (f(0),z_{k}]\subset [z_{k-1},z_k].
\end{eqnarray*}
This shows that $f^n$ is continuous and monotone on both intervals $[u,c)$ and $(c,v]$. 
Next assume that $f^n(u)<u$. Clearly 
\begin{eqnarray}
f^n([u,c))&\subset& f^n([z_{n-k-1},c))\subset f^{n-1}([z_{n-1},1)) \nonumber\\
&\subset& f^{n-2}([z_{k-1},z_k]) \subset [z_{n-k-1},z_{n-1}] \label{ucfn}
\end{eqnarray}
so in particular $z_{n-k-1}\leq f^n(u)<u<c$. But this implies 
\begin{equation}
f^n((z_{n-k-1},u))\subset (z_{n-k-1},u)\label{znkuinv}
\end{equation}
and $c\not\in f^i((z_{n-k-1},u))$ for $i=0,\ldots,n$ which contradicts the assumption that $f$ is an expanding Lorenz map. This shows that $f^n(u)\geq u$.
By a symmetric argument we obtain that $f^n(v)\leq v$. But then monotonicity implies 
$f^n([u,c))\subset [f^n(u),f^{n-1}(1))\subset [u,v]$ and $f^n((c,v])\subset (f^{n-1}(0),f^n(v)]\subset [u,v]$. Directly from the definition we obtain that $f^n(c_+)=f^{n-1}(0)=u$
and $f^n(c_-)=f^{n-1}(1)=v$, so indeed $g$ is a well defined expanding Lorenz map. This proves \eqref{tw-n(k)-cykle:1}.

At first observe that the inequalities 
	$$z_{k-1}\leq f(0)<f(1)\leq z_k$$
imply $f^i(0)\neq f^i(1)$ for $i=0,1,\dots,n$ (see \eqref{zk-k}), which means that there is no matching for the first $n$ iterations. Now suppose that there is another renormalization $\tilde{g}=(f^l,f^r)$ on an interval $[\tilde{u},\tilde{v}]$ and assume that at least one of the numbers $l$ and $r$ is greater or equal to $n$. Say $r\geq n$, so $r=n\cdot p+i$ for some $p\in\mathbb{N}$ and $i\in\{0,1,\dots,n-1\}$. Next observe that for any point $x\in[z_{n-k-1},z_{n-k}]$ and any number $m\in\mathbb{N}$ we have $f^m(x)\in[z_{n-k-1},z_{n-k}]$ if and only if $n$ divides $m$. Consider the case $i\neq0$. Then we must have $\tilde{v}\in(c,z_{n-k}]$. Indeed, if $\tilde{v}>z_{n-k}$ then we obtain
$$
f^n((c,z_{n-k}))=(f^{n-1}(0),z_{n-k})\supset(c,z_{n-k})
$$
and
$$
c\in f^{n\cdot p}((c,z_{n-k}))\subset f^{n\cdot p}((c,\tilde{v})).
$$
It means that there is a point $y\in(c,\tilde{v})$ such that $f^{n\cdot p}(y)=c$. But then we get the contradiction with continuity of $f^r$, because there is no matching for the first $n$ iterations and hence
$$
\lim\limits_{x\to y-}f^r(x)=\lim\limits_{x\to y-}f^i(f^{n\cdot p}(x))\neq\lim\limits_{x\to y+}f^i(f^{n\cdot p}(x))=\lim\limits_{x\to y+}f^r(x).
$$
Note that condition ${\tilde{v}=f^{l}(c_-)}\in(c,z_{n-k}]$ implies that $l=q\cdot n$ for some $q\in\mathbb{N}$. Moreover $\tilde{u}\in[z_{n-k-1},c)$, because otherwise $\tilde{g}(z_{n-k-1})=f^l(z_{n-k-1})=z_{n-k-1}>\tilde{u}$ and we get the contradiction with topological expanding condition for $\tilde{g}$. But on the other hand ${\tilde{u}=f^{r}(c_+)}\in[z_{n-k-1},c)$ implies that $n$ divides $r$, which contradicts the assumption that $i\neq0$. Therefore the only possibility is $i=0$. In this case we also have $\tilde{v}\in(c,z_{n-k}]$, because otherwise $\tilde{g}(z_{n-k})=f^r(z_{n-k})=z_{n-k}<\tilde{v}$ and we get the contradiction with topological expanding condition for $\tilde{g}$. Finally ${\tilde{v}=f^{l}(c_-)}\in(c,z_{n-k}]$ implies that $n$ divides $l$.

Note that $\orb(z_0)\cap (u,v)=\emptyset$, so if assumptions of \eqref{tw-n(k)-cykle:2} hold, then $u\in \orb(z_0)$ or $v\in \orb(z_0)$ and then Lemma~\ref{lem:inv} implies that $\hat{F}_g$ is not invariant

Finally assume that assumptions in \eqref{tw-n(k)-cykle:3} are satisfied, that is $z_{k-1}\neq f(0)$ and $z_k\neq f(1)$. Then
\eqref{zk-k} implies
$$
z_{n-k-1}< f^{n-1}(0)<f^{n-1}(1)< z_{n-k}
$$
which in other words means that $(z_{n-k-1},u)$ and $(v,z_{n-k})$ are well defined nonempty intervals.
We claim that $g(u)\in (u,v)$. We already proved that $f^n(u)\in [u,v)$ so it remains to prove that $u$ cannot be a fixed point of $g$. But if $f^n(u)=u$
then repeating calculations in \eqref{ucfn} we obtain \eqref{znkuinv} which is impossible. Using symmetric argument, we obtain that $f^n(v)\neq v$, that is $g(v)\in (u,v)$.
Then Lemma~\ref{lem:inv} implies that $\hat{F}_g$ is completely invariant and hence the set $\hat{J}_g=\mathbb{X}\setminus\hat{F}_g$ is completely invariant as well. But as we mentioned earlier,  $\orb(z_0)\cap (u,v)=\emptyset$, therefore $\orb(z_0)\cap F_g=\emptyset$
showing that $F_g$ is a proper subset of $[0,1]$. Therefore condition (\ref{tw-n(k)-cykle:3.1}) follows from Remarks \ref{rem:cominv} and \ref{rem:FgJg}.

Recall that $\orb(z_0)\cap (u,v)=\emptyset$, so $\orb(z_0)\subset J_g$, hence if we denote
$$
e_- =\sup\{x\in J_g, x<c\}, 
\qquad e_+ =\inf\{x\in J_g, x>c\}
$$
then $[e_-,e_+]\subset [z_{n-k-1},z_{n-k}]$.
Therefore $[f(e_-),f(c_-)]\subset [z_{n-1},1]$, so 
$$
[f^2(e_-),f^2(c_-)]\subset [z_{k-1},f(1)]\subset [z_{k-1},z_k].
$$
It means that $l=N((e_-, c))\geq n$. If $l>n$ then $f^n(e_-)\in (c,f^{n-1}(1))$ which is a contradiction, since $J_g\cap (f^{n-1}(0),f^{n-1}(1))=\emptyset$.
Then $l=n$ and so $e_-=z_{n-k-1}$ because by Theorem~\ref{thm:A}(1) we have $f^n(e_-)=e_-$. Similar argument can be used to prove $e_+=z_{n-k}$, completing the proof of \eqref{tw-n(k)-cykle:3.2}.

Finally, let us apply Theorem~\ref{thm:A}(1) to $J_g$ (cf. Remark~\ref{rem:thm:A}). 
By \eqref{tw-n(k)-cykle:3.2} we obtain that $e_-=z_{n-k-1}$ and $e_+=z_{n-k}$. But then $N((e_-,c))=N((c,e_+))=n$ completing the proof.
\end{proof}

\begin{rem}
Condition \eqref{tw-n(k)-cykle:3} in Theorem~\ref{tw-n(k)-cykle} means that the map defined for $J_g$ by application of Theorem~\ref{thm:A}(1)
is exactly the renormalization provided by Theorem~\ref{tw-n(k)-cykle}.
\end{rem}

\begin{rem}
As we mentioned before, by results of \cite{OPR}, the case \eqref{thm:cycle:2} in Theorem~\ref{thm:cycle} within the class of Lorenz maps with constant slope $x\mapsto \alpha+\beta x$ includes the border case, when Lorenz map has renormalization but is transitive.
\end{rem}

It is also worth mentioning that primary $n(k)$-cycles are a special case of so-called twist orbits, which play an important role understanding and design of models based on Lorenz maps in applications (see \cite{Bart}  and references therein; cf. \cite{GellerMis}).

By Theorem~\ref{thm:cycle}\eqref{tw-n(k)-cykle:min} we know that if an expanding Lorenz map $f$ has a primary $n(k)$-cycle, then for any renormalization $\tilde{g}=(f^l,f^r)$ with $l\geq n$ or $r\geq n$, the number $n$ divides both $l$ and $r$. However, as the next example shows, the map $g=(f^n,f^n)$ does not need to be the minimal renormalization of $f$.
	\begin{exmp}
		\label{ex:5_2_cycle}
		Consider an expanding Lorenz map $f\colon [0,1]\to[0,1]$ defined by $f(x)=\beta x+\alpha (\text{mod }1),$ 
		where
		$$
		\beta:=\frac{9\sqrt[5]{2}}{10}\approx1.03383,\quad\alpha:=\frac{\sqrt[5]{2}}{3}\approx0.38289\quad\text{and}\quad c:=\frac{1-\alpha}{\beta}\approx0.59690.
		$$
	 Denote 
		$$
		z_0:=\frac{\alpha_0}{1-\beta^5},\quad\text{where}\quad\alpha_0:=\beta^4\alpha+\beta^3\alpha+\beta^2\alpha+\beta\alpha-\beta^2+\alpha-1.
		$$
		Then $z_0\approx0.11227$ and the orbit $O:=Orb(z_0)=\{z_0,z_1,z_2,z_3,z_4\}$ forms a primary $5(2)$-cycle for $f$. Let us denote $p_i=f^i(0_+)$ and $q_i=f^i(1_-)$, whose ordering in $[0,1]$ is depicted schematically on Figure~\ref{fig:5_2_cycle}. The critical point $c$ is marked as red dot.

	\begin{figure}[ht]
		\begin{tikzpicture}
		\draw[black, thick] (-6,0) -- (6,0);
		\filldraw [red] (12*0.596908-6,0) circle (1.5pt);
		\filldraw [blue] (12*0.382899-6,0) circle (1.5pt) node[anchor=north] {$p_1$};
		\filldraw [blue] (12*0.778752-6,0) circle (1.5pt) node[anchor=north] {$p_2$};
		\filldraw [blue] (12*0.187995-6,0) circle (1.5pt) node[anchor=north] {$p_3$};
		\filldraw [blue] (12*0.577254-6,0) circle (1.5pt) node[anchor=north] {$p_4$};
		\filldraw [orange] (12*0.416728-6,0) circle (1.5pt) node[anchor=south] {$q_1$};
		\filldraw [orange] (12*0.813725-6,0) circle (1.5pt) node[anchor=south] {$q_2$};
		\filldraw [orange] (12*0.224151-6,0) circle (1.5pt) node[anchor=south] {$q_3$};
		\filldraw [orange] (12*0.614633-6,0) circle (1.5pt) node[anchor=south] {$q_4$};
		\filldraw [black] (12*0.11227-6,0) circle (1.5pt) node[anchor=north] {$z_0$};
		\filldraw [black] (12*0.498967-6,0) circle (1.5pt) node[anchor=north] {$z_2$};
		\filldraw [black] (12*0.898746-6,0) circle (1.5pt) node[anchor=north] {$z_4$};
		\filldraw [black] (12*0.312048-6,0) circle (1.5pt) node[anchor=north] {$z_1$};
		\filldraw [black] (12*0.705504-6,0) circle (1.5pt) node[anchor=north] {$z_3$};
		\end{tikzpicture}
		\caption{Relation between points $p_i$, $q_i$, $z_i$ and $c$ from Example~\ref{ex:5_2_cycle}.} 
		\label{fig:5_2_cycle}
	\end{figure}
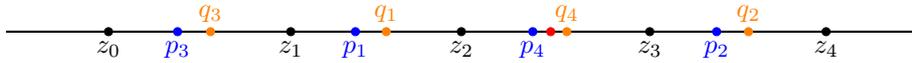
		
		By Theorem \ref{thm:cycle} the map
		$$
		g(x)=\begin{cases}
			f^5(x); & x\in[f^4(0),c)\\
			f^5(x); & x\in(c,f^4(1)]\\
		\end{cases},
		$$
		is the renormalization of $f$. Observe that $g=R_Df$  (cf. Corollary~\ref{cor:periodic}), where
		$$
		D:=\overline{\bigcup_{i=0}^\infty f^{-i}(O)}.
		$$
		But there exists another renormalization
		$$
		\tilde{g}(x)=\begin{cases}
			f^3(x); & x\in[f(0),c)\\
			f^2(x); & x\in(c,f^2(1)]\\
		\end{cases},
		$$
		so the renormalization $g$ associated to set $D$ is not the minimal renormalization of $f$. This example shows that the second statement in Theorem~\ref{thm:B}\eqref{thm:B:3} is also incorrect.
		
		Furthermore the renormalization $\tilde{g}$ is the composition of two trivial renormalizations, i.e. $f_1=(f,f^2)$, $f_2=(f_1^2,f_1)$ and $\tilde{g}=f_2$. Observe also that the map $f_2$ has renormalization $f_3=(f_2^2,f_2^2)$ and $f_3=g$.
	\end{exmp}

Next, we are going to provide two examples, which highlight further problems in Theorem~\ref{thm:A}(2). In fact, improvement of its statement to a correct and general form seems hard.
First, we are going to show, that some renormalizations $g$ cannot be recovered form associated sets $F_g$, even when they are proper subsets of $[0,1]$.
\begin{exmp}\label{ex:3}
We are going to construct a Lorenz map $f(x)=\beta x+\alpha(\text{mod }1)$ 
whose kneading invariant is 
\begin{equation}
\label{examp:kneading}
\left( k_+,k_-\right)=\left(k(c_+),k(c_-)\right)=\left(100101 (01100101)^\infty,01100101(100101)^\infty\right).
\end{equation}
Observe that such a map, among other things, must satisfy the conditions 
\begin{equation}
f^{13}(0_+)=f^5(0_+)\quad\text{and}\quad f^7(1_-)=f^{13}(1_-).\label{eq:13:5:7}
\end{equation}
To obtain the parameters $\beta$ and $\alpha$ we will use the formulas from \cite{DingSun} (see also \cite{Barnsley}). At first consider the power series $K(z)$ given by the kneading invariant~\eqref{examp:kneading}, i.e.
$$
\begin{aligned}
K\left(z\right)&=\sum_{i=0}^{\infty}\left(k(c_+)_i-k(c_-)_i\right)z^i \\
&=\left(1-z-z^2+z^3-z^{14}+z^{15}+z^{16}-z^{17}-z^{20}+z^{21} \right)\left(1+z^{24}+z^{48}+z^{72}+\ldots \right) \\
&=\left(1-z-z^2+z^3-z^{14}+z^{15}+z^{16}-z^{17}-z^{20}+z^{21} \right)\cdot\frac{1}{1-z^{24}}.
\end{aligned}
$$
Since the smallest positive zero of $K(z)$ is $z_0\approx0.905081$, by the result \cite[Lemma~7]{DingSun} we get
$$
\beta=\frac{1}{z_0}\approx1.104872.
$$
In order to compute the parameter $\alpha$ we use the formula from \cite[Theorem~2]{DingSun} for $x=0$. We obtain
$$
\begin{aligned}
\sum_{i=1}^{\infty}\frac{x_i}{\beta^i}&=\frac{1}{\beta^3}+\frac{1}{\beta^5}+\left( \frac{1}{\beta^7}+\frac{1}{\beta^8}+\frac{1}{\beta^{11}}+\frac{1}{\beta^{13}}\right)\left(1+\frac{1}{\beta^8}+\frac{1}{\beta^{16}}+\frac{1}{\beta^{24}}+\ldots\right)  \\
&=\frac{1}{\beta^3}+\frac{1}{\beta^5}+\left( \frac{1}{\beta^7}+\frac{1}{\beta^8}+\frac{1}{\beta^{11}}+\frac{1}{\beta^{13}}\right)\cdot\frac{1}{1-\frac{1}{\beta^8}}=\frac{1}{\beta^3}+\frac{1}{\beta^5}+\frac{1+\beta^2+\beta^5+\beta^6}{\beta^5\left(\beta^8-1\right) }\\
&=\frac{1+\beta+\beta^3+\beta^5}{\beta^8-1}.
\end{aligned}
$$
Hence
$$
\alpha=\left(\beta-1\right)\cdot\frac{1+\beta+\beta^3+\beta^5}{\beta^8-1}\approx0.438147
$$
and the approximate value of the critical point is
$$
c=\frac{1-\alpha}{\beta}\approx0.508522.
$$

Let us denote $p_i=f^i(0_+)$ and $q_i=f^i(1_-)$. Their order is depicted schematically on Figure~\ref{fig:ex:3} (the critical point $c$ is marked with a red dot). It is easy to verify that we were successful with our design and conditions in \eqref{eq:13:5:7} are satisfied.

\begin{figure}[ht]
	\begin{tikzpicture}
	\draw[black, thick] (-6,0) -- (6,0);
	\filldraw [black] (12*0.438148-6.48,0) circle (1.5pt) node[anchor=north] [blue] {$p_1$};
	\filldraw [black] (12*0.922245-6.3,0) circle (1.5pt) node[anchor=north] [blue] {$p_2$};
	\filldraw [black] (12*0.457112-6.3,0) circle (1.5pt) node[anchor=north] [blue] {$p_3$};
	\filldraw [black] (12*0.943198-6.1,0) circle (1.5pt) node[anchor=north] [blue] {$p_4$};
	\filldraw [black] (12*0.480262-6.15,0) circle (1.5pt) node[anchor=north] [blue] {$p_5$};
	\filldraw [black] (12*0.968776-5.9,0) circle (1.5pt) node[anchor=north] [blue] {$p_6$};
	\filldraw [black] (12*0.508522-6,0) circle (1.5pt) node[anchor=north] [blue] {$p_7$};
	\filldraw [black] (12*0.543021-5.9,0) circle (1.5pt) node[anchor=south] [orange] {$q_1$};
	\filldraw [black] (12*0.0381166-6.1,0) circle (1.5pt) node[anchor=south] [orange] {$q_2$};
	\filldraw [black] (12*0.480262-6.15,0) circle (1.5pt) node[anchor=south] [orange] {$q_3$};
	\filldraw [black] (12*0.968776-5.9,0) circle (1.5pt) node[anchor=south] [orange] {$q_4$};
	\filldraw [black] (12*0.508522-6,0) circle (1.5pt) node[anchor=south] [orange] {$q_5$};
	\filldraw [red] (12*0.508522-6,0) circle (1.5pt);
	\end{tikzpicture}
	\caption{Relation between points $p_i$, $q_i$ and $c$ from Example~\ref{ex:3}.} 
	\label{fig:ex:3}
\end{figure}
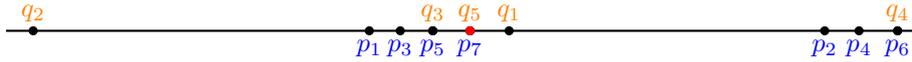

Observe that $\beta<\sqrt{2}$ and
$$
\frac{1}{\beta+\beta^2}\approx0.4299 \leq \alpha\leq\frac{-\beta^3+\beta^2+2\beta -1}{\beta+\beta^2}\approx0.4651,
$$
which by results of \cite[Theorem 6.6]{OPR} implies that $f$ has a primary $2(1)$-cycle. Furthermore, neither of points $f(0),f(1)$ is $2$-periodic, hence
$J_g$ associated to renormalization $g=(f^2,f^2)$ is a closed, fully invariant and proper subset of $[0,1]$ by Theorem~\ref{thm:cycle}. Note that the map $f$ has also a renormalization $\tilde{g}=(f^2,f^4)$ defined on $[\tilde{u},\tilde{v}]:=[f^3(0_+),f(1_-)]$.
Next, observe that
$$
[\hat{u},\hat{v}]:=[f^6(c_+),f^8(c_-)]=[f^5(0_+),f(1_-)]=[f^6(c_+),f^2(c_-)]
$$
so on $[\hat{u},\hat{v}]$ we have two well defined renormalizations $\hat{g}=(f^8,f^6)$ and $\overline{g}=(f^2,f^6)$.
Clearly
$(\hat{u},\hat{v})\subset(\tilde{u},\tilde{v})\subset(f(0),f(1))=(u,v)$, while
$$
f^4((u,\hat{u}))=(\hat{u},\hat{v})\quad \text{ and }\quad f^2(\hat{u})\in(\hat{u},\hat{v}),
$$
showing that $F_{\tilde{g}}=F_{\overline{g}}=F_{\hat{g}}= F_g$. 
In other words, all four renormalizations define the same completely invariant set $F_g$, while only $g$ can be recovered from $F_g$ by procedure presented in Theorem~\ref{thm:A}.

From the point of view of the kneading theory, \eqref{examp:kneading} can be renormalized by words $w_-=01$ and $w_+=10$, which leads to the kneading invariant of the following form
$$
k_{g}=\left(100(0100)^\infty,0100(100)^\infty\right).
$$
As we observed in Example~\ref{ex:3} the kneading invariant $k_g$ can be trivially renormalized three times, which corresponds to the renormalizations $\tilde{g}$, $\overline{g}$ and $\hat{g}$.
\end{exmp}

Next example shows that there may exist several fully invariant sets, defined by different renormalizations. This shows that it may be quite hard to
connect invariant sets with renormalizations in general.
\begin{exmp}\label{ex:4}
Consider an expanding Lorenz map $f\colon [0,1]\to[0,1]$,
$f(x)=\beta x+\alpha\mod 1,$
defined by
$$
\beta:=\sqrt[8]{2}\quad \text{ and }\quad\alpha:=\frac{2-\sqrt[8]{2}}{2}\approx0.4547
$$
Note that $c:=\frac{1-\alpha}{\beta}=\frac{1}{2}$
and denote $p_i=f^i(0_+)$, $q_i=f^i(1_-)$. Ordering of these points in $[0,1]$ is schematically depicted on Figure~\ref{fig:ex:4}.

\begin{figure}[ht]
	\begin{tikzpicture}
	\draw[black, thick] (-6,0) -- (6,0);
	\filldraw [black] (12*0.3125-6.1,0) circle (1.5pt) node[anchor=north] [blue] {$p_1$};
	\filldraw [black] (12*0.8125-6.1,0) circle (1.5pt) node[anchor=north] [blue] {$p_2$};
	\filldraw [black] (12*0.453125-6.15,0) circle (1.5pt) node[anchor=north] [blue] {$p_3$};
	\filldraw [black] (12*0.9375-5.9,0) circle (1.5pt) node[anchor=north] [blue] {$p_4$};
	\filldraw [black] (12*0.625-6.1,0) circle (1.5pt) node[anchor=north] [blue] {$p_5$};
	\filldraw [black] (12*0.125-6.1,0) circle (1.5pt) node[anchor=north] [blue] {$p_6$};
	\filldraw [black] (12*0.484375-6.1,0) circle (1.5pt) node[anchor=north] [blue] {$p_7$};
	\filldraw [black] (12*0.96875-5.8,0) circle (1.5pt) node[anchor=north] [blue] {$p_8$};
	\filldraw [black] (12*0.65625-6,0) circle (1.5pt) node[anchor=north] [blue] {$p_9$};
	\filldraw [black] (12*0.15625-6,0) circle (1.5pt) node[anchor=north] [blue] {$p_{10}$};
	\filldraw [black] (12*0.515625-5.9,0) circle (1.5pt) node[anchor=north] [blue] {$p_{11}$};
	\filldraw [black] (12*0.03125-6.2,0) circle (1.5pt) node[anchor=north] [blue] {$p_{12}$};
	\filldraw [black] (12*0.34375-6,0) circle (1.5pt) node[anchor=north] [blue] {$p_{13}$};
	\filldraw [black] (12*0.84375-6,0) circle (1.5pt) node[anchor=north] [blue] {$p_{14}$};
	\filldraw [black] (12*0.6875-5.9,0) circle (1.5pt) node[anchor=south] [orange] {$q_1$};
	\filldraw [black] (12*0.1875-5.9,0) circle (1.5pt) node[anchor=south] [orange] {$q_2$};
	\filldraw [black] (12*0.546875-5.85,0) circle (1.5pt) node[anchor=south] [orange] {$q_3$};
	\filldraw [black] (12*0.0625-6.1,0) circle (1.5pt) node[anchor=south] [orange] {$q_4$};
	\filldraw [black] (12*0.375-5.9,0) circle (1.5pt) node[anchor=south] [orange] {$q_5$};
	\filldraw [black] (12*0.875-5.9,0) circle (1.5pt) node[anchor=south] [orange] {$q_6$};
	\filldraw [black] (12*0.515625-5.9,0) circle (1.5pt) node[anchor=south] [orange] {$q_7$};
	\filldraw [black] (12*0.03125-6.2,0) circle (1.5pt) node[anchor=south] [orange] {$q_8$};
	\filldraw [black] (12*0.34375-6,0) circle (1.5pt) node[anchor=south] [orange] {$q_9$};
	\filldraw [black] (12*0.84375-6,0) circle (1.5pt) node[anchor=south] [orange] {$q_{10}$};
	\filldraw [black] (12*0.484375-6.1,0) circle (1.5pt) node[anchor=south] [orange] {$q_{11}$};
	\filldraw [black] (12*0.96875-5.8,0) circle (1.5pt) node[anchor=south] [orange] {$q_{12}$};
	\filldraw [black] (12*0.65625-6,0) circle (1.5pt) node[anchor=south] [orange] {$q_{13}$};
	\filldraw [black] (12*0.15625-6,0) circle (1.5pt) node[anchor=south] [orange] {$q_{14}$};
	\filldraw [red] (12*0.5-6,0) circle (1.5pt);
	\end{tikzpicture}
	\caption{Relation between points $p_i$, $q_i$ and $c$ from Example~\ref{ex:4}.} 
	\label{fig:ex:4}
\end{figure}
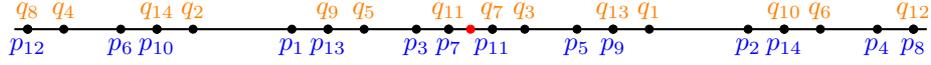

Simple calculations yield that $f^{15}(0)=f^7(0)$ and $f^{15}(1)=f^7(1)$. 
It is not hard to see from Figure~\ref{fig:ex:4} that if we denote
\begin{eqnarray*}
[u,v]&:=&[f(0),f(1)]=[p_1,q_1],\\
{[\hat{u},\hat{v}]}&:=&[f^3(0),f^3(1)]=[p_3,q_3],\\
{[\overline{u},\overline{v}]}&:=&[f^7(0),f^7(1)]=[p_7,q_7],
\end{eqnarray*}
then
$$
		g(x)=\begin{cases}
		f^2(x); & x\in[u,c)\\
		f^2(x); & x\in(c,v]\\
		\end{cases},\quad
		\hat g(x)=\begin{cases}
		f^4(x); & x\in[\hat{u},c)\\
		f^4(x); & x\in(c,\hat{v}]\\
		\end{cases},\quad
		\overline{g}(x)=\begin{cases}
		f^8(x); & x\in[\overline{u},c)\\
		f^8(x); & x\in(c,\overline{v}]\\
		\end{cases}
$$
are well defined renormalizations of $f$.
Since $\beta =\sqrt[8]{2}<\sqrt{2}$ and
$$
\frac{1}{\beta+\beta^2}\approx0.4386 \leq \alpha\leq\frac{-\beta^3+\beta^2+2\beta -1}{\beta+\beta^2}\approx0.4708,
$$
the map $f$ has primary $2(1)$-cycle by \cite[Theorem 6.6]{OPR} and map $g$ with completely invariant set $J_g$ is provided by Theorem~\ref{thm:cycle}. This also means that $F_g$ is a proper subset of $[0,1]$.
On the other hand, orbits of points $\overline{u}, \overline{v}$  never intersect $(\overline{u},\overline{v})$ so $\hat{F}_{\overline{g}}$ is not completely invariant by Lemma~\ref{lem:inv}.
By the same argument, since $[\overline{u},\overline{v}]\subset (\hat u, \hat v)$ the set $\hat{F}_{\hat g}$  is completely invariant.
We claim that $F_g\neq F_{\hat{g}}$. Observe that
$$
f([q_3,p_5]) =[q_4,p_6],\hspace{0.2cm}f([q_4,p_6])=[q_5,p_7],\hspace{0.2cm}f([q_5,p_7])=[q_6,p_8],\hspace{0.2cm}f([q_6,p_8])=[q_7,p_9],
$$
so $[q_3,p_5]\subset f^4([q_3,p_5]).$ This means that there is a $4$-periodic point $z\in [q_3,p_5]$ and since none of the points $q_3,p_5$ is $4$-periodic, we must have
$z\in [u,v]\setminus [\hat{u},\hat{v}]$. It is also clear that $\orb(z)\cap [\hat{u},\hat{v}]=\emptyset$ so indeed the claim holds.
\end{exmp}

\section{Results related to Theorem~\ref{thm:B}}\label{sec:thmB}

The following Remark~\ref{rem:thmb} shows that Theorem~\ref{thm:B} is not true as stated.
The aim of this section will be investigation how many of claims in Theorem~\ref{thm:B} hold.

\begin{rem}\label{rem:thmb}
	Expanding Lorenz map in Example~\ref{51from4} does not have fixed points and is renormalizable. By the definition it also does not have points of period $2$. 
	It has however, a point of period $3$ containing $c_-$ in its orbit according to terminology in \cite{Ding} (cf. \cite[Remark 1]{Ding}; in terms of map $\hat{f}$ point $c_-$ is a point of period $3$, while for $f$ point $c$ has period $4$).
Observe that $D$
	in Theorem~\ref{thm:B} satisfies $D=[0,1]$, showing that Theorem~\ref{thm:B}\eqref{thm:B:3} is false (here $D$ is calculated using $\hat{f}$ and then $\pi$; using $f$ may cause problems, but it seems it is not intention in \cite{Ding}).
\end{rem}

The main difficulty in analysis of Theorem~\ref{thm:B} is that huge part of the proof in \cite{Ding} is based on the following lemma, which is not true in general as shown in Example~\ref{lem:tBce}.
\begin{lem}\label{lem32Ding}
Let $f$ be an expanding Lorenz map, and $1 < \kappa < \infty$ be the smallest period of the periodic points of $f$. Suppose $O$ is a $\kappa$-periodic orbit,
$P_L =\max\{x\in O,x<c\}$, $P_R =\min\{x\in O,x>c\}.$ 
Then we have 
$$
N((P_L, c)) = N((c, P_R)) = \kappa.
$$
\end{lem}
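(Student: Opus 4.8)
The plan is to determine $l:=N((P_L,c))$ by squeezing it between $\kappa$ and $\kappa$, and then to repeat the argument on the other side for $N((c,P_R))$. I would work with the interval $[P_L,c)$ (half-open, including the periodic left endpoint): by the definition of $N$, the maps $f^0,f^1,\dots,f^{l}$ are continuous on $[P_L,c)$ and, being compositions of the two increasing branches of $f$, strictly increasing there. I would also use that the forward orbit of $P_L$ avoids the critical point, so that every $f^j(P_L)$ is unambiguous and still belongs to $O$; as explained at the end, this is the one genuinely delicate point. Note first that any period-$\kappa$ orbit with $\kappa>1$ must straddle $c$ (an orbit trapped in $[0,c)$ or $(c,1]$ lives under an increasing map, so it is a single fixed point), so $P_L$ and $P_R$ are well defined.

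\emph{Lower bound $l\ge\kappa$.} Suppose $l<\kappa$, and choose $z\in(P_L,c)$ with $f^l(z)=c$. Since $f^l$ is strictly increasing on $[P_L,c)$ and $P_L<z$, we get $f^l(P_L)<f^l(z)=c$; as $f^l(P_L)\in O$ and $P_L=\max\{x\in O:x<c\}$, this forces $f^l(P_L)\le P_L$. Hence the continuous function $x\mapsto f^l(x)-x$ is $\le 0$ at $x=P_L$ and equals $c-z>0$ at $x=z$, so it vanishes at some $w\in[P_L,z]$. Then $f^l(w)=w$, so $w$ is a periodic point whose period divides $l<\kappa$, contradicting the minimality of $\kappa$. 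Thus $l\ge\kappa$.

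\emph{Upper bound $l\le\kappa$.} Because $l\ge\kappa$, the map $f^\kappa$ is continuous and strictly increasing on $[P_L,c)$, with $f^\kappa(P_L)=P_L$ since $P_L\in O$. Expansion of $f$ (write $\lambda=\inf_{x\notin F}f'(x)>1$) makes $f^\kappa$ stretch lengths on $[P_L,c)$ by at least $\lambda^\kappa$, so $\lim_{x\to c^-}f^\kappa(x)\ge P_L+\lambda^\kappa(c-P_L)>c$; in particular $c$ lies in the interval $f^\kappa\big([P_L,c)\big)$ and is attained at some $z\in(P_L,c)$, giving $l\le\kappa$ (this also shows $l<\infty$). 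Combining the two bounds, $N((P_L,c))=\kappa$. The equality $N((c,P_R))=\kappa$ is the mirror image: argue on $(c,P_R]$, using that $f$ is increasing on $(c,1]$, that every point of $O$ above $c$ is $\ge P_R$, and that $f^\kappa$ stretches $(c,P_R]$ past $c$ from above.

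The step I expect to be the real obstacle — and the first place I would hunt for a counterexample — is the standing assumption that the forward orbit of $P_L$ (and of $P_R$) never meets $c$, i.e.\ that neither $c_-$ nor $c_+$ belongs to $O$. If $O$ does contain one of the one-sided critical points, then after the orbit of $P_L$ crosses $c$ one increasing branch of $f$ is replaced by the other, so $f^\kappa$ is no longer continuous at $P_L$ when read on $\mathbb{X}$: the estimate $\lim_{x\to c^-}f^\kappa(x)\ge P_L+\lambda^\kappa(c-P_L)$ loses its meaning, $f^\kappa\big([P_L,c)\big)$ need not overshoot $c$, and $N((P_L,c))$ can be strictly larger than $\kappa$. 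So I would expect the lemma to hold precisely under the extra hypothesis $c_\pm\notin O$, and to fail exactly when a minimal periodic orbit passes through the critical point (which is presumably what Example~\ref{lem:tBce} exhibits).
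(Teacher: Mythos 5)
You correctly sensed the trap. The paper reproduces this lemma verbatim from \cite{Ding} precisely in order to point out that it is \emph{false} as stated; it does not prove it but refutes it via Example~\ref{lem:tBce}, where the minimal $3$-cycle of $\hat f$ is $O=\{c_-,1,f(1)\}$ (so $P_L=f(1)$, $P_R=1$) and a direct orbit computation gives $N((P_L,c))=4\neq 3=\kappa$. This is exactly the failure mode you pinpointed at the end of your proposal: a minimal periodic orbit passing through a one-sided critical point, after which $f^\kappa$ ceases to be continuous on $[P_L,c)$ and the stretch estimate loses its meaning. Your prediction about what Example~\ref{lem:tBce} must look like is therefore spot on.

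Under your repaired hypothesis ``$c_\pm\notin O$,'' your argument is in fact a proof of the paper's corrected version, Lemma~\ref{lem:32Dingimproved}, whose hypothesis $\hat P_L\neq c_-$, $\hat P_R\neq c_+$ is the same condition phrased on $\mathbb X$. The two proofs run along the same lines. Your lower bound $l\geq\kappa$ is the paper's Case~2: assume $l<\kappa$, pick $z\in(P_L,c)$ with $f^l(z)=c$, use monotonicity and the definition of $P_L$ to force $f^l(P_L)\leq P_L$, then apply the intermediate value theorem to $f^l(x)-x$ on $[P_L,z]$ to produce a periodic point of period at most $l<\kappa$, contradicting minimality. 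Your upper bound $l\leq\kappa$ is the paper's Case~1; you are slightly more quantitative, invoking $\lambda=\inf f'>1$ to obtain $\lim_{x\to c^-}f^\kappa(x)\geq P_L+\lambda^\kappa(c-P_L)>c$, whereas the paper merely remarks that $\hat f^\kappa((\hat P_L,c_-))\subset(\hat P_L,c_-)$ would contradict the expansion property. The mechanism is identical. In short: the statement as given is unprovable because it is false, you diagnosed exactly why, and your proof of the corrected form coincides with the paper's Lemma~\ref{lem:32Dingimproved}.
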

\begin{exmp}\label{lem:tBce}
Consider expanding Lorenz map $f$ from Example~\ref{51from4}. Then 
$$
O:=Orb(c_-)=\{c_-,1,f(1)\},
$$
is periodic orbit of $f$ (in the terminology of \cite{Ding}; and of $\hat{f}$ in the terminology of the present paper) with the smallest period, and hence $\kappa=3$. By the definition in Lemma~\ref{lem32Ding} we also have
$$
P_L:=\max\{x\in O\hspace{0.2cm}|\hspace{0.2cm}x<c\}=f(1),\hspace{0.2cm}P_R:=\min\{x\in O\hspace{0.2cm}|\hspace{0.2cm}x>c\}=1,
$$
however
$$
\begin{aligned}
f((P_L,c))&=(c,1),\hspace{0.2cm}&f((c,1))=(0,f(1)),\\f((0,f(1)))&=(f(0),c),\hspace{0.2cm}&f((f(0),c))=(f(1),1),\\
\end{aligned}
$$
which clearly means that $N((P_L,c))=4$. 
\end{exmp}

It is not hard to provide an example of continuous map $f$ on $[0,1]$ such that for some $x$ the set $Q=\overline{\bigcup_{k=0}^\infty f^{-k}(x)}$
satisfies $f^{-1}(Q)\setminus Q\neq \emptyset$. Next lemma shows that in expanding Lorenz maps such situation never happens. 

\begin{lem}
	\label{lem:PreimInv}
	Let $f$ be an expanding Lorenz map, $\hat f$ be its extension to the Cantor set $\mathbb{X}$ and $x\in\mathbb{X}$. Denote $D=\overline{\bigcup_{k=0}^\infty\hat{f}^{-k}(x)}$. Then
	$$
	\hat{f}^{-1}(D)\setminus D\subset\{0,1\}.
	$$
	Moreover, if the map $f$ is transitive, then the set $D$ is backward invariant.
\end{lem}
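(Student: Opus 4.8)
The plan is to analyze the structure of preimages in an expanding Lorenz map directly, exploiting the fact that $\hat f$ is a homeomorphism onto its image when restricted to each of the two branches, and that the only ``branching'' of preimages happens over the critical point $c$ (i.e.\ over $c_-$ and $c_+$). First I would record the basic preimage structure: for a point $y\in\mathbb{X}$ that is not $0$ or $1$, the preimage set $\hat f^{-1}(y)$ consists of exactly one point in the ``left'' part $[0,c_-]$ and exactly one point in the ``right'' part $[c_+,1]$ (with the usual care at $c_-,c_+$, whose preimages may coincide with endpoints of holes). The potential failure of backward invariance of $D=\overline{\bigcup_{k\ge 0}\hat f^{-k}(x)}$ can only come from the closure operation: a limit point $z$ of $\bigcup_k \hat f^{-k}(x)$ need not itself be a preimage of a point already in $D$ unless we can pull the approximating sequence back under $\hat f$ in a controlled way.

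The key step is the following: suppose $w\in \hat f^{-1}(D)\setminus D$, so $\hat f(w)\in D$ but $w\notin D$. Since $w\notin D$ and $D$ is closed, there is an open neighbourhood $U$ of $w$ disjoint from $D$; in particular $U$ contains no point of $\bigcup_k \hat f^{-k}(x)$. I would then use expansion together with the density of $\bigcup_{n\ge 0}\hat f^{-n}(c_-)$ and $\bigcup_{n\ge 0}\hat f^{-n}(c_+)$ in $\mathbb{X}$ (Remark~\ref{rem:density}) to argue that, shrinking $U$ if necessary, $\hat f$ maps $U$ homeomorphically onto a neighbourhood $V$ of $\hat f(w)$, \emph{provided} $w\notin\{0,1\}$ and $w$ is not one of the doubled points $c_\pm$ — and even the cases $w=c_\pm$ cause no problem because the two one-sided images are just $1$ and $0$, whose preimages we can track. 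Now $\hat f(w)\in D$ means there is a sequence $y_j\in\bigcup_k \hat f^{-k}(x)$ with $y_j\to \hat f(w)$; eventually $y_j\in V$, and pulling back through the local homeomorphism gives points $w_j=(\hat f|_U)^{-1}(y_j)\in U$ with $w_j\to w$ and $\hat f(w_j)=y_j\in \hat f^{-k_j}(x)$, hence $w_j\in\hat f^{-k_j-1}(x)\subset D$. This contradicts $U\cap D=\emptyset$. The only way this argument can break down is when no such local homeomorphic inverse branch exists, which happens exactly at $w\in\{0,1\}$ (the endpoints, where there is a ``one-sided'' obstruction to the inverse branch). This yields $\hat f^{-1}(D)\setminus D\subset\{0,1\}$.

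For the ``moreover'' part, assume $f$ is transitive. I would show that $0,1$ cannot lie in $\hat f^{-1}(D)\setminus D$. The natural route is: if $\hat f(0)\in D$ or $\hat f(1)\in D$, I want to conclude $0\in D$ (resp.\ $1\in D$). Here transitivity should be used to ensure that $D$ is ``large enough'': a transitive expanding Lorenz map cannot have a proper completely invariant closed set apart from trivial ones — indeed by the results quoted around Corollary~\ref{cor:TransInv} / Remark~\ref{rem:thma}, transitivity forces the relevant completely invariant closed sets to be all of $\mathbb{X}$ (or the whole interval), so either $D=\mathbb{X}$, in which case the conclusion is trivial, or one uses strong transitivity (proved later in this section) to see that $\bigcup_k\hat f^{-k}(x)$ accumulates at $0$ and at $1$ as well, so $0,1\in D$ automatically. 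Either way $\hat f^{-1}(D)\subset D$.

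The main obstacle I anticipate is making the ``local homeomorphic inverse branch'' argument fully rigorous at the awkward points — the doubled points $c_-,c_+$ and their preimages, and points whose forward orbit hits $c$ — since there the map $\hat f$ is genuinely only one-sided invertible and one must check by hand that the relevant one-sided preimages still land inside $D$ (using that $\hat f^{n}(0)=c$ or $\hat f^n(1)=c$ forces $c_\pm$, hence $0$ and $1$, into the orbit structure, and using the convention in the doubling construction that sends the endpoints of a hole to the endpoints of the image hole). The second, milder obstacle is pinning down exactly which transitivity-type statement (strong transitivity, or the absence of proper completely invariant closed sets) is available at this point in the paper to kill the endpoints $0,1$; if strong transitivity is only established later, I would instead phrase the ``moreover'' using the completely-invariant-set characterization already at hand.
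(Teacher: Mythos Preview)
Your approach to the first assertion --- showing $\hat f^{-1}(D)\setminus D\subset\{0,1\}$ via a local inverse-branch / pull-back argument --- is essentially the paper's proof, just phrased dually: the paper pushes a neighbourhood $V\ni y$ forward and observes that $\hat f(V)$ is open and hence would have to meet $\bigcup_k\hat f^{-k}(x)$, while you pull the approximating sequence back. The key fact in both versions is that for $y\notin\{0,1\}$ one can find $V\ni y$ with $\hat f(V)$ open; the paper handles $y=c_\pm$ explicitly by noting that the one-sided neighbourhoods $(a,c_-]$ and $[c_+,b)$ have open images, so your anticipated ``awkward points'' are not an obstacle.

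Your plan for the ``moreover'' part, however, is \emph{circular}. Both routes you propose --- invoking Corollary~\ref{cor:TransInv} (no proper completely invariant closed set under transitivity) or invoking strong transitivity / density of backward orbits to force $0,1\in D$ --- rest on Theorem~\ref{thm:TransDenseOrbits}, and the first substantive step of that proof quotes Lemma~\ref{lem:PreimInv} to get $\hat f^{-1}(S)\subset S$. So neither tool is ``already at hand''. The paper's argument for the transitive case is self-contained and quite different: assuming $0\in\hat f^{-1}(D)\setminus D$, it first iterates a small neighbourhood of $0$ past $c_-$ to obtain a neighbourhood of $1$ with the same property, then takes the maximal one-sided intervals $[0,z_L)$ and $(z_R,1]$ disjoint from $\bigcup_k f^{-k}(\pi(x))$, assembles their first $k_L$ (resp.\ $k_R$) iterates into a finite union $W$ of intervals which is \emph{forward invariant} and still disjoint from $\bigcup_k f^{-k}(\pi(x))$, and finally uses $\hat f(0)\in D$ to produce infinitely many pairwise disjoint intervals $I_n$ accumulating at $f(0)$, one of which must be disjoint from $W$ --- hence disjoint from every $f^n(W)$, contradicting transitivity. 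The idea missing from your plan is this direct construction of a forward-invariant set with nonempty interior from the failure of backward invariance at an endpoint, rather than an appeal to downstream structural results.
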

\begin{proof}
At first, observe that for any point $y\in \mathbb{X}\setminus\{0,1\}$ and its open neighborhood $U$ there is an open set $V\subset U$ such that $y\in V$ and $\hat{f}(V)$ is also open in $\mathbb{X}$. Indeed, if $y\neq c_-$ and $y\neq c_+$, then there is an interval $y\in (a,b)\subset U$ such that $\hat{f}((a,b))=(\hat{f}(a),\hat{f}(b))$. Otherwise, $y=c_-\in (a,c_-]\subset U$ for some $a\in\mathbb{X}$ or $y=c_+\in [c_+,b)\subset U$ for some $b\in\mathbb{X}$. Clearly the sets $\hat{f}((a,c_-])=(\hat{f}(a),1]$ and $\hat{f}([c_+,b))=[0,\hat{f}(b))$ are both open.

Now suppose that there exists a point $y\in\hat{f}^{-1}(D)$ such that $y\notin\{0,1\}$ and $y\notin D$. So, for some open neighborhood $U$ of the point $y$ we have $U\cap\bigcup_{k=0}^\infty\hat{f}^{-k}(x)=\emptyset$. By the previous observation, there is an open set $V\subset U$ such that $\hat{f}(V)$ is an open neighborhood of the point $\hat{f}(y)$. But then $\hat{f}(V)\cap\bigcup_{k=0}^\infty\hat{f}^{-k}(x)=\emptyset$ which contradicts $y\in\hat{f}^{-1}(D)$. Therefore $\hat{f}^{-1}(D)\setminus D\subset\{0,1\}$.
	
Next, let $0\in\hat{f}^{-1}(D)\setminus D$ (the case $1\in\hat{f}^{-1}(D)\setminus D$ is dealt similarly) and let $V$ be an open neighborhood of the point $0$ such that $V\cap\bigcup_{k=0}^\infty\hat{f}^{-k}(x)=\emptyset$. Let us denote
$$
k_0:=\min\{n\in\mathbb{N}_0: c_-\in\hat{f}^n(V)\}.
$$
Then there is an open set $U\subset\hat{f}^{k_0+1}(V)$ containing the point $1$. Clearly, we have $U\cap\bigcup_{k=0}^\infty\hat{f}^{-k}(x)=\emptyset$. Hence,
$$
[0,z_1)\cap\bigcup_{k=0}^\infty f^{-k}(\pi(x))=\emptyset\quad\text{and}\quad(z_2,1]\cap\bigcup_{k=0}^\infty f^{-k}(\pi(x))=\emptyset
$$
for some points $z_1,z_2\in(0,1)$. Denote
$$
z_L:=\sup\left\lbrace z\in(0,1):[0,z)\cap\bigcup_{k=0}^\infty f^{-k}(\pi(x))=\emptyset\right\rbrace,\quad k_L:=N([0,z_L)),
$$
$$
z_R:=\inf\left\lbrace z\in(0,1):(z,1]\cap\bigcup_{k=0}^\infty f^{-k}(\pi(x))=\emptyset\right\rbrace,\quad k_R:=N((z_R,1])
$$
and
$$
W:=\bigcup_{i=0}^{k_L}f^i([0,z_L))\cup\bigcup_{i=0}^{k_R}f^i((z_R,1]).
$$
Note that by the definition of $z_L$ and $z_R$ we have
$$
W\cap\bigcup_{k=0}^\infty f^{-k}(\pi(x))=\emptyset\quad\text{and}\quad f(W)\subset W.
$$
Since $\hat{f}(0)\in D$ and $0\notin D$, there is a strictly increasing sequence $\{x_n\}_{n=1}^\infty\subset\bigcup_{k=0}^\infty f^{-k}(\pi(x))$ such that $\lim\limits_{n\to\infty}x_n=f(0)$. Let $I_n:=(x_n,x_{n+1})$ for each $n\in\mathbb{N}$. Observe that $\{I_n\}_{n=1}^\infty$ is an infinite sequence of pairwise disjoint intervals and $W$ is a union of finite number of intervals. Therefore $W\cap I_{n_0}=\emptyset$ for some $n_0\in\mathbb{N}$ and, by forward invariance, $f^n(W)\cap I_{n_0}=\emptyset$ for every $n\in\mathbb{N}$ which is impossible in the case of transitive map $f$. 
\end{proof}
	
	\begin{thm}\label{thm:Bimprov}
		Let $f$ be an expanding Lorenz map and let $\hat f$ be its extension to the Cantor set $\mathbb{X}$. Let $1 < \kappa < \infty$ be the smallest period of the periodic points of $\hat{f}$. Suppose $O$ is a $\kappa$-periodic orbit.
		The following assertions hold:
		\begin{enumerate}
			\item\label{thm:Bimprov:1} The set $D_O=\overline{\bigcup_{n\geq0}\hat{f}^{-n}(O)}$ is forward invariant and satisfies
			$$
			 \hat{f}^{-1}(D_O)\setminus D_O\subset\{0,1\}.
			$$
			\item\label{thm:Bimprov:2} $O$ is the unique $\kappa$-periodic orbit of $\hat{f}$.
		\end{enumerate}
	\end{thm}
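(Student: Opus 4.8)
The plan is to treat the two assertions separately.

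For \eqref{thm:Bimprov:1}, put $A=\bigcup_{n\geq0}\hat f^{-n}(O)$, so $D_O=\overline A$. Forward invariance is immediate from the definition: if $y\in\hat f^{-(n+1)}(O)$ then $\hat f(y)\in\hat f^{-n}(O)\subseteq A$, and $\hat f(O)=O\subseteq A$, so $\hat f(A)\subseteq A$; by continuity of $\hat f$ this gives $\hat f(D_O)=\hat f(\overline A)\subseteq\overline{\hat f(A)}\subseteq\overline A=D_O$. For the inclusion $\hat f^{-1}(D_O)\setminus D_O\subseteq\{0,1\}$, observe that since $O$ is a single $\hat f$-orbit one has $A=\bigcup_{n\geq0}\hat f^{-n}(x)$ for any fixed $x\in O$ (a point mapping into $O$ after $n$ steps maps into $x$ after at most $n+\kappa$ steps). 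Hence $D_O$ is precisely the set $D$ of Lemma~\ref{lem:PreimInv}, and the desired inclusion is exactly that lemma.

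For \eqref{thm:Bimprov:2} I argue by contradiction, assuming $O\neq O'$ are two $\kappa$-periodic orbits of $\hat f$. To a $\kappa$-periodic orbit $Q$ attach its rotation number $\rho(Q)=j_Q/\kappa$, where $j_Q$ counts the points of $Q$ above $c$. If $j_Q\in\{0,\kappa\}$ then $\hat f$ restricts to a strictly increasing self-bijection of the finite set $Q$, which must be the identity, forcing $\kappa=1$ — impossible; so $0<j_Q<\kappa$. Since $\rho(Q)$ is the rotation number of every point of $Q$ (viewing $f$ as a degree-one circle map by identifying the endpoints of $[0,1]$), it lies in the rotation set of $f$; writing $\rho(Q)=p/q$ in lowest terms, the classical fact that such a $p/q$ is realised by a periodic orbit of period $q$, together with minimality of $\kappa$, forces $q=\kappa$, i.e.\ $\gcd(j_Q,\kappa)=1$. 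I next show $\rho(O)=\rho(O')$: if not, say $\rho(O)=j/\kappa<j'/\kappa=\rho(O')$ with $1\leq j<j'\leq\kappa-1$ and both fractions reduced; as the rotation set is a closed interval it contains $[j/\kappa,j'/\kappa]$. Let $r/s$ be the least fraction exceeding $j/\kappa$ with denominator at most $\kappa$; classically $s<\kappa$, and $r/s\neq j'/\kappa$ since $\gcd(j',\kappa)=1$, so $j/\kappa<r/s<j'/\kappa$, producing a periodic orbit of period $s<\kappa$ — contradiction. Hence $\rho(O)=\rho(O')=p/q$ with $q=\kappa$.

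The remaining step is the genuinely delicate one, and is where the expanding hypothesis enters in an essential way: one must rule out two distinct $\kappa$-periodic orbits of the same rotation number $p/q$. Minimality of $\kappa$ forces $O\cup O'$ to be well-ordered for the rotation number $p/q$ — a periodic orbit that is not rotation-ordered yields, by a standard combinatorial-dynamics argument, a periodic orbit of period strictly below $\kappa$ — so $\hat f$ acts on the $2\kappa$-point set $O\cup O'$ as a cyclic-order-preserving bijection conjugate to rotation by $p/q$. The goal is then to choose two points $a<b$, consecutive in $O\cup O'$, lying on the same side of $c$ and in different orbits, such that $\hat f^{i}$ is continuous and monotone on $[a,b]$ for all $0\leq i\leq\kappa$ and $\hat f^{\kappa}$ fixes both $a$ and $b$; then $\hat f^{\kappa}|_{[a,b]}$ is a strictly increasing continuous self-map of $[a,b]$ fixing both endpoints with derivative at least $(\inf_{x\notin F}f'(x))^{\kappa}>1$ off a finite set, which is impossible. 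I expect the main obstacle to be precisely the existence of such an interval: one must exclude the configuration in which $c$ separates the two members of a ``corresponding pair'' of $O\cup O'$ (so that no clean return interval is available), and handle the degenerate positions in which $a$ or $b$ is a doubled point $c_{\pm}$, or equals $0$ or $1$. In those cases I would replace the interval estimate by a direct comparison of the kneading sequences of the points of $O$ and $O'$, using the admissibility inequalities \eqref{eq:kneading} together with minimality of $\kappa$ to force $O=O'$.
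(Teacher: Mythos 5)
Your proof of assertion \eqref{thm:Bimprov:1} is correct and matches the paper's: same continuity argument for forward invariance, and your reduction $\bigcup_{n\geq0}\hat f^{-n}(O)=\bigcup_{n\geq0}\hat f^{-n}(x)$ for any single $x\in O$ (because $O$ is one cycle) is a clean way to hand the rest over to Lemma~\ref{lem:PreimInv}, which is exactly what the paper does, though it leaves the single-point reduction implicit.

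Assertion \eqref{thm:Bimprov:2} is where your argument parts company with the paper's, and it is also where your argument has a genuine gap. The paper's proof is short and entirely elementary: given two distinct $\kappa$-orbits $O$, $\widetilde O$, pick $P_L=\max\{x\in O : x\leq c_-\}$ and $\widetilde P_L=\max\{x\in\widetilde O : x\leq c_-\}$, say $P_L<\widetilde P_L$, and set $l=N((\pi(P_L),\pi(\widetilde P_L)))$. If $l<\kappa$, then $\hat f^l$ is monotone on $[P_L,\widetilde P_L]$ while sending $P_L$ below $P_L$ and $\widetilde P_L$ above $c_+$, so the interval covers itself monotonically and yields a periodic point of period $l<\kappa$, contradicting minimality. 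If $l\geq\kappa$, then $\hat f^\kappa$ is a monotone self-map of $(P_L,\widetilde P_L)$ fixing both endpoints, contradicting expansion. A small separate case handles $c_+\in\widetilde O$ by passing to $P_R, \widetilde P_R$. No rotation theory is needed at all.

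Your route instead attaches rotation numbers to the orbits, argues via the rotation set (closed interval, rational realised by a periodic orbit of that denominator) that both orbits have the same rotation number $p/\kappa$ in lowest terms, and then says the remaining task is to rule out two distinct rotation-ordered $\kappa$-orbits of the same rotation number. Two concerns. First, the intermediate steps quietly invoke the full Poincar\'e theory for degree-one circle maps (closedness and intervality of the rotation set, realisability of rationals by periodic orbits of the correct period), but a Lorenz map identified at $0\sim 1$ is a \emph{discontinuous} degree-one circle map whenever $f(0)\neq f(1)$; these facts do extend to the ``old heavy'' discontinuous setting, but that is extra machinery to cite carefully, and it is far heavier than what the statement needs. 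Second, and more importantly, your final step is not a proof. You describe what you would \emph{like} to do (find consecutive points $a<b$ of $O\cup O'$ on the same side of $c$, in different orbits, with $\hat f^i$ continuous on $[a,b]$ for $i\leq\kappa$) and then explicitly concede ``I expect the main obstacle to be precisely the existence of such an interval'' and ``I would replace the interval estimate by a direct comparison of kneading sequences''. That is a plan, not an argument. In particular you have not shown that a suitable return interval exists when $c$ sits between the two ``corresponding'' points of $O\cup O'$, nor how the kneading-sequence comparison would actually close the case when $a$ or $b$ is one of $c_\pm, 0, 1$. Since this is exactly the place where the proof must use expansion and the structure of $\mathbb{X}$, leaving it at the level of an announced strategy is a genuine gap. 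You should either fill in this step in detail (and then compare the result with the much shorter argument in the paper) or adopt the paper's direct interval argument, which avoids rotation theory entirely.
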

	\begin{proof}
		\eqref{thm:Bimprov:1}:
		The inclusion $\hat{f}^{-1}(D_O)\setminus D_O\subset\{0,1\}$ follows from the Lemma \ref{lem:PreimInv}. In order to prove the forward invariance let us denote $A:=\bigcup_{n\geq0}\hat{f}^{-n}(O)$. Then by continuity of $\hat{f}$ we obtain
		$$
		\hat{f}(\overline{A})\subset\overline{\hat{f}(A)}=\overline{\hat{f}(\bigcup_{n\geq0}\hat{f}^{-n}(O))}=\overline{A\cup O}=\overline{A}.
		$$
		
		\eqref{thm:Bimprov:2}:
		Suppose $\widetilde{O}$ is a $\kappa$-periodic orbit and $\widetilde{O}\neq O$. Denote
		$$
		P_L:=\max\{x\in O:x\leq c_-\}\quad\text{and}\quad \widetilde{P}_L:=\max\{x\in \widetilde{O}:x\leq c_-\}.
		$$
		Since $P_L\neq\widetilde{P}_L$ we can assume that $P_L<\widetilde{P}_L$ and denote $l:=N((\pi(P_L),\pi(\widetilde{P}_L)))$. We claim  that the number $l$ is well defined. Assume that $\pi(P_L)=\pi(\widetilde{P}_L)$. Since  $\widetilde{O}\neq O$, then one of these sets must contain $c_-$, in particular $\widetilde{P}_L=c_-$. But $\pi(P_L)=\pi(\widetilde{P}_L)=c$, thus the only possibility is that $P_L=c_+$ which is impossible. Indeed, the claim holds.
		
		Assume first that $c_+\notin\widetilde{O}$ and let us consider two cases:
		
		\textbf{Case 1}. $l<\kappa$. By the definition of $l$, the map $\hat{f}^l$ is monotone on $[P_L,\widetilde{P}_L]$. On the other hand $c\in f^l((\pi(P_L),\pi(\widetilde{P}_L)))$,
		so $\hat{f}^l(P_L)<c_-<c_+< \hat{f}^l(\widetilde{P}_L)$. By the definition $\hat{f}^l(P_L)< P_L$ which gives $[P_L,\widetilde{P}_L]\subset \hat{f}^l([P_L,\widetilde{P}_L])$.

	Then there is $[a,b]\subset (P_L,\widetilde{P}_L)$ such that $\hat{f}^l([a,b])=[P_L,\widetilde{P}_L]$.		
	Then $[\pi(a),\pi(b)]\subset f^l([\pi(a),\pi(b)])$ and $f^l$ is continuous on that interval, and so there is a point $x\in [a,b]$ 
	with $f^l(x)=x$. Note that $f^i(x)\neq c$ for $i=0,1,\ldots, l$ by the definition of $l$, so $\pi^{-1}(\{x\})$ is a singleton, and so $\hat{f}$ has point of period $l<\kappa$. It is a contradiction.

\textbf{Case 2}.			
			 $l\geq\kappa$. Then, by periodicity of $P_L$ and $\widetilde{P}_L$ we obtain
			$$
			\hat{f}^\kappa((P_L,\widetilde{P}_L))=(\hat{f}^\kappa(P_L),\hat{f}^\kappa(\widetilde{P}_L))=(P_L,\widetilde{P}_L),
			$$
			which contradicts the topological expanding condition.

Next assume that $c_+\in\widetilde{O}$. Then $O\cap \{c_-,c_+\}=\emptyset$.
If we denote
		$$
		P_R:=\min\{x\in O:x\geq c_+\}\quad\text{and}\quad \widetilde{P}_R:=\min\{x\in \widetilde{O}:x\geq c_+\},
		$$
we obtain that $c_+=\widetilde{P}_R<P_R$ and appropriate modifications of the cases above completes the proof.
	\end{proof}

	\begin{lem}
		\label{lem:expan}
		For any two points $x,y\in\mathbb{X}$ with $d(x,y)<1$ we have $d(\hat{f}(x),\hat{f}(y))\geq d(x,y)$.
	\end{lem}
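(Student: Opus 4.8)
\emph{Overall strategy.} The plan is to handle the two summands of
$d(x,y)=|\pi(x)-\pi(y)|+\frac{1}{N(x,y)+1}$ separately, after first using the hypothesis $d(x,y)<1$ to confine $x$ and $y$ to one branch of $\hat f$. Concretely, I will show that (i) $|\pi(\hat f(x))-\pi(\hat f(y))|\ge|\pi(x)-\pi(y)|$ and (ii) $N(\hat f(x),\hat f(y))\le N(x,y)$; since $s\mapsto\frac1{s+1}$ is decreasing, adding these two inequalities gives the claim.

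\emph{Confinement to one branch.} The case $x=y$ is trivial, and by symmetry of $d$ we may assume $x<y$. Since $\frac1{N(x,y)+1}\le 1$ with equality exactly when $N(x,y)=0$, the hypothesis $d(x,y)<1$ forces $N(x,y)=n(x,y)\ge 1$. If we had $x<c_+$ and $c_-<y$, then $z=c\in f^{-0}(c)\setminus\{0,1\}$ would witness $n(x,y)=0$, a contradiction; hence $x\ge c_+$ or $y\le c_-$, and since $x<y$ exactly one of these holds, so $[x,y]\subseteq[c_+,1]$ or $[x,y]\subseteq[0,c_-]$. On each of these intervals $\hat f$ is strictly order-preserving: $f$ is strictly increasing on $[0,c)$ and on $(c,1]$ with one-sided limits $f(c^-)=1$, $f(c^+)=0$, so $\pi\circ\hat f$ is strictly increasing there, and if $x,y$ are the two (adjacent) endpoints $a_-,a_+$ of a hole then $\hat f(a_-),\hat f(a_+)$ are the respective endpoints of the hole $I_{f(a)}$, hence $\hat f(a_-)<\hat f(a_+)$. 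In particular $\hat f(x)<\hat f(y)$, so the metric formula applies to the image pair as well.

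\emph{The two summand estimates.} For (i): by the previous paragraph $\pi(x)$ and $\pi(y)$ lie in a common interval on which $f$ (completed by the one-sided limit at $c$) is strictly increasing, and since $t\mapsto f(t)-t$ has positive derivative off a finite set and is continuous, it is nondecreasing there, so $|\pi(\hat f(x))-\pi(\hat f(y))|=|f(\pi(x))-f(\pi(y))|\ge|\pi(x)-\pi(y)|$. For (ii): set $k=N(x,y)\ge 1$ and fix a witness $z\in(\bigcup_{j=0}^k f^{-j}(c))\setminus\{0,1\}$ with $x<z_+$, $z_-<y$. Because $z_-,z_+$ are adjacent in $\mathbb{X}$, $x<z_+$ forces $x\le z_-$ and $z_-<y$ forces $z_+\le y$, so $x\le z_-<z_+\le y$; moreover $z\ne c$ (else $x<c_+$, $c_-<y$, contradicting $N(x,y)\ge 1$). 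Let $j\ge 1$ be minimal with $f^j(z)=c$, so $1\le j\le k$, and put $w=f(z)$. Since $f$ carries $(0,c)$ and $(c,1)$ into $(0,1)$, we get $w\in(0,1)$, and $f^{j-1}(w)=c$ with $0\le j-1\le k-1$, so $w\in(\bigcup_{i=0}^{k}f^{-i}(c))\setminus\{0,1\}$. By the definition of $\hat f$, the points $\hat f(z_-),\hat f(z_+)$ are the endpoints of the hole $I_w$, i.e. $\hat f(z_-)=w_-$ and $\hat f(z_+)=w_+$; combining this with order preservation on the branch containing $[x,y]$ and $x\le z_-<z_+\le y$ gives $\hat f(x)\le w_-<w_+\le\hat f(y)$, so $w$ witnesses $n(\hat f(x),\hat f(y))\le k$. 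This is (ii), and (i)+(ii) finish the proof.

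\emph{Main obstacle.} The delicate part is (ii). One must recognize that any witness $z$ for $n(x,y)$ automatically sits inside $[x,y]$ (which relies on the two copies $z_-,z_+$ of a blown-up point being adjacent in $\mathbb{X}$, so no part of $[x,y]$ can lie strictly inside the hole $I_z$), that $z\ne c$ and hence $j\ge 1$ (so that applying $\hat f$ genuinely decreases the iterate count by one), and that the image $f(z)$ is again an admissible witness — in particular $f(z)\notin\{0,1\}$, which is exactly where the expanding-Lorenz condition $0\le f(0)<f(1)\le 1$ is used. Estimate (i) and the branch confinement are routine by comparison.
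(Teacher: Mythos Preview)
Your proof is correct and takes essentially the same route as the paper: confine $x,y$ to one branch of $\hat f$ via $d(x,y)<1$, then control the two summands of $d$ separately. The paper is terser, simply asserting $N(x,y)=N(\hat f(x),\hat f(y))+1$ without justification, whereas you supply the witness argument in detail; note that your own argument in fact yields $n(\hat f(x),\hat f(y))\le k-1$ (since $j-1\le k-1$), matching the paper's equality, though the weaker bound $\le k$ you stated already suffices.
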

	\begin{proof}
		Let $x,y\in\mathbb{X}$ and assume $x<y$ (for $x=y$ the statement holds). First, note that $d(c_-,c_+)=1$. So $d(x,y)<1$ implies that $x,y\in[0,c_-]$ or $x,y\in[c_+,1]$. By expanding condition, if $x,y\in[0,c_-)$ or $x,y\in(c_+,1]$, then
		$$
		|\pi(\hat{f}(x))-\pi(\hat{f}(y))|=|f(\pi(x))-f(\pi(y))|\geq|\pi(x)-\pi(y)|.
		$$
		Similarly, if $y=c_-$, then
		$$
		|\pi(\hat{f}(x))-\pi(\hat{f}(y))|=|f(\pi(x))-1|>|\pi(x)-c|
		$$
		 and in the case $x=c_+$ we have
		$$
		|\pi(\hat{f}(x))-\pi(\hat{f}(y))|=|0-f(\pi(y))|>|c-\pi(y)|.
		$$
		Moreover, observe that $N(x,y)=N(\hat{f}(x),\hat{f}(y))+1$ provided that $x,y\in[0,c_-]$ or $x,y\in[c_+,1]$. This applied to \eqref{eq:metricX} implies that $d(\hat{f}(x),\hat{f}(y))\geq d(x,y)$ whenever $d(x,y)<1$.
	\end{proof}

	\begin{thm}
		\label{thm:TransDenseOrbits}
		Let $f$ be a transitive and expanding Lorenz map and $\hat{f}$  be the map induced on $\mathbb{X}$. Then for every $x\in\mathbb{X}$ the set $\bigcup_{k=0}^\infty\hat{f}^{-k}(x)$ is dense in $\mathbb{X}$.
	\end{thm}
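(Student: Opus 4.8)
The plan is to fix $x\in\mathbb{X}$, set $D:=\overline{\bigcup_{k\geq 0}\hat f^{-k}(x)}$, and prove $D=\mathbb{X}$, which is equivalent to the statement. Since $x\in D$ we have $D\neq\emptyset$. By Lemma~\ref{lem:PreimInv}, transitivity of $f$ makes $D$ backward invariant, $\hat f^{-1}(D)\subseteq D$, so that $G:=\mathbb{X}\setminus D$ is open and forward invariant, $\hat f(G)\subseteq G$. I would argue by contradiction, assuming $G\neq\emptyset$, and aim to conclude $G=\mathbb{X}$, contradicting $D\neq\emptyset$.

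The first step upgrades ``$G\neq\emptyset$'' to ``$G$ is dense''. This is the only place transitivity is used beyond Lemma~\ref{lem:PreimInv}: for every nonempty open $W$ the set $\bigcup_{n\geq 1}\hat f^n(W)$ is dense (immediate from the definition of transitivity), and applying this to $W=G$ together with $\hat f^n(G)\subseteq G$ shows $G$ is dense, hence $D$ is nowhere dense. Combining density of $G$ with Remark~\ref{rem:density}, the sets $\bigcup_{m\geq 0}\hat f^{-m}(c_-)$ and $\bigcup_{m\geq 0}\hat f^{-m}(c_+)$ are dense, so each meets the dense open set $G$; picking $w\in G$ with $\hat f^{M}(w)=c_-$ gives $c_-=\hat f^M(w)\in\hat f^M(G)\subseteq G$, and likewise $c_+\in G$. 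By forward invariance, $1=\hat f(c_-)\in G$ and $0=\hat f(c_+)\in G$, and since $G$ is open it contains order-neighbourhoods of $c_-$, $c_+$, $0$ and $1$; in particular $(p,c_-]\cup[c_+,q)=(p,q)\subseteq G$ for some $p<c_-<c_+<q$, an honest neighbourhood of $c$.

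The heart of the argument is to show that an open, dense, forward invariant set $G$ containing such a neighbourhood of $c$ must be all of $\mathbb{X}$. Here one exploits the genuine expansion constant $\lambda:=\inf_{y\notin F}f'(y)>1$: integrated along a branch of monotonicity it gives $|\pi(\hat f(b))-\pi(\hat f(a))|\geq\lambda\,|\pi(b)-\pi(a)|$ whenever $[a,b]\subseteq[0,c_-]$ or $[a,b]\subseteq[c_+,1]$ (Lemma~\ref{lem:expan} is the companion non-contraction statement for the metric $d$). Starting from $(p,q)\subseteq G$ one tracks the forward images $\hat f^n((p,q))\subseteq G$: each of them is a finite union of order-intervals whose total $\pi$-length is multiplied by at least $\lambda$ under $\hat f$, except for a loss of at most $f(1)-f(0)$ coming from the overlap $[f(0),f(1)]$ of the ranges of the two monotone branches; using $\hat f(c_-)=1$, $\hat f(c_+)=0$ and the defining inequality $f(0)<f(1)$, one shows that after finitely many steps these images cover all of $\mathbb{X}$. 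Then $\mathbb{X}=\hat f^N((p,q))\subseteq G$, which is the desired contradiction; hence $G=\emptyset$ and $D=\mathbb{X}$.

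I expect the genuine obstacle to be exactly the bookkeeping in the last paragraph: proving that the iterated images of $(p,q)$ do exhaust $\mathbb{X}$ rather than merely staying dense. This is false for non-transitive expanding Lorenz maps (those carrying a primary $n(k)$-cycle), so the density of $G$ obtained in the second step has to be genuinely used --- e.g.\ by ruling out, via the interplay of $\lambda$-expansion with the neighbourhoods of $c_\pm,0,1$ already known to lie in $G$, that the $\pi$-lengths of $\hat f^n((p,q))$ plateau below the value forcing a covering --- and the doubled points $c_-,c_+,0,1$ together with the one-sided conventions for $\hat f$ must be carried along carefully. A secondary route worth keeping in mind is to first deduce from Lemma~\ref{lem:expan} and branch expansion that $\hat f$ is positively expansive (distinct points are eventually driven to distance $\geq d(c_-,c_+)=1$) and then combine positive expansivity with transitivity; but this still seems to require essentially the same interval analysis to pass from transitivity to this strong form of transitivity.
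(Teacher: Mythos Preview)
Your first three steps are correct and pleasant: using Lemma~\ref{lem:PreimInv} to get backward invariance of $D$, hence forward invariance of $G=\mathbb{X}\setminus D$; using transitivity and forward invariance to see that $G$ is dense; and then pulling $c_\pm$, $0$, $1$ into $G$ via Remark~\ref{rem:density}. The problem is the last step, which you yourself flag as ``bookkeeping''. It is not bookkeeping; it is the core of the theorem, and the length argument you sketch does not close. The bound ``loss of at most $f(1)-f(0)$'' is only valid for a single application of $\hat f$ to a single interval containing $c$; after a few iterations $\hat f^n((p,q))$ fragments into many order-intervals, several of which may straddle $c$ simultaneously, and the overlaps proliferate. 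Summing $\pi$-lengths with multiplicity grows like $\lambda^n$ but tells you nothing about coverage; summing without multiplicity is bounded by $1$ and there is no evident lower bound forcing it to reach $1$. The statement you are trying to establish here---that forward iterates of a neighbourhood of $c$ exhaust $\mathbb{X}$---is exactly strong transitivity (Theorem~\ref{cor:srong-trans}), which the paper proves \emph{after} and \emph{using} the present theorem, so invoking it would be circular.

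The paper's proof avoids this covering problem entirely by a Kameyama-type argument. One picks a transitive point $x\in G$ (such a point exists because $G$ is open, nonempty, forward invariant, and $\hat f$ is transitive) and studies how the orbit of $x$ approaches $S=D$: set $n(i)=\min\{j:d(\hat f^j(x),S)\leq 1/i\}$, pass to a subsequence so that $\hat f^{n(i_j)}(x)\to y\in S$ and $\hat f^{n(i_j)-1}(x)\to z$ with $\hat f(z)=y$. One shows $y,z\notin\{0,1,c_-,c_+\}$ (else backward density of $c_\pm$ would force $S=\mathbb{X}$), so near $z$ the map $\hat f$ is an order-homeomorphism onto a neighbourhood of $y$; pulling back witnesses $x_{i_j}\in S$ of $d(\hat f^{n(i_j)}(x),S)$ through this local inverse produces $q_{i_j}\in S$ with $q_{i_j}\to z$. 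Now the metric expansion of Lemma~\ref{lem:expan} gives $d(\hat f^{n(i_j)-1}(x),q_{i_j})\leq d(\hat f^{n(i_j)}(x),x_{i_j})\leq 1/i_j$, contradicting the minimality of $n(i_j)$. This argument uses only transitivity, backward invariance of $S$, and the one-step non-contraction of Lemma~\ref{lem:expan}; it never needs to control unions of iterated intervals. If you want to salvage your outline, the honest route is to replace the final paragraph by this accumulation-and-pullback contradiction rather than an interval-covering argument.
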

	\begin{proof}
		The proof presented below is a modification of the proof of Theorem 1 from \cite{Kam}. In our proof we do not use the openness of the map $\hat{f}$, which in fact often does not hold.
This always happens, when $0$ is transformed into a point $a\in \mathbb{X}$ such that $a>0$ and $\pi^{-1}(\pi(a))$ is a singleton.
		
		Suppose that there is a point $a\in\mathbb{X}$ for which
		$$
		S:=\overline{\bigcup_{k=0}^\infty\hat{f}^{-k}(a)}\neq\mathbb{X}.
		$$
		By Lemma \ref{lem:PreimInv} we have $\hat{f}^{-1}(S)\subset S$ and therefore $\hat{f}(\mathbb{X}\setminus S)\subset\mathbb{X}\setminus S$. Since $\mathbb{X}\setminus S$ is an open set and $\hat{f}$ is a transitive map, there is a point $x\in\mathbb{X}\setminus S$ with dense orbit. Note that $d(\hat{f}^k(x),S)>0$ for any $k\geq0$ 
while $\liminf_{k\to\infty}d(\hat{f}^k(x),S)=0.$
Therefore for every $i\in\mathbb{N}$ the following number is well defined
		$$
		n(i):=\min \left\lbrace j\in\mathbb{N} : d(\hat{f}^j(x),S)\leq\frac{1}{i}\right\rbrace
		$$
forming a nondecreasing sequence and $n(i)>1$ for all $i$ sufficiently large.
		There is a subsequence $\{\hat{f}^{n(i_j)}(x)\}_{j=0}^\infty$ of the sequence $\{\hat{f}^{n(i)}(x)\}_{i=1}^\infty$ which converges to some point $y\in S$. 
		Note that $n(i)$ is unbounded, as otherwise $\hat{f}^{n(i)}(x)\in S$ for some $i$, which contradicts backward invariance of $S\not\ni x$.
		Going to a subsequence when necessary, we may assume that $z=\lim_{j\to\infty}\hat{f}^{n(i_j)-1}(x)$ exists, and clearly $\hat{f}(z)=y$.
For every $i$ let $x_i\in S$ be a point such that 
$$
d(\hat{f}^{n(i)}(x),x_i)=d(\hat{f}^{n(i)}(x),S).
$$ 
Clearly $\hat{f}^{-1}(x_i)\cap \{0,1,c_-,c_+\}=\emptyset$ as otherwise by backward invariance, $S$ is dense, which is a contradiction.
By the same argument $y,z\not \in \{0,1,c_-,c_+\}$. Then there exists an open neighborhood $V\ni z$ such that $\pi(V)$ is an open interval, and therefore
$\hat{f}(V)\ni y$ is an open set and $\hat{f}|_V$ is injective. But clearly $\lim_{j\to \infty}x_{i_j}=\lim_{j\to \infty}\hat{f}^{n(i_j)}(x)=y$
and so we may assume that $x_{i_j}\in \hat{f}(V)$ for every $j$. Hence, there exists a unique $q_{i_j}\in V$ such that $\hat{f}(q_{i_j})=x_{i_j}$.
We may also assume, going to a subsequence if necessary,  that $\lim_{j\to \infty}q_{i_j}$ exists, and thus it must by $z$
since $\hat{f}(\lim_{j\to \infty}q_{i_j})=\lim_{j\to \infty}x_{i_j}$ and $\hat{f}$ is injective on $V$.
In other words
$$
\lim_{j\to \infty}q_{i_j}=\lim_{j\to\infty}\hat{f}^{n(i_j)-1}(x).
$$
By backward invariance, we also have that $q_{i_j}\in S$, and then
by expanding condition provided by Lemma~\ref{lem:expan} and by the definition of $n(i)$, for all $j$ sufficiently large we have 
$$
1/i_j<d(\hat{f}^{n(i_j)-1}(x),q_{i_j})\leq d(\hat{f}^{n(i_j)}(x),x_{i_j})\leq 1/i_j
$$
which is a contradiction.
\end{proof}

The following result shows that there is no good candidate for set $D$ in Theorem~\ref{thm:B}\eqref{thm:B:3}
in the case of transitive maps.

	\begin{cor}\label{cor:transitive}
		If $f$ is a transitive and expanding Lorenz map, then for any periodic orbit $O$  of the map $\hat{f}$ we have 
		$$
		D_O=\overline{\bigcup_{n\geq0}\hat{f}^{-n}(O)}=\mathbb{X}.
		$$
	\end{cor}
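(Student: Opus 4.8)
The plan is to deduce this immediately from Theorem~\ref{thm:TransDenseOrbits}. First I would fix any single point $x\in O$. Since $O$ is a (finite) periodic orbit of $\hat f$, we have $x\in\mathbb{X}$, and for every $n\geq 0$ the elementary set-theoretic inclusion $\hat f^{-n}(x)\subseteq \hat f^{-n}(O)$ holds, hence $\bigcup_{k\geq 0}\hat f^{-k}(x)\subseteq \bigcup_{n\geq 0}\hat f^{-n}(O)$.

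Next I would invoke Theorem~\ref{thm:TransDenseOrbits}: since $f$ is transitive and expanding, the set $\bigcup_{k\geq 0}\hat f^{-k}(x)$ is dense in $\mathbb{X}$. A superset of a dense set is dense, so $\bigcup_{n\geq 0}\hat f^{-n}(O)$ is dense in $\mathbb{X}$ as well, and taking closures gives $D_O=\overline{\bigcup_{n\geq 0}\hat f^{-n}(O)}=\mathbb{X}$.

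Since the substantive work is already carried out in Theorem~\ref{thm:TransDenseOrbits} (resting in turn on Lemma~\ref{lem:PreimInv} and Lemma~\ref{lem:expan}), there is no real obstacle in this corollary; the only points requiring any attention are the trivial inclusion $\hat f^{-n}(x)\subseteq\hat f^{-n}(O)$ and the fact that density passes to supersets, neither of which needs the finiteness of $O$ beyond ensuring that $O$ indeed contains at least one point to which the theorem can be applied.
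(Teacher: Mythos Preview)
Your proposal is correct and matches the paper's approach: the corollary is stated immediately after Theorem~\ref{thm:TransDenseOrbits} with no separate proof, precisely because it follows at once by applying that theorem to any point $x\in O$ and using the trivial inclusion $\hat f^{-n}(x)\subseteq\hat f^{-n}(O)$.
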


\begin{cor}\label{cor:TransInv}
	If $f$ is a transitive and expanding Lorenz map without fixed points, then any proper closed set in $[0,1]$ is not completely invariant for $f$.
\end{cor}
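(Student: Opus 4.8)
The plan is to argue by contradiction. Suppose $E\subsetneq[0,1]$ is a nonempty closed set that is completely invariant for $f$; I will show that in fact $E=[0,1]$. The whole argument is powered by Theorem~\ref{thm:TransDenseOrbits}: for a transitive expanding Lorenz map the backward $\hat f$-orbit of \emph{every} point of $\mathbb{X}$ is dense in $\mathbb{X}$. Hence, as soon as one knows that $\pi^{-1}(E)$ is a closed set completely invariant for $\hat f$, picking any point of it and taking its backward orbit yields a dense subset of $\mathbb{X}$ contained in the closed set $\pi^{-1}(E)$, so $\pi^{-1}(E)=\mathbb{X}$ and therefore $E=\pi(\pi^{-1}(E))=[0,1]$. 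Thus the only thing that needs work is transporting complete invariance of $E$ from $[0,1]$ to complete invariance of $\pi^{-1}(E)$ on $\mathbb{X}$, which by Remark~\ref{rem:cominv} reduces to checking that $E\subseteq(0,c)\cup(c,1)$, i.e.\ that none of the three distinguished points $0$, $1$, $c$ lies in $E$. This is precisely where the ``no fixed points'' hypothesis is used; without it the statement is false (for the doubling map $x\mapsto 2x\bmod 1$ with the convention $f(c)=1$, the singleton $\{0\}$ is a proper, closed, completely invariant set).

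To rule out $0\in E$: since $E=f(E)$, there is $x\in E$ with $f(x)=0$. But $f$ is strictly increasing on $[0,c)$ with $f(0)\ge 0$ and $f(0)\ne 0$ (as $f$ has no fixed point), so $f>0$ on $[0,c)$, while on $(c,1]$ we have $f>\lim_{t\to c^+}f(t)=0$; hence necessarily $x=c$, so $c\in E$. Then $E=f^{-1}(E)$ gives $\bigcup_{n\ge 0}f^{-n}(\{c\})\subseteq E$, and this set is dense in $[0,1]$ because $f$ is expanding; since $E$ is closed, $E=[0,1]$, contradicting properness. The same argument with $1$ in place of $0$ shows $1\notin E$. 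Finally, if $c\in E$ then $f(c)\in f(E)=E$, but $f(c)\in\{0,1\}$, contradicting what was just proved; hence $c\notin E$.

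Therefore $E\subseteq(0,c)\cup(c,1)$, and by Remark~\ref{rem:cominv} the set $\hat E:=\pi^{-1}(E)$ is completely invariant for $\hat f$; it is closed, and since $E$ is proper and $\pi$ is surjective, $\hat E\ne\mathbb{X}$. Fixing any $\hat x\in\hat E$, complete invariance gives $\bigcup_{n\ge 0}\hat f^{-n}(\hat x)\subseteq\hat E$, which by Theorem~\ref{thm:TransDenseOrbits} is dense in $\mathbb{X}$; as $\hat E$ is closed this forces $\hat E=\mathbb{X}$, hence $E=[0,1]$ — a contradiction. The delicate part is the second paragraph, excluding $0$, $1$, $c$ from $E$: this is exactly where the absence of fixed points and the convention at $c$ matter, and it is the only step that is not an immediate consequence of results already established (chiefly Theorem~\ref{thm:TransDenseOrbits}).
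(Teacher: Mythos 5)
Your proof is correct and follows essentially the same route as the paper: show that $0$, $1$, $c$ cannot lie in the invariant set (using the no-fixed-point hypothesis and density of $\bigcup_n f^{-n}(c)$), pass to $\mathbb{X}$ via Remark~\ref{rem:cominv}, and invoke Theorem~\ref{thm:TransDenseOrbits}. The only cosmetic difference is the order in which the three distinguished points are excluded: the paper rules out $c$ first and then uses that to exclude $0,1$, whereas you exclude $0,1$ first (observing that otherwise $c\in E$ would be forced) and then $c$.
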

\begin{proof}
	Let $\hat{f}$ be the map induced on $\mathbb{X}$ and suppose there exists a proper and closed subset $B\subset[0,1]$ which is completely invariant under $f$. 
Since $f$ is expanding, $\bigcup_{k=0}^\infty f^{-k}(c)$ is dense in $[0,1]$ and therefore
$c\notin B$. Since $0,1$ are not fixed points, then their preimage is either empty or contains $c$.
In any case $0,1\notin B$ because otherwise either $B=[0,1]$ or $f(B)\neq B$.

By Remark~\ref{rem:cominv} the set $\hat{B}=\pi^{-1}(B)$ is completely invariant under $\hat{f}$. By continuity of the projection $\pi$ the set $\hat{B}$ is also closed,
which by Theorem~\ref{thm:TransDenseOrbits} yields that $\hat{B}=\mathbb{X}$ and consequently, by surjectivity of $\pi$, we have $[0,1]=\pi(\mathbb{X})=\pi(\pi^{-1}(B))= B$.
This is a contradiction, completing the proof.
\end{proof}

The following is an adaptation of Lemma~4.1 in \cite{Ding} to our context. The proof follows the same lines. We present it for the reader convenience.

\begin{lem}\label{lem:41Dingimproved}
Let $f$ be an expanding Lorenz map and let $\hat{f}$ be the map induced on $\mathbb{X}$. Denote
$$
m:=\min\{i\in\mathbb{N}_0:\hat{f}^{-i}\left(\{c_-,c_+\} \right) \cap[\hat{f}(0),\hat{f}(1)]\neq\emptyset\}.
$$
Then $\hat{f}^{(m+2)}$ has a fixed point.
\end{lem}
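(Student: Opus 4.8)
The plan is first to note that $m$ is finite. Since $f$ is expanding, $f(0)<f(1)$, so $I:=[\hat{f}(0),\hat{f}(1)]$ is a nondegenerate interval in $\mathbb{X}$; by Remark~\ref{rem:density} the set $\bigcup_{n\ge0}\hat{f}^{-n}(\{c_-,c_+\})$ is dense in $\mathbb{X}$, hence meets $I$, so the set defining $m$ is nonempty. Pick $p\in\hat{f}^{-m}(\{c_-,c_+\})\cap I$. By symmetry (reversing the orientation of $[0,1]$) we may assume $\hat{f}^m(p)=c_-$; then $\hat{f}^{m+1}(p)=\hat{f}(c_-)=1$ and $\hat{f}^{m+2}(p)=\hat{f}(1)$, and since $p\in I$ we already have $\hat{f}^{m+2}(p)\ge p$. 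What remains is to find a companion point with the reverse inequality lying in a closed interval on which $\hat{f}^{m+2}$ is an increasing homeomorphism, and then to invoke the standard intermediate value argument for such maps (which is valid in $\mathbb{X}$ because a homeomorphism of order intervals preserves adjacency).

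The monotonicity needed for this is supplied by the minimality of $m$. An easy induction shows that each $\hat{f}^i(I)$ is an interval, and for $i<m$ it is disjoint from $\{c_-,c_+\}$; since $c_-<c_+$ are adjacent in $\mathbb{X}$, such an interval lies in $[0,c_-]$ or in $[c_+,1]$, on each of which $\hat{f}$ is monotone. Hence $\hat{f}^i|_I$ is an increasing homeomorphism for every $i\le m$, so $\hat{f}^m(I)=[\hat{f}^{m+1}(0),\hat{f}^{m+1}(1)]$ is nondegenerate, contains $c_-$, and $p=(\hat{f}^m|_I)^{-1}(c_-)$. If the right endpoint $\hat{f}^{m+1}(1)$ equals $c_-$, then $\hat{f}^{m+2}(1)=\hat{f}(c_-)=1$, so $1$ is a fixed point of $\hat{f}^{m+2}$ and we are done; otherwise $c_+\in\hat{f}^m(I)$ as well, $p':=(\hat{f}^m|_I)^{-1}(c_+)$ is the immediate successor of $p$ in $I$, and, applying $\hat{f}$ once more on $[0,c_-]$ and on $[c_+,1]$, one finds that $\hat{f}^{m+1}$ maps $[\hat{f}(0),p]$ increasingly homeomorphically onto $[\hat{f}^{m+2}(0),1]$ and $[p',\hat{f}(1)]$ increasingly homeomorphically onto $[0,\hat{f}^{m+2}(1)]$, with $\hat{f}^{m+2}(p)=\hat{f}(1)$ and $\hat{f}^{m+2}(p')=\hat{f}(0)$.

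Composing with one more application of $\hat{f}$, a short case analysis on the positions of $\hat{f}^{m+2}(0)$ and $\hat{f}^{m+2}(1)$ relative to $c_-$, $c_+$ finishes the proof in all but one configuration. If $\hat{f}^{m+2}(0)\le c_-$, take the subinterval $[q,p]\subseteq[\hat{f}(0),p]$ with $q=(\hat{f}^{m+1}|_{[\hat{f}(0),p]})^{-1}(c_+)$: on it $\hat{f}^{m+2}$ is an increasing homeomorphism onto $[0,\hat{f}(1)]\supseteq[q,p]$, hence has a fixed point there. If $\hat{f}^{m+2}(0)\ge c_+$ and $\hat{f}^{m+3}(0)\le\hat{f}(0)$, then $\hat{f}^{m+2}$ maps $[\hat{f}(0),p]$ increasingly homeomorphically onto $[\hat{f}^{m+3}(0),\hat{f}(1)]\supseteq[\hat{f}(0),p]$, again yielding a fixed point; the mirror images of these two arguments, applied on $[p',\hat{f}(1)]$, settle the analogous cases for $\hat{f}^{m+2}(1)$, and the degenerate cases where some $\hat{f}^i(0)$ or $\hat{f}^i(1)$ equals $c$ place $0$ or $1$ among the fixed points of $\hat{f}^{m+2}$. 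The one configuration not covered by this scheme is $\hat{f}^{m+2}(0)\ge c_+$, $\hat{f}^{m+3}(0)>\hat{f}(0)$, $\hat{f}^{m+2}(1)\le c_-$, $\hat{f}^{m+3}(1)<\hat{f}(1)$; I expect this to be the main obstacle, since there $\hat{f}^{m+2}$ can fail to have a fixed point in $I$, and one has to iterate $\hat{f}^{m+2}$ once more and use the expanding property to locate the fixed point outside $I$.
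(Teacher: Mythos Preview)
Your proof is incomplete, as you yourself concede: the configuration $\hat{f}^{m+2}(0)\ge c_+$, $\hat{f}^{m+3}(0)>\hat{f}(0)$, $\hat{f}^{m+2}(1)\le c_-$, $\hat{f}^{m+3}(1)<\hat{f}(1)$ is left open, and the suggestion to ``iterate $\hat{f}^{m+2}$ once more and use the expanding property to locate the fixed point outside $I$'' is not an argument. In that configuration the two monotone branches of $\hat{f}^{m+2}$ on $I=[\hat{f}(0),\hat{f}(1)]$ behave like a Lorenz return map with no fixed point on $I$; the fixed point of $\hat{f}^{m+2}$ (which does exist) then lies \emph{outside} $I$, so a forward-iteration scheme starting from $I$ cannot see it, and a further iteration would only produce a fixed point of $\hat{f}^{2(m+2)}$.

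The paper avoids this obstruction by working backwards rather than forwards. By the minimality of $m$, the preimages $c_i\in\hat{f}^{-i}(c_-)$ are unique for $i=0,\ldots,m$ and all lie in the non-overlap region $(Q_1,Q_2)$, where $\hat{f}(Q_1)=\hat{f}(1)$ and $\hat{f}(Q_2)=\hat{f}(0)$; the point $c_m\in I$ then splits into two $(m{+}1)$-st preimages $c_{m+1}\le Q_1$ and $c'_{m+1}\ge Q_2$. Taking $c_{i_0}=\min\{c_0,\ldots,c_m\}$, one checks that $\hat{f}^{m+2}$ is monotone on $[c_{m+1},c_{i_0}]$ and that tracking this interval through the chain $c_{m+1}\mapsto c_{m-i_0}\mapsto\cdots$ forces $[c_{m+1},c_{i_0}]\subset\hat{f}^{m+2}([c_{m+1},c_{i_0}])$. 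The covering here is dictated by the combinatorics of the backward orbit and needs no case split on the positions of $\hat{f}^{m+2}(0)$, $\hat{f}^{m+2}(1)$. The paper then extracts the fixed point by projecting to $[0,1]$, verifying that $f^{m+2}$ is continuous on the projected open interval, and applying the ordinary intermediate value theorem there (with a limiting argument at the endpoints), rather than relying on an order-theoretic fixed-point principle in $\mathbb{X}$.
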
	
\begin{proof}
At first, observe that $\hat{f}(0)<\hat{f}(1)$ and $m$ is well-defined natural number, because $f$ is expanding. Moreover, note that there are two different points $z_0$, $z_1\in\mathbb{X}$ such that $\hat{f}(z_0)=z=\hat{f}(z_1)$ if and only if $z\in[\hat{f}(0),\hat{f}(1)]$. By the definition of $m$ we get $\hat{f}^{-m}(c_-)\cap[\hat{f}(0),\hat{f}(1)]\neq \emptyset$ or $\hat{f}^{-m}(c_+)\cap [\hat{f}(0),\hat{f}(1)]\neq \emptyset$. 
Let us assume that the first case holds. Then $|\hat{f}^{-i}(c_-)|=1$ for $i=0,1,\ldots,m$ and $|\hat{f}^{-(m+1)}(c_-)|=2$. Denote the unique point $\{c_i\}=\hat{f}^{-i}(c_-)$ for $i=0,1,\ldots,m$, let $\hat{f}^{-(m+1)}(c_-)=\{c_{m+1},c'_{m+1}\}$ with $c_{m+1}<c'_{m+1}$ and
let $Q_1\in[0,c_-]$ and $Q_2\in[c_+,1]$ be the points such that $\hat{f}(Q_1)=\hat{f}(1)$ and $\hat{f}(Q_2)=\hat{f}(0)$. By definition of $m$ we obtain the following  inequalities
$$
c_{m+1}\leq Q_1<c_i<Q_2\leq c'_{m+1},\quad\text{for}\ i=1,\ldots,m.
$$
Denote by $i_0$ the index $c_{i_0}=\min\{c_0,c_1,\ldots,c_m\}$ and observe that $c_-\not\in \hat{f}^{i}((c_{m+1},c_{i_0}))$ for $i=0,\ldots,m$.
Therefore $f^i((\pi(c_{m+1}),\pi(c_{i_0})))$ does not contain $c$ for $i=0,\ldots,m$.
By definition of $c_{m+1}$ also $f^{m+1}((\pi(c_{m+1}),\pi(c_{i_0})))$ does not contain $c$.
Therefore $f^i|_{(\pi(c_{m+1}),\pi(c_{i_0}))}$ is continuous (and monotone) for $i=0,\ldots, m+2$.
Furthermore
\begin{eqnarray*}
\hat{f}^{i_0}([c_{m+1},c_{i_0}])&=&[c_{m+1-i_0},c_0],\\
\hat{f}([c_{m+1-i_0},c_0])&=&[c_{m-i_0},1]\supset[c_{m-i_0},c'_{m+1}]
\end{eqnarray*}
and
\begin{eqnarray*}
\hat{f}^{m-i_0}([c_{m-i_0},c'_{m+1}])&=&[c_0,c_{i_0+1}]\supset[c_+,c_{i_0+1}],\\
\hat{f}([c_+,c_{i_0+1}])&=&[0,c_{i_0}]\supset[c_{m+1},c_{i_0}].
\end{eqnarray*}

This implies that
$$
[c_{m+1},c_{i_0}]\subset\hat{f}^{m+2}([c_{m+1},c_{i_0}]).
$$
But then $[\pi(c_{m+1}),\pi(c_{i_0})]\subset \overline{f^{m+2}((\pi(c_{m+1}),\pi(c_{i_0})))}$.
Extending $f^{m+2}|_{(\pi(c_{m+1}),\pi(c_{i_0}))}$ to a continuous map $\tilde{f}$ on $[\pi(c_{m+1}),\pi(c_{i_0})]$ we obtain a fixed point
$p\in [\pi(c_{m+1}),\pi(c_{i_0})]$ for $\tilde{f}$. If $p\in (\pi(c_{m+1}),\pi(c_{i_0}))$ then $p$ is also a fixed point of $f^{m+2}$
and therefore also $\hat{f}^{m+2}$. In the other case, by continuity of $\tilde f$, there is a sequence $\tilde{z}_j\in (\pi(c_{m+1}),\pi(c_{i_0}))$ such that $\lim_{j\to \infty}\tilde{z}_j=\lim_{j\to \infty}\tilde{f}(\tilde{z}_j)=p$.
Since points $\tilde{z}_j$ are pairwise distinct and $\lim_{j\to 
	\infty} |\tilde{f}(\tilde{z}_j)-\tilde{z}_j|=0$
taking $z_j\in \pi^{-1}(\tilde{z_j})\subset (c_{m+1},c_{i_0})$ we obtain that $\lim_{j\to 
	\infty} d(\hat{f}^{m+2}(z_j),z_j)=0$. But this implies that $\hat{f}^{m+2}$ has a fixed point in $[c_{m+1},c_{i_0}]$.

The case $\hat{f}^{-m}(c_+)\in[\hat{f}(0),\hat{f}(1)]$ is dealt similarly.
\end{proof}

We also present a version of Lemma~4.2 from \cite{Ding} with appropriately adjusted proof.

\begin{lem}\label{lem:32Dingimproved}
	Let $f$ be an expanding Lorenz map, $\hat{f}$ be the map induced on $\mathbb{X}$ and $1 < \kappa < \infty$ be the smallest period of the periodic points of $\hat{f}$. Suppose $\hat{O}$ is a $\kappa$-periodic orbit of $\hat{f}$,
	$\hat{P}_L =\max\{x\in\hat{O}:x\leq c_-\}$, $\hat{P}_R =\min\{x\in\hat{O}:x\geq c_+\}$. If $\hat{P}_L\neq c_-$ and $\hat{P}_R\neq c_+$, then we have 
	$$
	N((\pi(\hat{P}_L), c)) = N((c, \pi(\hat{P}_R))) = \kappa.
	$$
\end{lem}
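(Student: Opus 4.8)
The plan is to set $P_L=\pi(\hat P_L)$, $P_R=\pi(\hat P_R)$, $\ell=N((P_L,c))$ and $r=N((c,P_R))$, and to prove $\ell=\kappa$; the equality $r=\kappa$ will then follow by the mirror argument, since both branches of $f$ are increasing. The hypotheses $\hat P_L\neq c_-$ and $\hat P_R\neq c_+$ are used throughout in the following guise: they force $c_-,c_+\notin\hat O$, so the whole $\hat f$-orbit of $\hat P_L$ avoids $\{c_-,c_+\}$, $\pi$ is injective on $\hat O$, each $\hat f^i$ is continuous at $\hat P_L$, and $P_L$ is \emph{not} a preimage of $c$, whence $\pi^{-1}(P_L)=\{\hat P_L\}$; moreover $\hat f^\kappa(\hat P_L)=\hat P_L$, so $f^\kappa(P_L)=P_L$, and likewise for $P_R$. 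Note $\hat P_L<c_-$, so $(P_L,c)\subset[0,c)$ lies in one branch of monotonicity of $f$; similarly $(c,P_R)\subset(c,1]$.

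\textbf{Upper bound $\ell\le\kappa$.} First I would suppose $\ell>\kappa$. By definition of $N$ we then have $c\notin f^i((P_L,c))$ for $i\le\kappa$, so $f^\kappa$ is continuous and strictly increasing on $[P_L,c)$ and extends continuously to $[P_L,c]$ with $f^\kappa(P_L)=P_L$. Since $\inf f'>1$, the map $f^\kappa$ strictly expands lengths on this interval, hence $f^\kappa(c^-)-P_L=|f^\kappa([P_L,c])|>c-P_L$, i.e.\ $f^\kappa(c^-)>c$. By the intermediate value theorem there is $z\in(P_L,c)$ with $f^\kappa(z)=c$, so $N((P_L,c))\le\kappa$, a contradiction. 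Thus $\ell\le\kappa$ (and, by the same reasoning on $(c,P_R)$, $r\le\kappa$).

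\textbf{Lower bound $\ell\ge\kappa$.} Next I would suppose $\ell<\kappa$. Then $f^i$ is continuous and strictly increasing on $(P_L,c)$ for $i\le\ell$; using $c_\pm\notin\hat O$ and the minimality of $\kappa$ one checks in addition that no iterate $\hat f^i(c_-)$ with $i<\ell$ lies in $\{c_-,c_+\}$ (an equality $\hat f^i(c_-)=c_-$ gives a periodic orbit of period $i<\kappa$, and $\hat f^i(c_-)=c_+$ forces a point of $(P_L,c)$ mapping to $c$ before time $\ell$), so $\hat f^\ell$ is continuous and strictly increasing on $[\hat P_L,c_-]$. Let $z$ be the smallest point of $(P_L,c)$ with $f^\ell(z)=c$; it exists, lies strictly above $P_L$ (otherwise $\pi(\hat f^\ell(\hat P_L))=\lim_{x\to P_L^+}f^\ell(x)=c$, contradicting $c_\pm\notin\hat O$), and, being a preimage of $c$, splits in $\mathbb{X}$ into $\hat z_-<\hat z_+$ with $\hat f^\ell(\hat z_-)=c_-$. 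Now $\hat f^\ell(\hat P_L)\in\hat O$ projects to the left endpoint of $f^\ell([P_L,c])$, which is $<c$; hence $\hat f^\ell(\hat P_L)<c_-$, and maximality of $\hat P_L$ among elements of $\hat O$ that are $\le c_-$, together with $\hat f^\ell(\hat P_L)\neq\hat P_L$ (as $\ell<\kappa$), gives $\hat f^\ell(\hat P_L)<\hat P_L$, i.e.\ $f^\ell(P_L)<P_L$. Therefore $f^\ell$ is continuous on $[P_L,z]$ with $f^\ell(P_L)<P_L$ and $f^\ell(z)=c>z$, so the intermediate value theorem produces $p\in(P_L,z)$ with $f^\ell(p)=p$. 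Since $f^i(p)\neq c$ for $0\le i\le\ell$ (because $N((P_L,c))=\ell$ and $p$ lies strictly to the left of the first such preimage $z$), $p$ is a genuine periodic point of $f$ of period dividing $\ell$, hence of period $\le\ell<\kappa$ — contradicting the minimality of $\kappa$. Thus $\ell\ge\kappa$, and combined with the upper bound, $\ell=\kappa$. The identity $r=\kappa$ follows from the symmetric argument on $(c,P_R)$, which completes the proof.

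\textbf{Main obstacle.} The delicate step — and exactly where the unrestricted Lemma~\ref{lem32Ding} breaks down, cf.\ Example~\ref{lem:tBce} — is the inequality $\hat f^\ell(\hat P_L)<\hat P_L$ in the lower bound. It rests on $\hat P_L\neq c_-$, which guarantees that $\hat P_L$ is an honest orbit point with forward orbit missing $\{c_-,c_+\}$ and, crucially, that $P_L$ itself is not a preimage of $c$; when $c_-\in O$ this fails and the open interval $(P_L,c)$ can skip a short route to $c$, so that $N((P_L,c))$ exceeds $\kappa$. The auxiliary check that $\hat f^\ell$ remains continuous on the \emph{closed} interval $[\hat P_L,c_-]$ is routine bookkeeping but genuinely needed, and it again leans on $\hat P_L\neq c_-$ and on $\ell<\kappa$.
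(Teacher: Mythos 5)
Your proof is correct and takes essentially the same approach as the paper: both argue by contradiction, treating the cases $\ell > \kappa$ (using $\hat{f}^\kappa(\hat{P}_L) = \hat{P}_L$ and expansion to produce a preimage of $c$ inside $(\hat{P}_L, c_-)$) and $\ell < \kappa$ (using $\hat{f}^\ell(\hat{P}_L) < \hat{P}_L$ together with $\hat{f}^\ell(z) \in \{c_-, c_+\}$ to force a periodic point of period at most $\ell < \kappa$). The only place you expend more effort than the paper is the bookkeeping that $\hat{f}^\ell$ is order-preserving on $[\hat{P}_L, c_-]$; your parenthetical justification (ruling out $\hat{f}^i(c_-) \in \{c_-, c_+\}$ for $i < \ell$) implicitly relies on the inductive fact that $\hat{f}^i$ is already order-preserving there, which in turn rests on $N((P_L,c)) = \ell$ and $c_\pm \notin \hat{O}$ keeping the images $\hat{f}^i([\hat{P}_L, c_-])$ on one side of the hole — a small circularity that resolves cleanly by induction, exactly as the paper tacitly assumes.
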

\begin{proof}
Observe that $\hat{P}_L\neq c_-$ and $\hat{P}_R\neq c_+$ implies $\pi(\hat{P}_L)<c$ and $\pi(\hat{P}_R)>c$. So 
$$
\eta:=N((\pi(\hat{P}_L), c))\quad\text{and}\quad\xi:=N((c, \pi(\hat{P}_R)))
$$ 
are well-defined natural numbers. Suppose $\eta\neq\kappa$ and consider the following cases:

\textbf{Case 1.} $\eta>\kappa$. Since $\hat{f}^\kappa(\hat{P}_L)=\hat{P}_L$, we must have $\hat{f}^\kappa((\hat{P}_L,c_-))\subset(\hat{P}_L,c_-)$. Therefore we obtain a contradiction, because $f$ is expanding.

\textbf{Case 2.} $\eta<\kappa$. In this case there is a point $z\in(\hat{P}_L, c_-)$ such that $\pi(\hat{f}^\eta(z))=c$, i.e. $\hat{f}^\eta(z)=c_-$ or $\hat{f}^\eta(z)=c_+$. Hence, for the interval $[\hat{P}_L,z]$ we obtain
$$
\hat{f}^\eta([\hat{P}_L,z])=[\hat{f}^\eta(\hat{P}_L),\hat{f}^\eta(z)]\supset[\hat{P}_L,z],
$$
because $\hat{f}^\eta(\hat{P}_L)<\hat{P}_L$. 
Note that $f^\eta|_{(\pi(\hat{P}_L),\pi(z))}$ is monotone and
$[\pi(\hat{P}_L),\pi(z)]\subset (f^\eta(\pi(\hat{P}_L)),f^\eta(\pi(z)))$.
This implies by continuity of $f^\eta$ that there is $[a,b]\subset (\pi(\hat{P}_L),\pi(z))$
such that $f^\eta ([a,b])\supset [a,b]$.
So there exists a periodic point $p\in(\pi(\hat{P}_L),\pi(z))$ for $f$ with period $\eta<\kappa$. 
But then $\hat{f}$ also has a point of period $\eta$.
We get a contradiction with definition of $\kappa$.

Therefore $\eta=\kappa$. Similarly we prove that $\xi=\kappa$.
\end{proof}

\begin{thm}\label{cor:srong-trans}
Let $f$ be a transitive and expanding Lorenz map  and let $\hat{f}$  be the map induced on $\mathbb{X}$. 
Then $\hat{f}$ is strongly transitive, that is
for every nonempty open set $U\subset \mathbb{X}$ there is $k$ such that $\bigcup_{i=0}^k \hat{f}^{i}(U)=\mathbb{X}$.
\end{thm}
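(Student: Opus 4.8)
The plan is to use transitivity together with the just-established density of backward orbits (Theorem~\ref{thm:TransDenseOrbits}) and a compactness argument. Fix a nonempty open set $U\subset\mathbb{X}$. First I would shrink $U$ if necessary so that it is of the form $\pi^{-1}(\pi(U))$ with $\pi(U)$ a nonempty open interval; this is harmless since enlarging the forward images can only help. Since $f$ is expanding and $\bigcup_{n\geq 0}\hat f^{-n}(c_-)$, $\bigcup_{n\geq 0}\hat f^{-n}(c_+)$ are dense in $\mathbb{X}$ (Remark~\ref{rem:density}), there is some $N_0$ and an open $J\subset U$ such that $\hat f^{N_0}$ maps $J$ (homeomorphically) onto $[0,c_-]$ or onto $[c_+,1]$; iterating once more we get that $\hat f^{N_0+1}(J)$ contains $[0,\hat f(0)]$ or $[\hat f(1),1]$. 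In fact, using that $f$ is a Lorenz map with two onto branches, one shows that there is $M$ with $\hat f^M(U)=\mathbb{X}$: take a subinterval on which some iterate is a homeomorphism onto $[0,c_-]$, apply $\hat f$ to cover $[0,1]$ in the interval picture (since $f([0,c))=[f(0),1)$ and $f([c,1])=(0,f(1)]$, two more iterates recover all of $[0,1]$ because $f(0)$ and $f(1)$ have full forward images by transitivity; more cleanly, use \emph{strongly locally eventually onto}-type reasoning, but we only need that \emph{some} iterate of $U$ is onto).

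Alternatively — and this is the cleaner route — I would argue as follows. By Theorem~\ref{thm:TransDenseOrbits}, for the point $c_-$ the set $\bigcup_{k\geq 0}\hat f^{-k}(c_-)$ is dense; hence there is $k_1$ with $\hat f^{-k_1}(c_-)\cap U\neq\emptyset$, and similarly $k_2$ with $\hat f^{-k_2}(c_+)\cap U\neq\emptyset$. Pick small subintervals $J_1,J_2\subset U$ mapped homeomorphically by $\hat f^{k_1}$, $\hat f^{k_2}$ onto neighborhoods of $c_-$ in $[0,c_-]$ and of $c_+$ in $[c_+,1]$ respectively. Then $\hat f^{k_1+1}(J_1)$ is a neighborhood of $1$ of the form $[a,1]$ and $\hat f^{k_2+1}(J_2)$ is a neighborhood of $0$ of the form $[0,b]$. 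Since $f$ is transitive and expanding, iterating $[0,b]$ forward its length grows until it covers $[0,c_-]$ (it cannot stay bounded away from $c$ by expansion and it cannot be trapped in a proper invariant interval by transitivity), say $\hat f^{s}([0,b])\supset[0,c_-]$; then $\hat f^{s+1}([0,b])\supset[\hat f(0),1]$, and similarly from $[a,1]$ we eventually cover $[c_+,1]$ and then $[0,\hat f(1)]$. Combining, for some $K$ we have $\hat f^{K}(J_1)\cup\hat f^{K+1}(J_1)\cup\cdots$ together with the images of $J_2$ cover $[0,\hat f(1)]\cup[\hat f(0),1]=[0,1]$ since $\hat f(0)\leq\hat f(1)$ forces these two intervals to overlap or at worst abut at a doubling point, and one more iterate closes the gap. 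This produces a single $k$ with $\bigcup_{i=0}^{k}\hat f^i(U)=\mathbb{X}$.

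The main obstacle is the bookkeeping of doubling points at the endpoints $0,1,c_-,c_+$: the map $\hat f$ is not open there, and the images of half-open intervals are half-open, so one must be careful that, e.g., $\hat f([0,c_-])=[\hat f(0),1]$ and $\hat f([c_+,1])=[0,\hat f(1)]$, and that the union $[0,\hat f(1)]\cup[\hat f(0),1]$ really is all of $\mathbb{X}$ — this uses $\hat f(0)\leq\hat f(1)$ (strict for expanding maps, since otherwise there would be a fixed point issue) so the two intervals overlap and their union is $[0,1]=\pi(\mathbb{X})$, with the preimage under $\pi$ giving all of $\mathbb{X}$. Once one commits to working in $\mathbb{X}$ and tracks endpoints via the rule from the doubling-points construction (send endpoints of a hole to endpoints of the image hole), this is routine. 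The finiteness of $k$ is immediate since each of the finitely many steps (reaching $c_-$, reaching $c_+$, spreading to cover $[0,c_-]$, spreading to cover $[c_+,1]$) is achieved in finitely many iterations, the last two by expansion plus transitivity ruling out a proper forward-invariant interval.
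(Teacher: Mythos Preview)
Your proposal has a genuine gap at the central step. You claim that from a small interval $[0,b]$ one iterates forward until $\hat f^{s}([0,b])\supset[0,c_-]$ for some $s$, arguing that the interval ``cannot stay bounded away from $c$ by expansion and cannot be trapped in a proper invariant interval by transitivity''. This is insufficient. The map $\hat f$ is not open at $0$ and $1$: one has $\hat f([0,b))=[\hat f(0),\hat f(b))$, and $\hat f(0)$ is in general not an endpoint of a hole, so this image is not open. Iterating further, the closed endpoints of the pieces of $\hat f^{n}([0,b])$ follow the forward orbit of $0$. If $\hat f(0)$ happens to be periodic---say its orbit is the minimal cycle $\hat O$---then every image $\hat f^{n}(U)$ has its non-open boundary contained in $\Orb(\hat f(0))\cup\Orb(\hat f(1))$, and a point such as $\hat P_L=\max\{x\in\hat O:x\leq c_-\}$ may sit permanently on the boundary of these images and never in their interior; no single $\hat f^{s}([0,b])$ need ever contain $[0,c_-]$. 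The real obstruction is not a proper forward-invariant interval but the possibility $\hat f^{-1}(\hat O)=\hat O\cup\{0\}$ (or $\hat O\cup\{1\}$), and your argument does not touch this. Your first alternative fails for a different reason: Lorenz maps generally have $f(0)>0$ and $f(1)<1$, so the branches are \emph{not} onto and ``two onto branches'' is simply false.

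The paper's proof confronts this directly. It first records that $\hat f^{n}(U)\setminus(\Orb(\hat f(0))\cup\Orb(\hat f(1)))\subset\Int\hat f^{n}(U)$, so by Theorem~\ref{thm:TransDenseOrbits} the only points possibly missing from $\bigcup_{n}\Int\hat f^{n}(U)$ lie on $\Orb(\hat f(0))\cup\Orb(\hat f(1))$. It then splits into cases according to whether $0$, $1$, $\hat f(0)$, $\hat f(1)$ are periodic. The delicate case is when one of $\hat f(0),\hat f(1)$ is periodic but $0,1$ are not: there one shows that if no two-sided neighbourhood of $\hat P_L$ can be assembled from finitely many $\hat f^{i}(U)$, then necessarily $\hat f^{-1}(\hat O)=\hat O\cup\{0\}$ or $\hat O\cup\{1\}$, which forces $\pi(\hat O)$ to be a primary $n(k)$-cycle with $f(0)=z_{k-1}$ or $f(1)=z_k$, contradicting transitivity by \cite[Theorem~6.5]{OPR}. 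Once $[\hat P_L,\hat P_R]$ is covered by finitely many images, Lemmas~\ref{lem:41Dingimproved} and~\ref{lem:32Dingimproved} are used to show $\bigcup_{i=0}^{n-1}\hat f^{i}([\hat P_L,\hat P_R])=\mathbb{X}$ explicitly, yielding the finite $k$.
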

\begin{proof}
Fix any open set $U\subset \mathbb{X}$. Firstly, observe that
$$
\hat{f}^n(U)\setminus\left( \Orb(\hat{f}(0))\cup\Orb(\hat{f}(1))\right)\subset\Int\hat{f}^n(U)
$$
for each $n\in\mathbb{N}_{0}$. Hence, by Theorem~\ref{thm:TransDenseOrbits} we obtain
$$
\begin{aligned}
\mathbb{X}\setminus\left( \Orb(\hat{f}(0))\cup\Orb(\hat{f}(1))\right)&=\bigcup_{n=0}^\infty\left[  \hat{f}^n(U)\setminus\left( \Orb(\hat{f}(0))\cup\Orb(\hat{f}(1))\right)\right]\\
&\subset\bigcup_{n=0}^\infty\Int\hat{f}^n(U).
\end{aligned}
$$

\textbf{Case 1}.  Assume that the points $\hat{f}(0)$ and $\hat{f}(1)$ are not periodic. We are going to show that in this case the orbits of the points $\hat{f}(0)$ and $\hat{f}(1)$ are contained in $\bigcup_{n=0}^\infty\Int\hat{f}^n(U)$. Since $f(0)<f(1)$, there is a point $x_0\in(c_+,1)$ such that $\hat{f}(x_0)=\hat{f}(0)$. Similarly, there is a point $x_1\in(0,c_-)$ such that $\hat{f}(x_1)=\hat{f}(1)$. 

We claim that at least one of the points $x_0$ and $x_1$ does not belong to the set $\Orb(\hat{f}(0))\cup\Orb(\hat{f}(1))$. Indeed, suppose $x_0$, $x_1\in\Orb(\hat{f}(0))\cup\Orb(\hat{f}(1))$. We must have $x_0\in\Orb(\hat{f}(1))$ and $x_1\in\Orb(\hat{f}(0))$. But then $\hat{f}^{i}(1)=x_0$, $\hat{f}^{j}(0)=x_1$ and therefore $\hat{f}^{i+j+1}(1)=\hat{f}(1)$ for some $i,j\geq 1$, which is a contradiction with our assumption, proving the claim. 

Let $x_0\notin\Orb(\hat{f}(0))\cup\Orb(\hat{f}(1))$. Then $x_0\in\bigcup_{n=0}^\infty\Int\hat{f}^n(U)$ which implies $\Orb(x_0)\subset\bigcup_{n=0}^\infty\Int\hat{f}^n(U)$, because $0$, $1\notin\Orb(x_0)$. Moreover, either $x_1\notin\Orb(\hat{f}(0))\cup\Orb(\hat{f}(1))$ or $x_1\in\Orb(\hat{f}(0))\subset \Orb(x_0)$.
In the second case it is clear that $\Orb(x_1)\subset\bigcup_{n=0}^\infty\Int\hat{f}^n(U)$.
In the first case it is clear that $1\not\in \Orb(x_1)$ because $\hat{f}(1)$ is not periodic, and also $0\not\in \Orb(x_1)$ because by definition $c_+\not\in \Orb(1)$.
Therefore, repeating argument applied before to  $x_0$, we see that also in this case $\Orb(x_1)\subset\bigcup_{n=0}^\infty\Int\hat{f}^n(U)$. Therefore $\bigcup_{n=0}^\infty\Int\hat{f}^n(U)=\mathbb{X}$.

\textbf{Case 2}. Assume that the points $0$ and $1$ are not periodic, but at least one of $\hat{f}(0)$ and $\hat{f}(1)$ is. Let $\hat{O}$ be the minimal cycle of $\hat{f}$. Clearly $c_-$, $c_+\notin \hat{O}$, hence the points 
$$
\hat{P}_L:=\max\{x\in \hat{O}: x<c_-\}\quad\text{and}\quad\hat{P}_R:=\min\{x\in \hat{O}: x>c_+\}
$$
are well-defined. Moreover, by Theorem~\ref{thm:TransDenseOrbits} we have $\hat{P}_L$, $\hat{P}_R\in\bigcup_{i=0}^\infty\hat{f}^i(U)$. First, we will prove that there are $l$, $r\in\mathbb{N}_0$ such that the set $\hat{f}^l(U)$ contains a left-sided neighborhood of $\hat{P}_L$ and the set $\hat{f}^r(U)$ contains a right-sided neighborhood of $\hat{P}_L$, i.e. $(a,\hat{P}_L]\subset\hat{f}^l(U)$ and $[\hat{P}_L,b)\subset\hat{f}^r(U)$ for some points $a<\hat{P}_L<b$.  Suppose it is not the case.
In particular $\hat{P}_L\not\in\bigcup_{n=0}^\infty \Int\hat{f}^n(U)$, hence  $\hat{P}_L\in \Orb(\hat{f}(0))\cup\Orb(\hat{f}(1))$.
Assume that there is a point $b>\hat{P}_L$ such that $[\hat{P}_L,b)\subset\hat{f}^r(U)$ for some $r\in\mathbb{N}_0$, but for any $i\in\mathbb{N}_0$ and $a<\hat{P}_L$ there is a point $x\in(a,\hat{P}_L)$ such that $x\notin\hat{f}^i(U)$. 
Note that in the case $\hat{P}_L\not\in \orb(\hat{f}(0))$, if we pick $z_0\in U$ such that $\hat{f}^i(z_0)=\hat{P}_L$ then for some $e$ sufficiently close to $z_0$ we have
$(e,z_0]\subset U$ and $(\hat{f}^i(e),\hat{P}_L]=(\hat{f}^i(e),\hat{f}^i(z_0)]\subset \hat{f}^i(U)$ which is a contradiction, therefore $\hat{P}_L\in\orb(\hat{f}(0))$. We claim that $\hat{f}(0)$ is periodic. 
If we assume the contrary then
there is a point $x_0\in(c_+,1)$ such that $\hat{f}(x_0)=\hat{f}(0)$ and $x_0\not\in \orb(\hat{f}(0))$. Then $x_0\in \orb(\hat{f}(1))$ because otherwise $x_0\in \Int \hat{f}^j(U)$
for some $j$, and since we are not passing through any of $c_+,c_-$, this gives $\hat{P}_L\in\Int\hat{f}^n(U)$ for some $n$ which is a contradiction. But $x_0\in \orb(\hat{f}(1))$ is a contradiction as well, since $\hat{f}(1)$ is periodic while $x_0$ cannot.  Indeed, the claim holds and $\hat{f}(0)$ is periodic, which gives $\orb(\hat{f}(0))=\hat{O}$.
The symmetric case $(a,\hat{P}_L]\subset\hat{f}^l(U)$ leads to $\orb(\hat{f}(1))=\hat{O}$.

Let us assume that the first case holds, that is $\orb(\hat{f}(0))=\hat{O}$ and $\hat{f}^{-1}(\hat{O})=\hat{O}\cup\{0\}$.
Since $c_-$, $c_+\notin \hat{O}$, we can order the points from the orbit $\hat{O}$ as follows
\begin{equation}
\label{eq:nkcycle1}
\hat{z}_0<\hat{z}_1<\ldots<\hat{z}_{n-k-1}<c_-<c_+<\hat{z}_{n-k}<\ldots<\hat{z}_{n-1}
\end{equation}
for some $k\in\{1,2,\ldots,n-1\}$. Now, we are going to show that the orbit $O=\pi(\hat{O})$ forms a primary $n(k)$-cycle for $f$. 
Denote $z_i=\pi(\hat{z}_i)$ for $i=0,1,\ldots,n-1$. 
By monotonicity $\hat{f}(\hat{z}_{n-k+j})=\hat{z}_{j}$ for $j=0,1,\ldots,k-1$ as otherwise 
$$
Z=\{\hat{z}_0,\hat{z}_1,\ldots, \hat{z}_{k-1}\}\setminus \{\hat{f}(\hat{z}_{n-k}),\hat{f}(\hat{z}_{n-k+1}),\ldots, \hat{f}(\hat{z}_{n-1})\}\neq \emptyset.
$$
Clearly, we must have $\hat{z}_0=\hat{f}(\hat{z}_{n-k})$ and let $y=\min Z$. Then $f(\hat{z}_0)=y$
and there is also $i>n-k$ such that $\hat{f}(\hat{z}_{i})$ is to the right of $y$.
Then $y$ has a preimage in $(c_+,0)$ that does not belong to $\hat{O}$,
which is a contradiction with $\hat{f}^{-1}(\hat{O})=\hat{O}\cup\{0\}$.
By induction, it extends to
$\hat{f}(\hat{z}_i)=\hat{z}_{i+k}$ for $i=0,1,\ldots,n-k-1$, so $f(z_i)=z_{i+k (\text{mod }n)}$ for all $i=0,1,\ldots,n-1$. 
By definition $0<\hat{z}_0$, thus $\hat{f}(0)<\hat{z}_k$.
If $\hat{f}(0)<\hat{z}_{k-1}$ then there is a point $x\in(0,\hat{z}_0)$ such that $\hat{f}(x)=\hat{z}_{k-1}$ which is a contradiction with $\hat{f}^{-1}(\hat{O})=\hat{O}\cup\{0\}$. So $\hat{f}(0)\geq\hat{z}_{k-1}$ and therefore $\hat{f}(0)=\hat{z}_{k-1}$. By the same argument applied to $\hat{f}(1)$ we obtain that $\hat{f}(1)\in(\hat{z}_{k-1},\hat{z}_k]$. 
Using the assumption about $\hat{f}^{-1}(\hat{O})$ for the last time, we obtain $1\notin\hat{f}^{-1}(\hat{O})$ and so $\hat{f}(1)\in(\hat{z}_{k-1},\hat{z}_k)$. Then the points
$$
z_0<z_1<\ldots<z_{n-k-1}<c<z_{n-k}<\ldots<z_{n-1}
$$
form primary $n(k)$-cycle for $f$ such that $f(1)<z_k$. This by \cite[Theorem 6.5]{OPR} implies that $f$ is not transitive, which is a contradiction.
Therefore $(a,\hat{P}_L]\subset\hat{f}^l(U)$, $[\hat{P}_L,b)\subset\hat{f}^r(U)$ for some $l$, $r\in\mathbb{N}_0$. 
The proof for symmetric case $\hat{f}^{-1}(\hat{O})=\hat{O}\cup\{1\}$ with $\orb(\hat{f}(1))=\hat{O}$ is analogous.

Since $[\hat{P}_L,b)\subset \hat{f}^r(U)$ for some $r$ and $f$ is expanding, there is $i\geq 0$
such that $[\hat{P}_L,c_+]\subset\hat{f}^{r+i}(U)$. Similarly, there is the smallest iteration $s\geq 0$
such that $c_-\in (\hat{f}^s(a),\hat{f}^s(\hat{P}_L)]$ and therefore $[c_-,\hat{P}_R]\subset\hat{f}^{l+s}(U)$.
	It follows that there is $m\in\mathbb{N}_0$ such that
	\begin{equation}\label{PLPR=covered}
	[\hat{P}_L,\hat{P}_R]\subset\bigcup_{i=0}^{m}\hat{f}^i(U).
	\end{equation}

Now we will prove that
$$
\bigcup_{i=0}^{n-1}\hat{f}^i([\hat{P}_L,\hat{P}_R]) =\mathbb{X},
$$
where $n=|\hat{O}|$. Observe that
$$
\hat{f}([\hat{P}_L,c_-])=[\hat{f}(\hat{P}_L),1]\subset (c_+,1],\quad \hat{f}^2([\hat{P}_L,c_-])=[\hat{f}^2(\hat{P}_L),\hat{f}(1)]
$$
and
$$
\hat{f}([c_+,\hat{P}_R])=[0,\hat{f}(\hat{P}_R)]\subset [0,c_-),\quad \hat{f}^2([c_+,\hat{P}_R])=[\hat{f}(0),\hat{f}^2(\hat{P}_R)].
$$
	By Lemma~\ref{lem:32Dingimproved} we know that	
\begin{equation}
\label{eq:cnn}
N((\pi(\hat{P}_L), c)) = N((c, \pi(\hat{P}_R)))=n.
\end{equation}
It means that $c_-$, $c_+\notin\hat{f}^i((\hat{P}_L,c_-))$ and $c_-$, $c_+\notin\hat{f}^i((c_+,\hat{P}_R))$ for $i=0,1,\ldots,n-1$. Furthermore $c_-$, $c_+\notin\hat{f}^i([\hat{P}_L,\hat{P}_R])$ for $i=1,\ldots,n-1$, because $c_-$, $c_+$ are not periodic. 

We also get that there are $c_n\in(\hat{P}_L,c_-)$ and $c'_n\in(c_+,\hat{P}_R)$ such that $\pi(\hat{f}^n(c_n))=\pi(\hat{f}^n(c'_n))=c$. Hence 
\begin{equation}
\label{pre-z}
c_{n-2}:=\hat{f}^2(c_n)\in(\hat{f}^2(\hat{P}_L),\hat{f}(1)),\quad c'_{n-2}:=\hat{f}^2(c'_n)\in(\hat{f}(0),\hat{f}^2(\hat{P}_R))
\end{equation}
and $\pi(\hat{f}^{n-2}(c_{n-2}))=\pi(\hat{f}^{n-2}(c'_{n-2}))=c$. Suppose that $\pi(c_{n-2})\neq \pi(c'_{n-2})$. 
Then $|\hat{f}^{-(n-2)}(c_-)|>1$ and $|\hat{f}^{-(n-2)}(c_+)|>1$ by \eqref{pre-z}, 
which implies that
$$
m=\min\{i\in\mathbb{N}_0:\hat{f}^{-i}\left(\{c_-,c_+\} \right) \cap[\hat{f}(0),\hat{f}(1)]\neq\emptyset\}\leq n-3
$$
and by Lemma~\ref{lem:41Dingimproved} there exists a periodic point with period $m+2<n$. Hence we obtain a contradiction, because $n$ is the smallest period of the periodic points of $\hat{f}$. 

So $\pi(c_{n-2})= \pi(c'_{n-2})$ and therefore by \eqref{pre-z} we have $(\hat{f}^2(\hat{P}_L),\hat{f}(1))\cap(\hat{f}(0),\hat{f}^2(\hat{P}_R))\neq\emptyset$ which in particular implies that $\hat{f}^2(\hat{P}_L)<\hat{f}^2(\hat{P}_R)$. Since $\hat{f}(0)<\hat{f}(1)$ and $c_-$, $c_+\notin\hat{f}^i([\hat{P}_L,\hat{P}_R])$ for $i=1,\ldots,n-1$, we have
$$
[\hat{f}^2(\hat{P}_L),\hat{f}^2(\hat{P}_R)]\subset\hat{f}^2([\hat{P}_L,\hat{P}_R])
$$
and
\begin{eqnarray*}
[\hat{f}^{j+2}(\hat{P}_L),\hat{f}^{j+2}(\hat{P}_R)]&=&\hat{f}^j([\hat{f}^2(\hat{P}_L),\hat{f}^2(\hat{P}_R)])\\
&\subset&\hat{f}^{j}(\hat{f}^2([\hat{P}_L,\hat{P}_R]))
=\hat{f}^{j+2}([\hat{P}_L,\hat{P}_R])
\end{eqnarray*}
for $j=0,1,\ldots,n-2$. Therefore
$$
\bigcup_{i=0}^{n-1}\hat{f}^i([\hat{P}_L,\hat{P}_R]) =\mathbb{X}
$$
which by \eqref{PLPR=covered} for some $m$ gives
$$
\mathbb{X}=\bigcup_{i=0}^{n-1}\hat{f}^i([\hat{P}_L,\hat{P}_R])\subset\bigcup_{i=0}^{m+n-1}\hat{f}^i(U).
$$

\textbf{Case 3}.  As the last possibility, assume that $0$ is a periodic point of period $s$(the case of $1$ periodic is dealt the same way). There is $x\in \pi^{-1}(\Int \pi(U))$ and a minimal $n$ such that $\hat{f}^n(x)=c_+$.
Then $0\in \Int \hat{f}^{n+1}(U)$. This covers the case when $0$ is fixed point, so let us assume that $0$ is not a fixed point.
If we take small neighborhood of $0$, say $[0,\eps)\subset \Int \hat{f}^{n+1}(U)$ then $\hat{f}^i([0,\eps))$ is an open set in $\mathbb{X}$ for $i=0,1,\ldots,s-1$
	because $\hat{f}^i(0)$ is the right endpoint of a hole in $\mathbb{X}$.
Therefore $\orb(0)\subset \bigcup_{n=0}^\infty\Int\hat{f}^n(U).$ If $1$ is also periodic then the proof is completed by a symmetric argument, so assume that $1$ is not periodic.

Assume that $\hat f(1)$ is not periodic.
Since $f(0)<f(1)$ there is 
$x_1\in(0,c_-)$ such that $\hat f(x_1)=\hat f(1)$.
Then $x_1\in \Int \hat{f}^m(U)$ for some $m$, and since $\orb(\hat f(1))$ does not pass through $c_-,c_+$ we have
$\hat f^i (1)\in \Int \hat{f}^{m+i}(U)$. This shows 
$$
\mathbb{X}\subset\bigcup_{n=0}^\infty\Int\hat{f}^n(U).
$$
So to complete the proof, let us assume that $\hat{f}(1)$ is periodic. Denote $A=\Orb(\hat{f}(1))$ and observe that since $A$ is not passing through $c_-,c_+$, if
$\hat{f}^{-1}(A)\neq A\cup\{1\}$ then $A\subset \bigcup_{n=0}^\infty\Int\hat{f}^n(U)$ and the proof is completed.
But if $\hat{f}^{-1}(A)= A\cup\{1\}$ then we can repeat discussion after \eqref{eq:nkcycle1} obtaining that $\pi(A)$
is a primary $n(k)$-cycle such that $z_{k-1}<f(0)<f(1)=z_k$, contradicting transitivity. 
The proof is completed.
\end{proof}

\begin{rem}
Note that Theorem~\ref{thm:TransDenseOrbits} could be deduced immediately from the statement of Theorem~\ref{cor:srong-trans}. The problem is, that
Theorem~\ref{thm:TransDenseOrbits} is one of main tools used in its proof.
\end{rem}

\begin{thm}\label{thm:28}
	If $f$ is an expanding Lorenz map with renormalization $g$ which additionally is locally eventually onto, then $F_g=[0,1]$.
\end{thm}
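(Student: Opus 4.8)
The plan is to work entirely on the Cantor space $\mathbb{X}$ with the induced map $\hat f$, which is locally eventually onto in the usual sense (for every nonempty open $U\subseteq\mathbb{X}$ there is $N$ with $\hat f^{N}(U)=\mathbb{X}$). Put $\hat J_g=\mathbb{X}\setminus\hat F_g$. Since $\hat F_g=\bigcup_{n\geq 0}\hat f^{-n}((\hat u,\hat v))$ is open, $\hat J_g$ is closed; it is forward invariant ($\hat f(\hat J_g)\subseteq\hat J_g$) and disjoint from $(\hat u,\hat v)\ni c_-,c_+$. By Remark~\ref{rem:FgJg} the assertion $F_g=[0,1]$ is equivalent to $\hat J_g=\emptyset$, so I would assume $\hat J_g\neq\emptyset$ and derive a contradiction.

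First I would rule out the case that $\hat F_g$ is completely invariant. In that case $0,1\in\hat F_g$ (because $c_-,c_+\in(\hat u,\hat v)\subseteq\hat F_g$ and $\hat f(c_+)=0$, while $\hat f(c_-)$ falls into the orbit of $1$), hence $J_g=\pi(\hat J_g)\subseteq(0,c)\cup(c,1)$ is a closed subset of $[0,1]$ which by Remark~\ref{rem:cominv} is completely invariant for $f$; it is proper because $(u,v)\subseteq F_g$, and nonempty by assumption. On the other hand Lemma~\ref{lem:inv}\eqref{lem:inv:2.3} gives that $f$ has no fixed point, and $f$ is transitive because it is locally eventually onto. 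This contradicts Corollary~\ref{cor:TransInv}.

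So $\hat F_g$ is not completely invariant, and by Lemma~\ref{lem:inv}\eqref{lem:inv:1} we may assume, interchanging the two endpoints if necessary, that $\hat u\notin\hat F_g$, i.e.\ $\orb(\hat u)\cap(\hat u,\hat v)=\emptyset$. Since $g$ is an expanding Lorenz self-map of $[u,v]$ and is increasing on $[u,c)$, we have $g(\hat u)\in[u,v]$ and $g(\hat u)<\lim_{x\to c^-}g(x)=v$, so $g(\hat u)=\hat f^{m}(\hat u)\in[\hat u,\hat v)$; but $\hat f^{m}(\hat u)$ lies on $\orb(\hat u)$, which misses $(\hat u,\hat v)$, so $g(\hat u)=\hat u$. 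Hence $\hat u$ is periodic for $\hat f$ (of period dividing $m$), its orbit avoids $(\hat u,\hat v)$ and in particular $c_-,c_+$, so $O:=\orb_f(u)$ is a genuine $f$-periodic orbit contained in $[0,u]\cup[v,1]$ and disjoint from $(u,v)$.

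It remains to contradict this last configuration using that $f$ is locally eventually onto, and this is the step where I expect the genuine work to lie. A first observation is that $O$ cannot be contained in $[0,\hat u]$: then $[0,\hat u]$ would be $\hat f$-forward invariant (as $\hat f$ is increasing on $[0,\hat u]\subseteq[0,c_-)$ and $\hat f(\hat u)\le\hat u$), and an expanding map cannot send a closed interval into itself. Thus $O$ meets both $[0,\hat u]$ and $[\hat v,1]$, and $\hat u$ is the largest point of $O$ below $c$. To finish, the idea is to exploit the renormalization tower together with the identity $\hat f^{m}([\hat u,c_-])=[g(\hat u),g(c_-)]=[\hat u,\hat v]$ (which follows from $g(\hat u)=\hat u$ and the continuity/monotonicity of $\hat f^{i}$ on $[\hat u,c_-]$ for $i\le m$): this yields a point $\hat w\in(\hat u,c_-)$ with $\hat f^{m}(\hat w)=c_+$, whence $\hat f^{m+k}(\hat w)=\hat f^{k}(c_+)=\hat u$, so $\orb(\hat w)$ is carried into $O$. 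Combining this with the consequence of local eventual onto-ness that $\bigcup_{i\ge0}\hat f^{i}((\hat u,\hat v))=\mathbb X$ (so every point of $\hat J_g$ is a forward image of a point of $(\hat u,\hat v)$), with the forward invariance of $\hat J_g$, and — running the argument of the previous paragraph for $\hat v$ as well when $\hat v\notin\hat F_g$ — one is led to an orbit that begins inside $(\hat u,\hat v)$, eventually lands in $\hat J_g$, yet must pass through $(\hat u,\hat v)$ again before reaching $\hat u$ along the tower, contradicting $\orb(\hat u)\cap(\hat u,\hat v)=\emptyset$. The delicate point is to carry out this bookkeeping rigorously, in particular in the degenerate cases $\hat f^{i}(0)=c$ or $\hat f^{j}(1)=c$ where $g$ is defined through the one-sided conventions of Remark~\ref{rem:renorm}; everything preceding it is routine manipulation with Lemma~\ref{lem:inv} and Corollary~\ref{cor:TransInv}.
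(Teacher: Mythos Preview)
Your treatment of the case when $\hat F_g$ is completely invariant is correct and matches the paper's argument (Lemma~\ref{lem:inv}\eqref{lem:inv:2.3} plus Corollary~\ref{cor:TransInv}); the paper does this step last rather than first, but the content is the same.

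The genuine gap is in the non--completely-invariant case, and you essentially acknowledge this yourself. Your concluding paragraph does not close: knowing that $\hat u\in\hat J_g$ is a forward image of some $y\in(\hat u,\hat v)$ says nothing about $\orb(\hat u)$, because $\orb(\hat u)$ is the tail of $\orb(y)$ \emph{after} it reaches $\hat u$, and the earlier visits of $\orb(y)$ to $(\hat u,\hat v)$ are irrelevant to whether $\orb(\hat u)$ meets $(\hat u,\hat v)$. The assertion that the orbit ``must pass through $(\hat u,\hat v)$ again before reaching $\hat u$ along the tower'' is neither proved nor, as far as I can see, true in general; and even if it were, it would still not contradict $\orb(\hat u)\cap(\hat u,\hat v)=\emptyset$. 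The bookkeeping you defer is not a technicality --- it is the whole argument.

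The paper supplies the missing idea, and it is a different one from anything in your sketch: \emph{matching}. Using \cite[Corollary~6.3]{OPR}, the combination ``locally eventually onto plus renormalizable'' forces the existence of $\eta>0$ with $f^{\eta}(c_-)=f^{\eta}(c_+)$. Since $u=f^{k}(c_+)$ and $v=f^{m}(c_-)$, this links the forward orbits of $u$ and $v$: some iterate of $u$ equals some iterate of $v$. With this in hand, one shows that if $\orb(u)\cap(u,v)=\emptyset$ then also $\orb(v)\cap(u,v)=\emptyset$ (the case $\orb(v)\cap(u,v)\neq\emptyset$ is ruled out by a short argument using periodicity of $u$ and the fact that the common tail of $\orb(u)$ and $\orb(v)$ never enters $(u,v)$). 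Then both $u$ and $v$ are periodic and, by matching, lie on the same periodic orbit; taking $l>\eta$ with $f^{l}(u)=u$, $f^{l}(v)=v$ yields $u=f^{l}(u)=f^{l-\eta}(f^{\eta}(u))=f^{l-\eta}(f^{\eta}(v))=v$, a contradiction. So $\hat F_g$ is completely invariant after all, and your first case finishes the proof. The matching step is the substantive input you are missing.
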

\begin{proof}
	First we claim that there is $\eta>0$ such that $f^{\eta}(c_-)=f^{\eta}(c_+)$.
	Since $f$ is locally eventually onto with renormalization, by Corollary~6.3 in \cite{OPR}
	there is an open interval $J$ and $n>0$ such that $J$ is homeomorphic to $(0,c)$ or $(c,1)$ under $f^{n}$ but one of the maps $f^j|_{J}$ is not continuous for some $0<j<n$.
	We may also assume that $j$ is the minimal such number.
	In particular, there is $y\in J$ such that $f^{j-1}(y)=c$. But if we denote $z=f^n(y)$, then since $f^n|_J$ is continuous we have $f^n(y_+)=f^n(y_-)=z$.
	Let $\eta=n-j+1>0$. Then 
	\begin{equation}
	f^\eta(c_+)=f^n(y_+)=z=f^n(y_-)=f^\eta(c_-)\label{eq:matchmain}
	\end{equation}	
	and so the claim holds.
	
	Next assume that renormalization $g = (f^m, f^k)$ is defined on interval $[u,v]$.
	We will prove that $\hat{F}_g$ is completely invariant. Assume on the contrary that it is not the case, which by Lemma~\ref{lem:inv}
	means that $\orb(u)\cap (u,v)=\emptyset$ or $\orb(v)\cap (u,v)= \emptyset$. 
	Assume without loss of generality that the first case holds. 
	Note that $g(c_+)=u$ and $g(c_-)=v$
	and so there are $i,j>0$ such that 
	\begin{equation}
	\label{eq:uvjz}
	f^i(u)=f^j(v_-)=z.
	\end{equation}	
	But $f^s$ is continuous at $u$ for every $s>0$ and  
	$g([u,v])\subset [u,v]$ which implies $u$ is periodic.  
	Assume that $\orb(v)\cap (u,v)\neq \emptyset$ and observe that by \eqref{eq:uvjz} the number
	$r=\max\{t>0 : f^t(v_-)\in (u,v)\}$ satisfies $r\leq j$. 
	Denote $w=f^r(v_-)$ and observe that $w= c$, because otherwise $g((u,c) \cup (c,v))\subset (u,v)$ contradicts the definition of $r$. 
But then $v_-$ is periodic and so is the trajectory of $c_-$ 
which contradicts \eqref{eq:uvjz} because orbit of $z$ never visits $(u,v)$.
	We obtain that $\orb(v)\cap (u,v)=\emptyset$ and therefore both points are periodic. By definition of $\eta$ they belong to the same periodic orbit.
	This is in contradiction with definition of $\eta$, because if we take $l>\eta$ such that $u,v$ are fixed for $f^l$ then
	$$
	u=f^l(u)=f^{l-\eta}(f^{\eta}(u))=f^{l-\eta}(f^{\eta}(v))=f^l(v)=v.
	$$
A contradiction again.
	Indeed $\hat{F}_g$ is completely invariant. 
	
Suppose that $\hat{F}_g\neq\mathbb{X}$, which in particular means that $\hat{J}_g=\mathbb{X}\setminus\hat{F}_g\neq\emptyset$ is completely invariant. By Remarks \ref{rem:cominv} and \ref{rem:FgJg} it follows that $J_g$ is a proper completely invariant closed subset of $[0,1]$.
By Lemma~\ref{lem:inv}(\ref{lem:inv:2.3}) the map $f$ has no fixed points, which is in contradiction with Corollary~\ref{cor:TransInv}. 
Therefore $\hat{F}_g=\mathbb{X}$ and by surjectivity of the projection $\pi$ we obtain
		$$
		[0,1]=\pi(\mathbb{X})=\pi(\pi^{-1}(F_g))=F_g.
		$$
The proof is completed.
\end{proof}

The property that there is some $\eta\in\mathbb{N}$ such that $f^\eta(c_-)=f^\eta(c_+)$ is called \textit{matching} in the literature and it is a subject of current research (see for instance \cite{Bru1}, \cite{Bru2}). Combining the first part of the proof of Theorem~\ref{thm:28} with the above terminology gives the following.

\begin{cor}\label{cor:mathcing}
	If $f$ is an expanding Lorenz map with renormalization $g$ which additionally is locally eventually onto, then $f$ has matching.
\end{cor}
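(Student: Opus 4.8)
The plan is to observe that the conclusion is precisely the content of the first part of the proof of Theorem~\ref{thm:28}, read as a stand-alone assertion. In that first part one shows, under exactly the present hypotheses — $f$ an expanding Lorenz map that is locally eventually onto and admits a renormalization $g$ — that there is an $\eta\in\mathbb{N}$ with $f^{\eta}(c_-)=f^{\eta}(c_+)$; see \eqref{eq:matchmain}. By the definition of matching recalled immediately after Theorem~\ref{thm:28}, this is exactly the statement that $f$ has matching, so nothing more is needed. (As in Theorem~\ref{thm:28}, ``locally eventually onto'' is a hypothesis on $f$; it is not made redundant by the presence of a renormalization, as Example~\ref{51from4} shows.)

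For the record, the mechanism I would quote is the following. Since $f$ is locally eventually onto and renormalizable, Corollary~6.3 of \cite{OPR} produces an open interval $J$ and an integer $n>0$ such that $f^{n}$ maps $J$ homeomorphically onto $(0,c)$ or onto $(c,1)$, while $f^{j}|_{J}$ is discontinuous for some $0<j<n$; choose $j$ minimal. Then $f^{j-1}|_{J}$ is continuous and monotone, so the failure of continuity at the next step can only mean that $f^{j-1}(J)$ contains $c$ in its interior; pick $y\in J$ with $f^{j-1}(y)=c$, so in $\mathbb{X}$ the point $y$ splits with $f^{j-1}(y_-)=c_-$ and $f^{j-1}(y_+)=c_+$. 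Writing $z=f^{n}(y)$ and using continuity of $f^{n}|_{J}$ we get $f^{n}(y_-)=f^{n}(y_+)=z$, whence, with $\eta:=n-j+1\ge 1$,
\begin{align*}
f^{\eta}(c_-)&=f^{n-j+1}(f^{j-1}(y_-))=f^{n}(y_-)=z\\
&=f^{n}(y_+)=f^{n-j+1}(f^{j-1}(y_+))=f^{\eta}(c_+).
\end{align*}

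There is no real obstacle here: all of the weight rests on Corollary~6.3 of \cite{OPR}, which is exactly the step where renormalizability is combined with the locally-eventually-onto property to pin down a ``hidden'' discontinuity lying over the critical point. The only routine points are the passage to the doubling-point space $\mathbb{X}$ — one must check that $f^{j-1}$ really is continuous on $J$, which is why $j$ is taken minimal, so that $y$ splits cleanly into $y_-,y_+$ with images $c_-,c_+$ — and the trivial inequality $\eta=n-j+1\ge 1$; both are immediate once $J$ and $j$ are available.
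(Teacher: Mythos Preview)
Your proposal is correct and mirrors the paper's own treatment exactly: the paper states the corollary as an immediate consequence of the first paragraph of the proof of Theorem~\ref{thm:28}, and you have reproduced that paragraph essentially verbatim, including the appeal to Corollary~6.3 of \cite{OPR}, the choice of minimal $j$, the point $y\in J$ with $f^{j-1}(y)=c$, and the computation leading to \eqref{eq:matchmain}. There is nothing to add.
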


In the \cite{Bru1}, the authors raise the question about parameters $\beta $ for which generalized $\beta$-transformation has matching (this class contain all linear Lorenz maps). They ask whether there exist  generalized $\beta$-transformations with non-Pisot and non-Salem number $\beta$ and matching (see end of p.72 in \cite{Bru1}). Our Example \ref{51from4} answers this question affirmatively, showing that such map exists, since map constructed there has matching by Corollary~\ref{cor:mathcing}.

	\begin{rem}
Let $\hat{f}$ be a map induced on $\mathbb{X}$ by an expanding Lorenz map $f$ with primary $n(k)$-cycle
	$$
	\hat{z}_0<\hat{z}_1<\dots<\hat{z}_{n-k-1}<c_-<c_+<\hat{z}_{n-k}<\dots<\hat{z}_{n-1}.
	$$
It may happen that exactly one of the following conditions holds
		$$
		\hat{z}_{k-1}=\hat{f}(0)\quad\text{or}\quad\hat{z}_{k}=\hat{f}(1)
		$$
		as presented in next example, $D_O=\mathbb{X}$ and the map is still not transitive. In fact there are numerous examples of this kind.
		This shows that implication in Corollary~\ref{cor:transitive} cannot be easily reversed.
	\end{rem}
\begin{exmp}
Consider an expanding Lorenz map $f\colon [0,1]\to[0,1]$ defined by $f(x)=\beta x+\alpha (\text{mod }1),$ 
where
	$$
	\beta:=\sqrt[3]{2},\quad\alpha:=\frac{1}{\beta+\beta^2}=\frac{1}{\sqrt[3]{2}+\sqrt[3]{4}}\approx0.351207.
	$$
Let $\hat{f}$ denote the map induced by $f$ on $\mathbb{X}$. Denote 
$$
z_0:=f(0)=\frac{1}{\sqrt[3]{2}+\sqrt[3]{4}}\quad\text{ and }\quad z_1:=f^2(0)=\frac{1}{\sqrt[3]{2}}.
$$
Then $O=\{z_0,z_1\}$ forms a primary $2(1)$-cycle for $f$. Observe that $f(1)<z_1$, so by \cite[Theorem 6.5]{OPR}
the map $\hat{f}$ is not transitive. We may also view $z_0,z_1\in \mathbb{X}$ and denote $\hat{O}=\{z_0,z_1\}$.
Note that $c_+\in \hat{f}^{-2}(z_0)$ and therefore by Remark~\ref{rem:density} we see that
$$
D_O=\overline{\bigcup_{i=0}^\infty \hat{f}^{-i}(\hat{O})}=\mathbb{X}.
$$
\end{exmp}

\section{Results related to Theorem~\ref{thm:CuiMain}}\label{sec:periodic}
Recall that the rotation number of $x\in\mathbb{X}$ under $\hat{f}$ is defined as
$$
\rho(x):=\lim\limits_{n\to\infty}\frac{m_n(x)}{n},
$$
provided that this limit exists, where
$$
m_n(x):=\left| \{i\in\{0,1,\dots,n-1\}:\hat{f}^i(x)\geq c_+\}\right|.
$$
The set
$$
P(\hat{f}):=\{\rho\in\mathbb{R}:\rho(x)=\rho\text{ for some }x\in\mathbb{X}\}
$$
is called the rotation interval of $\hat{f}$.

\begin{thm}
	\label{thm:RotInt}
	Let $f$ be an expanding Lorenz map with a primary $n(k)$-cycle
	$$
	z_0<z_1<\dots<z_{n-k-1}<c<z_{n-k}<\dots<z_{n-1}.
	$$
	Then the rotation interval degenerates to singleton $\{\frac{k}{n}\}$. Moreover, $f$ is periodic renormalizable if and only if $z_{k-1}\neq f(0)$ and $z_k\neq f(1)$.
\end{thm}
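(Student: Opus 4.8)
The plan is to prove the two assertions separately: the degeneracy of the rotation interval is essentially forced by the combinatorics of a primary cycle, while the equivalence rests on a count of $\hat f$-preimages of that cycle.

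For the first assertion I would use that the primary condition $z_{k-1}\le f(0)<f(1)\le z_k$ makes $[\hat z_0,\hat z_{n-1}]$ ``recurrently absorbing'': after at most one step the orbit of any point enters it, and on $[\hat z_0,\hat z_{n-1}]$ the map $\hat f$ permutes the intervals $[\hat z_i,\hat z_{i+1}]$ following the cyclic shift $i\mapsto i+k\pmod n$, the only branching being at $[\hat z_{n-k-1},\hat z_{n-k}]\ni c$ (whose two halves are sent to $[\hat z_{n-1},1]$ and $[0,\hat z_0]$, which are fed back into $[\hat z_{k-1},\hat z_k]$ in one further step). A direct bookkeeping then shows that along any orbit, once it has entered $[\hat z_0,\hat z_{n-1}]$, every window of $n$ consecutive iterates contains exactly $k$ indices $i$ with $\hat f^{i}(x)\ge c_+$, whence $\rho(x)=k/n$ for every $x$ and $P(\hat f)=\{k/n\}$. (Alternatively this may be quoted from the study of $n(k)$-cycles in \cite{Glen,OPR}, or obtained as on p.~249 of \cite{Cui} using that the renormalization $g=(f^n,f^n)$ of Theorem~\ref{thm:cycle} maps its renormalization interval $[u,v]=[f^{n-1}(0),f^{n-1}(1)]$ into itself.) I expect this counting — together with the handling of the degenerate cases $k=1$ and $n=2$ and of orbits that temporarily leave $[\hat z_0,\hat z_{n-1}]$ — to be the most technical part of the proof.

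For the equivalence, note first that any periodic orbit of period $p$ has rotation number $j/p$ for some integer $j$; by the first assertion and $\gcd(k,n)=1$ this forces $n\mid p$, so $\hat f$ has minimal period exactly $n$ and, by Theorem~\ref{thm:Bimprov}\eqref{thm:Bimprov:2}, the minimal cycle of $f$ is the given primary cycle $O=\{z_0,\dots,z_{n-1}\}$ (note $1<n<\infty$). Assume $z_{k-1}\ne f(0)$ and $z_k\ne f(1)$. Since $\hat f$ has increasing branches $\hat f|_{[0,c_-]}\colon[0,c_-]\to[\hat f(0),1]$ and $\hat f|_{[c_+,1]}\colon[c_+,1]\to[0,\hat f(1)]$, a point has two $\hat f$-preimages exactly when it lies in $[\hat f(0),\hat f(1)]\subset[\hat z_{k-1},\hat z_k]$; the only cycle points in $[\hat z_{k-1},\hat z_k]$ are $\hat z_{k-1}$ and $\hat z_k$, and the strict inequalities keep both outside $[\hat f(0),\hat f(1)]$. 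Hence each cycle point has a unique $\hat f$-preimage, its cyclic predecessor, so $\hat f^{-1}(\hat O)=\hat O$ and $D:=\overline{\bigcup_{j\ge0}\hat f^{-j}(\hat O)}=\hat O=O$. Moreover $\sup\{x\in O:x<c\}=z_{n-k-1}$ and $\inf\{x\in O:x>c\}=z_{n-k}$, so by Lemma~\ref{lem:32Dingimproved} (applicable since $1<n<\infty$, $\hat z_{n-k-1}\ne c_-$ and $\hat z_{n-k}\ne c_+$) the exponents defining $R_D f$ are $N((z_{n-k-1},c))=N((c,z_{n-k}))=n$; thus $R_D f=(f^n,f^n)$ on $[f^n(c_+),f^n(c_-)]=[f^{n-1}(0),f^{n-1}(1)]$, which by Theorem~\ref{thm:cycle}\eqref{tw-n(k)-cykle:1} is a renormalization of $f$. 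Therefore $D=O$ and $R_D f$ is a well-defined renormalization, i.e.\ $f$ is periodically renormalizable.

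Conversely, suppose $z_{k-1}=f(0)$ (the case $z_k=f(1)$ is symmetric). Then $\hat f(0)=\hat z_{k-1}$, so $0$ is the left-branch $\hat f$-preimage of $\hat z_{k-1}$, giving $0\in\hat f^{-1}(\hat O)\subset D$; and $f(0)=z_{k-1}\ne z_k$ excludes $z_0=0$, so $0\notin O$ and $D\ne O$ (in fact $f$ is transitive in this border case, cf.\ \cite{OPR}, so $D=\mathbb X$ by Corollary~\ref{cor:transitive}). As the minimal cycle is still $O$ by the rotation-number argument, the set associated to it differs from $O$, so $f$ is not periodically renormalizable (cf.\ Example~\ref{ex:OandD}). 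The main obstacle throughout is the combinatorial verification behind the first assertion; once the rotation interval is known, the rest is bookkeeping resting on Theorems~\ref{thm:cycle} and~\ref{thm:Bimprov} and Lemma~\ref{lem:32Dingimproved}.
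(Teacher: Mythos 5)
Your proof follows essentially the same structure as the paper's: a combinatorial partition argument for the degeneracy of the rotation interval, followed by a preimage count of the primary cycle to decide periodic renormalizability. The paper's proof of the first assertion is more concrete — it sets up the intervals $I_{-1},\dots,I_n$ partitioning $\mathbb X\setminus\hat O$, observes that for $x\in I_{n-k-1}\cup I_{n-k}$ one has $\hat f^n(x)\in I_{n-k-1}\cup I_{n-k}$ with exactly $k$ of the $n$ intermediate iterates landing $\ge c_+$, and then takes the limit after discarding a finite prefix; your sketch carries the same idea but you only outline it and flag it as the technical heart. Two genuine improvements in your argument: (i) your derivation that $n$ is the minimal period from $\rho\equiv k/n$ and $\gcd(k,n)=1$ is cleaner and more explicit than the paper's bare assertion ``by the above partition, $n$ is the smallest period''; (ii) your characterization ``a point has two $\hat f$-preimages iff it lies in $[\hat f(0),\hat f(1)]$'' and the observation that the strict inequalities keep $\hat z_{k-1},\hat z_k$ out of this interval is a neat reformulation of the paper's check that $\hat f^{-1}(\hat O)\cap\bigcup I_i=\emptyset$. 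You also front-load some material from Corollary~\ref{cor:periodic} (invoking Lemma~\ref{lem:32Dingimproved} to pin down the exponents of $R_Df$), which the paper postpones.

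One caution: your parenthetical ``in fact $f$ is transitive in this border case, so $D=\mathbb X$ by Corollary~\ref{cor:transitive}'' is not correct in general. The paper's example at the end of Section~4 (with $\beta=\sqrt[3]{2}$, $\alpha=1/(\beta+\beta^2)$) has $z_{k-1}=f(0)$, hence is in the border case, yet is \emph{not} transitive even though $D_O=\mathbb X$. The equality $D=\mathbb X$ does hold in the border case, but the correct route is that $z_{k-1}=f(0)$ forces $c_+\in\hat f^{-2}(\hat O)$, so $\bigcup_n\hat f^{-n}(\hat O)\supset\bigcup_n\hat f^{-n}(c_+)$, which is dense by Remark~\ref{rem:density} — no transitivity required. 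Fortunately your main argument relies only on $D\ne O$, which you establish directly, so this misstatement is not load-bearing, but it should be removed or corrected.
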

\begin{proof}
The proof that the rotation number is unique follows similar argument as in \cite[Proposition~3]{Cui} for $2$-periodic orbit.
Indeed, note first that by definition of $n(k)$-cycle we have $|\pi^{-1}(z_i)|=|\{\hat{z}_i\}|=1$ for $i=0,1,\dots,n-1$. Furthermore $\hat{z}_0>0$ and $\hat{z}_{n-1}<1$ because, if $\hat{z}_0=0$ or $\hat{z}_{n-1}=1$ then $\hat{f}(0)=\hat{z}_k\geq \hat{f}(1)$ or $\hat{f}(1)=\hat{z}_{k-1}\leq \hat{f}(0)$ which contradicts the fact that $f$ is expanding. 
	Hence, if we denote $\hat{O}:=\{\hat{z}_i:i=0,1,\dots,n-1\}$  then there is a partition of $\mathbb{X}\setminus \hat{O}$ consisting of $n+2$ intervals
	\begin{align*}
		I_{-1}&:=[0,\hat{z}_0),\hspace{0.2cm}I_0:=(\hat{z}_0,\hat{z}_1),\hspace{0.2cm}\ldots,\hspace{0.2cm}I_{n-k-1}:=(\hat{z}_{n-k-1},c_-],\\
		I_{n-k}&:=[c_+,\hat{z}_{n-k}),\hspace{0.2cm}\ldots,\hspace{0.2cm}I_{n-1}:=(\hat{z}_{n-2},\hat{z}_{n-1}),\hspace{0.2cm}I_n:=(\hat{z}_{n-1},1].
	\end{align*}
	For $z\in\mathbb{X}$ and $j\in\mathbb{N}_0$ we will denote by $Orb_j(z)$ the first $j$ elements of the orbit of point $z$ under $\hat{f}$. Clearly, the rotation number of any point from the $n(k)$-cycle $\hat{O}$ is equal to $\frac{k}{n}$ and the same is true for any point that is eventually mapped to a point in $\hat{O}$. Let $x\in\mathbb{X}$ and assume that the orbit of $x$ is disjoint with the $n(k)$-cycle, i.e.  $\orb(x)\cap \hat{O}=\emptyset$. Then, if $x\in I_{n-k-1}$ we have 
	$$
	Orb_n(x)\cap I_i\neq\emptyset\quad\text{for}\quad i=0,1,\ldots,n-k-1,n-k+1,\ldots,n-1,n,
	$$
	Similarly, if $x\in I_{n-k}$ we have 
	$$
	Orb_n(x)\cap I_i\neq\emptyset\quad\text{for}\quad i=-1,0,1,\ldots,n-k-2,n-k,\ldots,n-1.
	$$
	In both cases $\hat{f}^n(x)\in I_{n-k-1}\cup I_{n-k}$. It follows that $m_{l\cdot n}(x)=l\cdot k$ for any $l\in\mathbb{N}$. Observe that if $x$ belongs to any interval $I_i$ then there is $j\in\mathbb{N}_0$ such that $Orb_j(x)\cap I_{n-k-1}\neq\emptyset$ or $Orb_j(x)\cap I_{n-k}\neq\emptyset$. Assume that $j$ is the smallest such number for $x$. Then
	\begin{align*}
		\rho(x)&=\lim\limits_{i\to\infty}\frac{m_i(x)}{i}=\lim\limits_{i\to\infty}\frac{m_{j}(x)+m_{i-j}(\hat{f}^j(x))}{j+(i-j)}=\lim\limits_{i\to\infty}\frac{m_{i-j}(\hat{f}^j(x))}{i-j}\\
		&=\lim\limits_{p\to\infty}\frac{m_{p}(\hat{f}^j(x))}{p}=\frac{k}{n}.
	\end{align*}

	Next, note that by the above partition, $n$ is the smallest period of the periodic points of $\hat{f}$ and by Theorem~\ref{thm:Bimprov} the $n(k)$-cycle $\hat{O}$ is the unique $n$-periodic orbit. Consider the set 
	$$
	\hat{D}:=\overline{\bigcup_{i=0}^\infty \hat{f}^{-i}(\hat{O})}.
	$$
If $z_{k-1}\neq f(0)$ and $z_{k}\neq f(1)$ then
	$$
	\hat{f}^{-1}(\hat{O})\cap\bigcup_{i=-1}^{n}I_i=\emptyset
	$$
	and so $\hat{f}^{-1}(\hat{O})\subset \hat{O}$ which gives $\hat{D}=\hat{O}$. Clearly $\hat{f}(\hat{O})=\hat{O}$, so the set $\hat{O}$ is completely invariant. Next, observe that $\pi^{-1}(O)=\hat{O}$ and
	$$
	D=\overline{\bigcup_{i=0}^\infty f^{-i}(O)}=O,
	$$
	where $O=\{z_i:i=0,1,\dots,n-1\}$. So by Remark~\ref{rem:cominv} and Theorem~\ref{thm:A}(1) the renormalization $R_Df$ is well defined and periodic.
	
	On the other hand, if $z_{k-1}= f(0)$ or $z_{k}=f(1)$, then $0\in D$ or $1\in D$. Hence $D\neq O$ because $z_0>0$ and $z_{n-1}<1$. 	The proof is complete.	
\end{proof}

\begin{cor}\label{cor:periodic}
	Let $f$ be an expanding Lorenz map with a primary $n(k)$-cycle
	$$
	z_0<z_1<\dots<z_{n-k-1}<c<z_{n-k}<\dots<z_{n-1}
	$$
	such that $z_{k-1}\neq f(0)$ and $z_k\neq f(1)$. Then the renormalization $R_Df$ of $f$ associated to minimal cycle $O$ of $f$ is well defined, periodic and equal to $g=(f^n,f^n)$.
\end{cor}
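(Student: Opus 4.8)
The plan is to read off the statement from the two theorems already established in this section, namely Theorem~\ref{thm:RotInt} and Theorem~\ref{thm:cycle}, and then to check that the renormalizations they produce coincide. First, since $z_{k-1}\neq f(0)$ and $z_k\neq f(1)$, Theorem~\ref{thm:RotInt} asserts that $f$ is periodic renormalizable; this is precisely the statement that the renormalization $R_Df$ associated to the minimal cycle $O$ is well defined and periodic. Inspecting its proof, one extracts what we need: the $n(k)$-cycle $O=\{z_0,\dots,z_{n-1}\}$ is (by the partition of $\mathbb{X}\setminus\hat O$ into the intervals $I_{-1},\dots,I_n$) the unique orbit of the smallest period $n$, so it is the minimal cycle, and when $z_{k-1}\neq f(0)$, $z_k\neq f(1)$ one has $\hat f^{-1}(\hat O)\subset\hat O$, whence $D=\overline{\bigcup_{i\ge0}f^{-i}(O)}=O$. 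Since $z_0>0$ and $z_{n-1}<1$ (otherwise $f$ would fail the expanding condition, as in the proof of Theorem~\ref{thm:RotInt}), we have $O\subset(0,c)\cup(c,1)$, so by Remark~\ref{rem:cominv} the set $\hat O=\pi^{-1}(O)$ is completely invariant and $O$ is a proper completely invariant closed subset of $[0,1]$; thus $R_Df=R_Of$ is exactly the map produced by Theorem~\ref{thm:A}(1) applied to $E=O$ (cf. Remark~\ref{rem:thm:A}).

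Second, Theorem~\ref{thm:cycle}\eqref{tw-n(k)-cykle:1} provides the renormalization $g=(f^n,f^n)$ on the interval $[u,v]=[f^{n-1}(0),f^{n-1}(1)]=[f^n(c_+),f^n(c_-)]$, and — because $z_{k-1}\neq f(0)$ and $z_k\neq f(1)$ — Theorem~\ref{thm:cycle}\eqref{tw-n(k)-cykle:3} applies, giving that $J_g$ is a proper completely invariant closed set with
$$
z_{n-k-1}=\sup\{x\in J_g:x<c\},\qquad z_{n-k}=\inf\{x\in J_g:x>c\},
$$
and, by part \eqref{tw-n(k)-cykle:3.3}, that $R_{J_g}f=g$.

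It remains to note that the assignment $E\mapsto R_Ef$ of Theorem~\ref{thm:A}(1) depends on the set $E$ only through the two numbers $e_-=\sup\{x\in E:x<c\}$ and $e_+=\inf\{x\in E:x>c\}$, since $l=N((e_-,c))$, $r=N((c,e_+))$ and the formula \eqref{eq:RE} are then completely determined. For $E=O$ these numbers are $e_-=z_{n-k-1}$ and $e_+=z_{n-k}$, which are precisely the values attached above to $E=J_g$. Hence $R_Df=R_Of=R_{J_g}f=g=(f^n,f^n)$; the corresponding $l=N((z_{n-k-1},c))=n$ and $r=N((c,z_{n-k}))=n$ (as computed in the proof of Theorem~\ref{thm:cycle}\eqref{tw-n(k)-cykle:3.2}) make the renormalization interval $[f^n(c_+),f^n(c_-)]=[f^{n-1}(0),f^{n-1}(1)]=[u,v]$, in agreement with the statement. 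The only points genuinely requiring care are the verification that $O$ is a proper completely invariant closed set so that Theorem~\ref{thm:A}(1) legitimately applies to it, and the identity $D=O$; both, however, are already contained in the proof of Theorem~\ref{thm:RotInt}, so the argument is mainly a matter of assembling the pieces correctly.
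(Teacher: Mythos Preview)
Your proof is correct and uses the same core ingredients as the paper: Theorem~\ref{thm:RotInt} for the identity $D=O$ (and hence that $R_Df$ is well defined and periodic), and Theorem~\ref{thm:cycle}\eqref{tw-n(k)-cykle:3} for the identification with $g=(f^n,f^n)$.

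The one genuine difference is in how you bridge $R_Of$ and $g$. The paper proves the stronger set equality $J_g=O$: since $O\subset J_g$ by \eqref{tw-n(k)-cykle:3.2}, any point $x\in J_g\setminus O$ would have an iterate in $(z_{n-k-1},z_{n-k})$, contradicting complete invariance of $J_g$ together with \eqref{tw-n(k)-cykle:3.2}. You instead bypass this by the clean observation that the assignment $E\mapsto R_Ef$ of Theorem~\ref{thm:A}\eqref{thm:A:1} depends only on $e_-$ and $e_+$, and that $O$ and $J_g$ share the same values $z_{n-k-1},z_{n-k}$ for these, so $R_Of=R_{J_g}f=g$ without ever needing $J_g=O$. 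Your route is slightly more economical; the paper's route yields the extra structural fact $J_g=O$ as a byproduct.
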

\begin{proof}
By Theorem~\ref{thm:RotInt} the renormalization $R_Df$ is well defined and periodic, i.e.
$$
D=\overline{\bigcup_{i=0}^\infty f^{-i}(O)}=O,
$$
where $O=\{z_i:i=0,1,\dots,n-1\}$. It remains to show that $R_D f=(f^n,f^n)$. By Theorem~\ref{thm:cycle}\eqref{tw-n(k)-cykle:3.3}, it is enough to show that $J_g=O$, and by \eqref{tw-n(k)-cykle:3.2} in this theorem we have $O\subset J_g$.
Now, let us assume that there is a point $x\in J_g\setminus O$. Then, for some iterate we must have $f^j(x)\in(z_{n-k-1},z_{n-k})$. Since the set $J_g$ is completely invariant, $f^j(x)\in J_g$ and we get the contradiction with Theorem~\ref{thm:cycle}\eqref{tw-n(k)-cykle:3.2}. Indeed $J_g=O$.
\end{proof}

\section{Basic renormalizations}\label{sec:basic_renormalizations}

Analyzing the examples we have presented previously, one can notice that Theorem~\ref{thm:A}\eqref{thm:A:2} fails in the case of renormalizations $g=(f^l,f^r)$ with different numbers $l$ and $r$. In our examples, these are exactly the renormalizations that can be presented as a composition of renormalizations $f_i=(f_{i-1},f_{i-1}^2)$, $f_i=(f_{i-1}^2,f_{i-1})$ or $f_i=(f_{i-1}^2,f_{i-1}^2)$, where the last component $f_n=g$ is a trivial or special trivial renormalization of $f_{n-1}$. 
	
The above observations motivate the next theorem, which provides an algorithm to detect the renormalizations that can be successfully recovered from the corresponding sets $J_g$ by the procedure presented in Theorem~\ref{thm:A}. Let us note that the proper subsets $J_g\subset[0,1]$ obtained by applying the algorithm are not necessarily completely invariant.

\begin{thm}\label{thm:renorm_comp}
	Let $f\colon[0,1]\to[0,1]$ be an expanding Lorenz map and $n\in\N$. Assume that there is a sequence of Lorenz maps $\{f_i\}_{i=1}^n$ such that for every $i=1,2,\ldots,n$ the map $f_i\colon[u_i,v_i]\to[u_i,v_i]$ has one of the following forms:
	\begin{equation}\label{eq:basic_renorm}
	f_i=(f_{i-1},f_{i-1}^2),\quad f_i=(f_{i-1}^2,f_{i-1})\quad\text{or}\quad f_i=(f_{i-1}^2,f_{i-1}^2), 
	\end{equation}
	where $f_0:=f$. Then for every $i=1,2,\ldots,n$ there exist numbers $l_i$, $r_i\in\N$ such that $f_i=(f_0^{l_i},f_0^{r_i})$. Moreover, the following trichotomy holds.
	\begin{enumerate}
		\item\label{thm:renorm_comp:1} The map $f_n$ is the composition of $n$ trivial renormalizations, i.e. $f_i\neq(f_{i-1}^2,f_{i-1}^2)$ for every $i=1,2,\ldots,n$. Then $J_{f_n}\subset\{0,1\}$ and $l_n\neq r_n$.
		\item\label{thm:renorm_comp:2} We have $f_n=(f_{n-1}^2,f_{n-1}^2)$. Then the set $J_{f_n}$ contains a periodic orbit, $R_{J_{f_n}}f=f_n$ and $l_n=r_n$.
		\item\label{thm:renorm_comp:3} We have $f_n\neq(f_{n-1}^2,f_{n-1}^2)$ and $f_i=(f_{i-1}^2,f_{i-1}^2)$ for some $i<n$. Then the set $J_{f_n}$ contains a periodic orbit, but $R_{J_{f_n}}f\neq f_n$ and $l_n\neq r_n$. In fact $R_{J_{f_n}}f= f_j$, where $0<j<n$ is the largest index for which the map $f_j$ is neither trivial nor special trivial renormalization of $f_{j-1}$.
	\end{enumerate}
	In the above conditions we do not assume that maps $f_i$ are proper renormalizations, i.e. it may happen that $[u_i,v_i]=[u_{i-1},v_{i-1}]$ and $f_{i}$ is special trivial renormalization.
\end{thm}
\begin{proof}
	At first observe that by the assumption we have $f_1=(f_0^{l_1},f_0^{r_1})$, where
	$$
	(l_1,r_1)=(1,2)\quad\text{or}\quad(l_1,r_1)=(2,1)\quad\text{or}\quad(l_1,r_1)=(2,2).
	$$
	Assume that for some $k\in\{1,2\ldots,n-1\}$ there are numbers $l_k$, $r_k\in\N$ such that $f_k=(f_0^{l_k},f_0^{r_k})$. Let us consider the following three cases.
	\begin{itemize}
		\item [a)] If $f_{k+1}=(f_{k},f_{k}^2)$, then $f_{k+1}=(f_0^{l_k},f_0^{r_k+l_k})$ and we put $(l_{k+1},r_{k+1}):=(l_k,r_k+l_k)$.
		\item [b)] If $f_{k+1}=(f_{k}^2,f_{k})$, then $f_{k+1}=(f_0^{l_k+r_k},f_0^{r_k})$ and we put $(l_{k+1},r_{k+1}):=(l_k+r_k,r_k)$.
		\item [c)] If $f_{k+1}=(f_{k}^2,f_{k}^2)$, then $f_{k+1}=(f_0^{l_k+r_k},f_0^{r_k+l_k})$ and we put $(l_{k+1},r_{k+1}):=(l_k+r_k,l_k+r_k)$.
	\end{itemize}
The induction concludes the first part of the theorem. It remains to show that the trichotomy \eqref{thm:renorm_comp:1}--\eqref{thm:renorm_comp:3} holds. It is clear that
	these conditions cover all possible options, so we only need to verify statements about sets $J_{f_n}$.

	\textbf{(\ref{thm:renorm_comp:1}):} Since all renormalizations are trivial, for each $i=1,2,\ldots,n$ the map $f_i$ is defined on the interval $[u_i,v_i]=[f_{i-1}(u_{i-1}),v_{i-1}]$ or $[u_i,v_i]=[u_{i-1},f_{i-1}(v_{i-1})]$, where $[u_0,v_0]:=[0,1]$. In particular it implies, that the following inclusions hold:
	$$
	[0,1]=[u_0,v_0]\supset[u_1,v_1]\supset\ldots\supset[u_{n-1},v_{n-1}]\supset[u_n,v_n].
	$$
		
	Let us fix any number $k\in\{1,2,\ldots,n\}$. Note that for every point $x\in(u_{k-1},v_{k-1})$ we have $\Orb_{f_{k-1}}(x)\cap(u_k,v_k)\neq\emptyset$ and for $x\in\{u_{k-1},v_{k-1}\}$ we get the following equivalence
	$$
	\Orb_{f_{k-1}}(x)\cap(u_k,v_k)=\emptyset\iff f_{k-1}(x)=x.
	$$
	Suppose that $f_{k-1}(v_{k-1})=v_{k-1}$. We will show that $c\in\Orb_{f_0}(v_{k-1})$ or $v_{k-1}=1$. If $f_0(v_{k-1})=v_{k-1}$, then by the topological expanding condition for $f_0$ we get $v_{k-1}=1$. So assume that $f_0(v_{k-1})\neq v_{k-1}$ and denote
	$$
	i_0:=\max\{i\in\{0,1,\ldots,k-2\}: f_i(v_{k-1})\neq v_{k-1}\}.
	$$
	Since $f_{k-1}(v_{k-1})=v_{k-1}$, we obtain that $f_{i_0+1}(v_{k-1})=v_{k-1}$.
	 Since for each number $i=1,\ldots,k$ we have $f_i=(f_{i-1},f_{i-1}^2)$ or $f_i=(f_{i-1}^2,f_{i-1})$,
	 we must have 	$f_{i_0+1}=(f_{i_0},f_{i_0}^2)$ and therefore the point $v_{k-1}$ is $2$-periodic for $f_{i_0}$. Next observe that $v_{k-1}\notin(c,v_{i_0})$, since otherwise the map $f_{i_0+1}$ would have a fixed point inside the interval $(u_{i_0+1},v_{i_0+1})$, which is impossible due to the topological expanding condition for $f_{i_0+1}$. So $v_{k-1}=v_{i_0}$ and as a result we obtain 
	$$
	\{v_{k-1},c\}=\Orb_{f_{i_0}}(v_{k-1})\subset\Orb_{f_0}(v_{k-1}).
	$$
	Similarly we prove that $f_{k-1}(u_{k-1})=u_{k-1}$ implies $c\in\Orb_{f_0}(u_{k-1})$ or $u_{k-1}=0$.

	Recall that for every number $k=1,2,\ldots,n$ and point $x\in[u_k,v_k]$ we have inclusion $\Orb_{f_{k}}(x)\subset\Orb_{f_0}(x)$. Therefore, from the above considerations it follows that the orbit $\Orb_{f_0}(x)$ of any point $x\in(0,1)$ intersects the interval $(u_n,v_n)$. Indeed, $J_{f_n}\subset\{0,1\}$.
	
\textbf{(\ref{thm:renorm_comp:2}):} In this case the map $f_n=(f_{n-1}^2,f_{n-1}^2)$ is defined on the renormalization interval $[u_n,v_n]=[f_{n-1}(u_{n-1}),f_{n-1}(v_{n-1})]\subset[u_{n-1},v_{n-1}]$ and by definition for each number $i\in\N$ we have
	\begin{equation}\label{eq:(2,2)_renorm}
		f_{n-1}^{2i}\left([u_n,v_n]\right)\subset[u_n,v_n].
	\end{equation}	
	We will show that $f_{n-1}$ has a $2$-periodic orbit $O=\{P_L,P_R\}\subset [u_{n-1},v_{n-1}]$ such that
	$$
	P_L\leq u_n<c<v_n\leq P_R.
	$$
	It is clear that neither $u_n=0$ nor $v_n=1$, since $f_n=(f_{n-1}^2,f_{n-1}^2)$.		
	At first observe that there is a unique point $q\in[u_{n-1},c)\cap f_{n-1}^{-1}(v_n)$ and a unique $p\in(c,v_{n-1}]\cap f_{n-1}^{-1}(u_n)$. Suppose that $q>u_n$. Then
	$$
	f_{n-1}((u_n,q))=(f_{n-1}(u_n),v_n)\subset(c,v_n).
	$$
	So the condition~\eqref{eq:(2,2)_renorm} implies that for each $i\in\N$ we have
	$$
	f^i_{n-1}((u_n,q))\subset[u_n,v_n]
	$$
	and consequently $c\notin f^i_{n-1}((u_n,q))$, because $[u_n,v_n]
		\subsetneq [0,1]$. This contradicts the topological expanding condition for $f_{n-1}$ and  therefore $q\leq u_n$. Similarly we prove that $p\geq v_n$.

	Note that if $q=u_n$, then $f_{n-1}^2([u_n,c))=f_{n-1}([v_n,1))\subset[u_n,v_n]$ and so the condition~\eqref{eq:(2,2)_renorm} together with the inequality $p\geq v_n$ lead to $p=v_n$, and hence $O=\{P_L,P_R\}=\{u_n,v_n\}$. Next, let us consider the case $q<u_n$. Note that
	$$
	f_{n-1}\left([q,u_n]\right)=\left[v_n,f_{n-1}(u_n)\right]\subset(c,v_{n-1}]
	$$
	and that $f_{n-1}^2(v_n)\leq v_n=f_{n-1}(q)$ which gives
	$$
	\quad f_{n-1}(v_n)\leq q<u_n\leq f_{n-1}^{2}(u_n).
	$$
	This implies that
	$$
	[q,u_n]\subset f_{n-1}^2([q,u_n]),
	$$
	so there is a $2$-periodic orbit $O=\{P_L, P_R\}\subset[q,u_n]\cup[v_n,p]$ for the map $f_{n-1}$.

	From the first part of the proof it follows that
	$$
	f_{n-1}=\left(f_0^{l_{n-1}},f_0^{r_{n-1}}\right)\quad\text{and}\quad f_{n}=\left(f_{n-1}^2,f_{n-1}^2\right)=\left(f_0^{l_{n-1}+r_{n-1}},f_0^{l_{n-1}+r_{n-1}}\right)
	$$
	for some numbers $l_{n-1}$, $r_{n-1}\in\N$. Let us denote $l:=l_{n-1}+r_{n-1}$. Then
	$$
	P_L=f_{n-1}^2(P_L)=f_0^l(P_L)=f_n(P_L).
	$$
	Moreover $\Orb_{f_0}(P_L)\cap(u_n,v_n)=\emptyset$, because otherwise the map $f_n$ has a fixed point in $(u_n,v_n)$, which is impossible due to the topological expanding condition for $f_n$. So we have shown that the inclusion $\Orb_{f_0}(P_L)\subset J_{f_n}$ holds.
	
	It remains to show that $R_{J_{f_n}}f=f_n$, i.e.
	$$
	N\left( (e_-,c)\right)=N\left( (c,e_+)\right)=l=l_{n-1}+r_{n-1},
	$$
	where
	$$
	e_-:=\sup\{x\in J_{f_n}, x<c\}\quad\text{and}\quad e_+:=\inf\{x\in J_{f_n}, x>c\}
	$$
	(cf. Theorem~\ref{thm:A}). Firstly let us suppose that $P_L<e_-$. Then we get
	$$
	f_{n-1}\left( (P_L,e_-]\right)=\left(P_R,f_{n-1}(e_-)\right]\subset\left(P_R,v_{n-1}\right)
	$$
	and
	$$
	f_{n-1}\left( \left(P_R,f_{n-1}(e_-)\right]\right)=\left(P_L,f_{n-1}^2(e_-)\right]\subset\left(P_L,v_{n}\right).
	$$
	Therefore, by the definition of point $e_-$ we obtain
	$$
	f_{n-1}^2\left( (P_L,e_-]\right)\subset(P_L,e_-],
	$$
	which contradicts the topological expanding condition for $f_{n-1}$. So $P_L=e_-$ and similarly we prove that $P_R=e_+$. 
	As a consequence we get $J_{f_n}\cap(P_L,P_R)=\emptyset$, in particular
	\begin{equation}\label{eq:orbP_L}
	\Orb_{f_0}(P_L)\cap(P_L,P_R)=\emptyset.
	\end{equation}

	Let us denote $m:=|\Orb_{f_0}(P_L)|$, so the point $P_L$ is $m$-periodic for the map $f_0$. Observe that $N((P_L,c))\leq m$, since otherwise
	$$
	f_0^m\left((P_L,c)\right)=\left(P_L,f_0^{m-1}(1)\right)
	\subset (P_L,c)
	$$
	which is impossible due to the topological expanding condition for $f_0$. 

	Now suppose that $k:=N((P_L,c))<m$. From the condition~\eqref{eq:orbP_L} it follows that
	$$
	f_0^k\left((P_L,c)\right) = \left(f_0^k(P_L),f_0^{k-1}(1)\right)\supset[P_L,c].
	$$
	So there is a point $x\in(P_L,c)$ such that $f_0^k(x)=P_L$ and $c\notin\Orb_{f_0}(x)$. The equality $P_L=e_-$ implies that $f_0^i(x)\in(u_n,v_n)$ for some number $i\in\N_0$. But then for every number $j\in\N_0$ we have $f_n^j(f_0^i(x))\in(u_n,v_n)$, which contradicts the condition~\eqref{eq:orbP_L}. Therefore $N((P_L,c))=m$. Similarly we get the equality $N((c,P_R))=m$.

	Since $f_0^l(P_L)=P_L$ and $m$ is a period of $P_L$ we know that $m$ divides $l$. We claim that in fact $m=l$. For this purpose, suppose that $m<l=l_{n-1}+r_{n-1}$ and recall that
	$$
	f_{0}^{l_{n-1}}(P_L)=f_{n-1}(P_L)=P_R,\quad f_{0}^{r_{n-1}}(P_R)=f_{n-1}(P_R)=P_L.
	$$
	Let us denote
	$$
	i_0=\min\{i\in\N:f_0^i(P_L)=P_R\}\quad\text{and}\quad j_0=\min\{j\in\N:f_0^j(P_R)=P_L\}.
	$$
	So $m=i_0+j_0$. Furthermore, there are numbers $a$, $b\in\N_0$ such that $l_{n-1}=a\cdot m+i_0$ and $r_{n-1}=b\cdot m+j_0$. Observe that at least one of the numbers $a$ and $b$ is not equal to $0$, because $m<l$. Let us consider the case $a\neq0$ (the case $b\neq0$ is dealt similarly). Since $N((P_L,c))=N((c,P_R))=m$, we obtain
	$$
	f_0^m\left((P_L,c)\right)=\left(P_L,f_0^{m-1}(1)\right)\quad\text{and}\quad f_0^m\left((c,P_R)\right)=\left(f_0^{m-1}(0),P_R\right).
	$$
	Note that $f_0^{m-1}(1)\leq P_R$, because otherwise there is a point $x\in(P_L,c)$ such that $f_0^m(x)=P_R$ and $c\notin\Orb_{f_0}(x)$, which, as previously, leads to the contradiction with the condition~\eqref{eq:orbP_L}. Similarly $f_0^{m-1}(0)\geq P_L$. 
	Hence $f_0^m(y)\in(P_L,P_R)$ for every point $y\in(P_L,c)\cup(c,P_R)$. 
	
	Observe that
	$$
	f_0^m\left((P_L,c) \right)=\left(P_L,f_0^m(c_-)\right)\ni c\quad\text{and}\quad f_0^m\left((c,P_R) \right)=\left(f_0^m(c_+),P_R\right)\ni c.
	$$
	So in particular $c\in f_0^{a\cdot m}((P_L,c))$. But the map $f_0^{l_{n-1}}=f_0^{a\cdot m+i_0}$ is continuous on the interval $(P_L,c)\subset[u_{n-1},c)$ and hence $f_0^{i_0}(c_+)=f_0^{i_0}(c_-)$. On the other hand $f_0^m(c_+)\neq f^m(c_-)$, so for some number $s<m-i_0$ we have
	$$
	c=f_0^{i_0+s}(c_+)=f_0^{i_0+s}(c_-).
	$$
	Next, let us consider the interval $(c,P_R)$. Since $k:=i_0+s<m=N((c,P_R))$, we obtain
	$$
	f_0^k\left((c,P_R)\right)=\left(c,f_0^k(P_R)\right). 
	$$
	Observe that the interval $(P_L,P_R)$ cannot contain any preimage $y$ of the point $P_R$. Namely, either $y\neq c$, so $f^m(y)\in (P_L,P_R)$
		showing that $\Orb_{f_0}(P_L)\cap(P_L,P_R)\neq \emptyset$ or $y=c$ and then $P_R=v_n$ showing that $m\geq l$. In both cases, it is a contradiction.
	 Then the condition~\eqref{eq:orbP_L} implies that $f_0^k(P_R)=P_R$, which again contradicts the definition of $m$.
	Indeed $l=m=N\left( (e_-,c)\right)=N\left( (c,e_+)\right)$.

	\eqref{thm:renorm_comp:3}: Let us denote
	$$
	j:=\max\{i\in\{1,2,\ldots,n\}:f_i=(f_{i-1}^2,f_{i-1}^2)\}.
	$$
	Then, by the assumption we have $j<n$. Moreover it follows from \eqref{thm:renorm_comp:2} that the set $J_{f_j}$ contains a periodic orbit and $R_{J_{f_j}}f=f_j$. Since for each $k=j+1,\ldots,n$ the renormalization $f_k$ is trivial, we have 
	$$
	[u_k,v_k]=[f_{k-1}(u_{k-1}),v_{k-1}]\quad\text{or}\quad[u_k,v_k]=[u_{k-1},f_{k-1}(v_{k-1})].
	$$
	In particular, we conclude from it that
	$$
	[f_{j-1}(u_{j-1}),f_{j-1}(v_{j-1})]=[u_j,v_j]\supset[u_{j+1},v_{j+1}]\supset\ldots\supset[u_n,v_n]
	$$
	and as a consequence $J_{f_j}\subset J_{f_n}$. We claim that in fact $J_{f_j}=J_{f_n}$. Indeed, repeating the reasoning from the proof of \eqref{thm:renorm_comp:1}, we obtain
	$$
	\Orb_{f_j}(x)\cap(u_n,v_n)\neq\emptyset
	$$
	for every point $x\in(u_j,v_j)$ and
 	\begin{align*}
	\Orb_{f_j}(u_j)\cap(u_n,v_n)=\emptyset&\iff u_j=0,\\
	\Orb_{f_j}(v_j)\cap(u_n,v_n)=\emptyset&\iff v_j=1.
	\end{align*}
	On the other hand, the following implications hold
	\begin{align*}
	u_j=0&\implies f_{j-1}(u_{j-1})=0\implies u_{j-1}=0,\\
	v_j=1&\implies f_{j-1}(v_{j-1})=1\implies v_{j-1}=1.
	\end{align*}
	But the existence of the renormalization $f_j=(f_{j-1}^2,f_{j-1}^2)$ implies that $u_{j-1}\neq0$ and $v_{j-1}\neq1$. Therefore the orbit $\Orb_{f_j}(x)\subset\Orb_{f_0}(x)$ of any point $x\in[u_j,v_j]$ intersects the interval $(u_n,v_n)$. This proves the equality $J_{f_j}=J_{f_n}$. Finally, note that $R_{J_{f_n}}f=R_{J_{f_j}}f=f_j\neq f_n$.
\end{proof}

Theorem~\ref{thm:renorm_comp} is based on the assumption that the renormalization $g$ can be factorized into renormalizations of the form~\eqref{eq:basic_renorm}, that we will call \textit{basic renormalizations}.

\begin{cor}\label{cor:basic_renorm}
Let $f$ be an expanding Lorenz map and $g=(f^l,f^r)$ be a renormalization of $f$ that can be factorized into basic renormalizations. Then the set $J_g$ is nonempty and defines the renormalization $g$, i.e. $R_{J_g}f=g$, if and only if $l= r$.
\end{cor}

The above corollary suggests that the case $g=(f^l,f^r)$ with $l\neq r$ should be excluded in the definition of renormalization in \cite{Cui} and \cite{Ding}. Note, however, that our result is valid only for renormalizations $g=(f^l,f^r)$ that can be factorized into basic renormalizations. This condition is not always satisfied in the case of expanding Lorenz maps $f$ that do not have a constant slope (see the main example in \cite{ChOp}).

\begin{rem}\label{rem:basic_renorm}
By the results of \cite[Subsection~6.1.2]{GS} (see also \cite{Glen}) the kneading invariant of an expanding Lorenz map with a constant slope is finitely ($0\leq n<\infty$) renormalizable with each of the $n$ renormalizations being either trivial or by words $(w_+,w_-)=(10,01)$. However, it would be interesting to find an argument independent of the kneading theory, that in the case of maps with a constant slope any renormalization can be factorized into basic renormalizations.
\end{rem}

\section*{Acknowledgements}
The authors are grateful to referee of this paper and his numerous suggestions. His comments on kneading invariant and its renormalizations gave inspiration to writing Section~\ref{sec:basic_renormalizations} and algorithm presented there.

P. Oprocha was supported by National Science Centre, Poland (NCN), grant no. 2019/35/B/ST1/02239.

Publication co-financed by the Krakow University of Economics as part of the “Conference Activity Support program -- WAK-2024”.

\end{document}